\DeclareSymbolFont{bbold}{U}{bbold}{m}{n}
\DeclareSymbolFontAlphabet{\mathbbold}{bbold}
\theoremstyle{plain}
\newtheorem{thm}{Theorem}
\newtheorem{thmAB}[thm]{Theorem}
\newtheorem{thmBZ}[thm]{Theorem}
\newtheorem{lem}[thm]{Lemma}
\theoremstyle{definition}
\newcommand{\con}{\mathsf{con}}
\newcommand{\simp}{\mathsf{sim}}
\newcommand{\non}{\mathsf{non}}
\newcommand{\h}{\mathsf{h}}
\newcommand{\tail}{\mathsf{t}}
\newcommand{\up}[1]{\textup{#1}}
\newcommand{\dsharp}{\raisebox{.01em}{\kern-.08em\scalebox{.6}{\rotatebox[origin=c]{45}{\ding{67}}}}\kern-.017em}
\newcommand{\girth}{\operatorname{girth}}
\newcommand{\be}{\mathbf{e}}
\newcommand{\bn}{\mathbf{m}}
\newcommand{\bm}{\mathbf{n}}
\newcommand{\bp}{\mathbf{p}}
\newcommand{\bq}{\mathbf{q}}
\newcommand{\bt}{\mathbf{t}}
\newcommand{\bu}{\mathbf{u}}
\newcommand{\bv}{\mathbf{v}}
\newcommand{\bw}{\mathbf{w}}
\newcommand{\bz}{\mathbf{z}}
\newcommand{\sa}{\mathsf{a}}
\newcommand{\ssb}{\mathsf{b}}
\newcommand{\ssc}{\mathsf{c}}
\newcommand{\sd}{\mathsf{d}}
\newcommand{\se}{\mathsf{e}}
\newcommand{\sh}{\mathsf{h}}
\newcommand{\st}{\mathsf{t}}
\begin{document}

\title[From $A$ to $B$ to $Z$]{From $A$ to $B$ to $Z$}

\author[Marcel Jackson and Wen Ting Zhang]{Marcel Jackson$^1$ and Wen Ting Zhang$^2$}
\address{$^{1,2}$Department of Mathematics and Statistics, La Trobe University, Victoria  3086,
Australia}\address{
$^2$School of Mathematics and Statistics, Lanzhou University, Lanzhou, Gansu 730000, PR China} \email{M.G.Jackson@latrobe.edu.au}\email{zhangwt@lzu.edu.cn}

\subjclass[2000]{Primary: 20M07; Secondary: 05C65, 08B15}
\keywords{}
\thanks{The first author was supported by ARC Future Fellowship FT120100666 and the second author by ARC Discovery Project DP1094578, the National Natural Science Foundation of China (nos.~11771191, 11401275, 11371177) and the Natural
Science Foundation of Gansu Province (no. 20JR5RA275)}

\begin{abstract}
The variety generated by the Brandt semigroup ${\bf B}_2$ can be defined within the variety generated by the semigroup ${\bf A}_2$ by the single identity $x^2y^2\approx y^2x^2$.  Edmond Lee asked whether or not the same is true for the monoids ${\bf B}_2^1$ and ${\bf A}_2^1$.  We employ an encoding of the homomorphism theory of hypergraphs to show that there is in fact a continuum of distinct subvarieties of ${\bf A}_2^1$ that satisfy $x^2y^2\approx y^2x^2$ and contain ${\bf B}_2^1$.  A further consequence is that the variety of ${\bf B}_2^1$ cannot be defined within the variety of ${\bf A}_2^1$ by any finite system of identities. Continuing downward, we then turn to subvarieties of ${\bf B}_2^1$.  We resolve part of a further question of Lee by showing that there is a continuum of distinct subvarieties all satisfying the stronger identity $x^2y\approx yx^2$ and containing the monoid $M(\bz_\infty)$, where $\bz_\infty$ denotes the infinite limit of the Zimin words $\bz_0=x_0$, $\bz_{n+1}=\bz_n x_{n+1}\bz_n$.
\end{abstract}

\maketitle

The six element Brandt monoid ${\bf B}_2^1$ can be represented as a matrix semigroup
\[
\begin{tabular}{cccccc}
$\left(\begin{matrix} 0&0\\0&0\end{matrix}\right)$
&
$\left(\begin{matrix} 1&0\\0&1\end{matrix}\right)$
&
$\left(\begin{matrix} 0&1\\0&0\end{matrix}\right)$
&
$\left(\begin{matrix} 0&0\\1&0\end{matrix}\right)$
&
$\left(\begin{matrix} 1&0\\0&0\end{matrix}\right)$
&
$\left(\begin{matrix} 0&0\\0&1\end{matrix}\right)$
\\
\rule{0cm}{.5cm}$0$&$1$&$\sa$&$\ssb$&$\sa\ssb$&$\ssb\sa$
\end{tabular}
\]
or as a monoid with presentation $\langle \sa,\ssb\mid \sa\ssb\sa=\sa, \ssb\sa\ssb=\ssb,\sa\sa=\ssb\ssb=0\rangle$.  This Brandt monoid is perhaps the most ubiquitous harbinger of complex behaviour in all finite semigroups.  It has no finite basis for its identities, and not only was the first known example \cite{per}, but is the equal smallest generator for a semigroup variety with this property \cite{leezha}.  More than being just nonfinitely based, it is \emph{inherently nonfinitely based} in the sense that every locally finite variety containing it is without a finite identity basis \cite{sap}.  It is also the equal smallest generator with this property, and moreover a regular semigroup is inherently nonfinitely based if and only if it generates a variety containing ${\bf B}_2^1$ \cite{jac:INFB}.  It is the smallest known semigroup with co-\texttt{NP}-complete identity checking problem \cite{kli,sei}, the only known nonfinitely related semigroup~\cite{may}, and the smallest known (and smallest possible) semigroup with \texttt{NP}-hard membership problem for its variety \cite{jck}.  It is the smallest known generator for a semigroup variety with continuum many semigroup subvarieties \cite{ELL,jac00} and for a monoid variety with continuum many monoid subvarieties \cite{jaclee}.

The semigroup ${\bf A}_2^1$ is the slightly less glamorous sibling of ${\bf B}_2^1$.  It can be represented as the  matrix semigroup
\[
\begin{tabular}{cccccc}
$\left(\begin{matrix} 0&0\\0&0\end{matrix}\right)$
&
$\left(\begin{matrix} 1&0\\0&1\end{matrix}\right)$
&
$\left(\begin{matrix} 0&1\\0&0\end{matrix}\right)$
&
$\left(\begin{matrix} 1&0\\1&0\end{matrix}\right)$
&
$\left(\begin{matrix} 1&0\\0&0\end{matrix}\right)$
&
$\left(\begin{matrix} 0&1\\0&1\end{matrix}\right)$
\\
\rule{0cm}{.5cm}$0$&$1$&$\ssc$&$\sd$&$\ssc\sd$&$\sd\ssc$
\end{tabular}
\]
or as the monoid with presentation $\langle \ssc,\sd\mid \ssc\sd\ssc=\ssc, \sd\ssc\sd=\sd,\ssc\ssc=0,\sd\sd=\sd\rangle$.
While~${\bf A}_2^1$ plays a similarly critical role in the structural complexity of semigroups, it generates a variety properly containing that generated by ${\bf B}_2^1$, and so some of its bad properties are simply inherited from ${\bf B}_2^1$.
On other properties it is better behaved: identity checking on ${\bf A}_2^1$ can be performed in polynomial time for example~\cite{kli}.  Nevertheless, being so small gives it some undeniable charisma: for example, a finite regular semigroup is inherently nonfinitely based if and only if either ${\bf B}_2^1$ or~${\bf A}_2^1$ divides it~\cite{jac:INFB}.

The third character in our story is the mysterious semigroup $S(\bz_\infty)$ and its monoid variant  $M(\bz_\infty)$.  The word~$\bz_\infty$ is the right infinite limit of the following sequence: $\bz_0:=x_0$, $\bz_{n+1}:=\bz_nx_{n+1}\bz_n$, where $x_0,x_1,\dots$ is an infinite list of pairwise distinct letters.  The semigroup $S(\bz_\infty)$ has as its elements, a zero element along with all finite subwords of $\bz_\infty$, with the product $\bu\cdot\bv:=\bu\bv$, if $\bu\bv$ is a subword of $\bz_\infty$, and $0$ otherwise.  Evidently $S(\bz_\infty)$ is infinite, so in terms of criticality in the theory of finite semigroups might appear to be at a disadvantage when compared with ${\bf B}_2^1$ and ${\bf A}_2^1$.  Yet $S(\bz_\infty)$ generates a semigroup variety that lies within the variety of many finite semigroups, and moreover uniquely determines at least one major finiteness condition in finite semigroups: a finite semigroup is inherently nonfinitely based if and only if it contains the semigroup $S(\bz_\infty)$ in its variety \cite{sap}.  In fact, $S(\bz_\infty)$ generates the unique minimal inherently nonfinitely based semigroup variety amongst those varieties not containing inherently nonfinitely based groups (the existence of these remains unresolved) \cite{sap2}.  In particular though, $S(\bz_\infty)$ generates a proper subvariety of the semigroup variety $\mathbb{V}_s({\bf B}_2^1)$, and hence also of $\mathbb{V}_s({\bf A}_2^1)$.

Throughout, we will use $[\![\Sigma]\!]$ to denote the variety defined by an identity or set of identities $\Sigma$, which will be either a monoid variety or semigroup variety depending on the context (typically a monoid variety). We will use the notation $\mathbb{V}_m({\bf M})$ to denote the monoid variety generated by a monoid ${\bf M}$ though we use the notation $\mathbb{B}_2^1$ and $\mathbb{A}_2^1$ for the specific varieties $\mathbb{V}_m(\mathbf{B}_2^1)$ and $\mathbb{V}_m(\mathbf{A}_2^1)$ respectively.  We let $\mathbb{V}_s({\bf S})$ denote the semigroup variety generated by a semigroup ${\bf S}$, noting that in the case of a monoid ${\bf M}$, many of the aforementioned properties hold equally for $\mathbb{V}_m({\bf M})$ and $\mathbb{V}_s({\bf M})$, with the exception of variety lattices.  The lattice of semigroup subvarieties of $\mathbb{V}_s({\bf M})$  can in some cases be enormously larger than the lattice of monoid subvarieties of $\mathbb{V}_m({\bf M})$~\cite[Lemma 4.4]{jac05}, and it is substantially more difficult to establish complex properties of variety lattices in the monoid setting in comparison to the semigroup setting; \cite{guslee,gusver,jaclee}.  The current article will establish all results in the monoid setting, with the corresponding results for semigroup varieties being immediate consequences.

In the absence of an identity element, the five element semigroups ${\bf B}_2$ and ${\bf A}_2$ are also important critical objects in the study of finite semigroups, where they relate to the possible structure of regular $\mathscr{D}$-classes.  They have finitely based equational theories, and moreover, the semigroup variety generated by ${\bf B}_2$ is defined within that generated by ${\bf A}_2$ by the single extra identity $x^2y^2\approx y^2x^2$; see Trahtman \cite{tra} or Lee and Volkov \cite[Remark~3.1]{leevol}.  This and other evidence led Edmond Lee \cite[Question 7.2, law 7.1]{lee} to ask whether the same law defines the monoid variety~$\mathbb{B}_2^1$ within the monoid variety $\mathbb{A}_2^1$.  A further question is \cite[Question 7.3]{lee}, which asks whether every monoid variety in the interval $[\mathbb{B}_2^1,\mathbb{A}_2^1]$ can be defined by a finite system of identities.    This second problem additionally arises in the work of the first author \cite{jck}, where it is shown that for any finite completely 0-simple semigroup $C$, the pseudovariety generated by the monoid $C^1$ has \texttt{NP}-hard membership problem provided it contains ${\bf B}_2^1$ but not ${\bf A}_2^1$.  If $\mathbb{B}_2^1$ can be defined within $\mathbb{A}_2^1$ by a finite system of identities, then these intractability results would push even higher.  The first main result of the present article is the following theorem which shows that the answer to all of these questions is negative. 
\begin{thmAB}[From A to B]\label{thm:AB}
\begin{enumerate}
\item There are continuum many monoid varieties in the interval $[\mathbb{B}_2^1,[\![x^2y^2\approx y^2x^2]\!]\wedge \mathbb{A}_2^1]$.
\item The monoid variety $\mathbb{B}_2^1$ is not defined within $\mathbb{A}_2^1$ by any finite system of identities.
\end{enumerate}
\end{thmAB}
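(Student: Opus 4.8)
The plan is to use an encoding of hypergraph homomorphism theory into words, following the strategy that has been productive for producing continuum-sized families of monoid varieties. The key object will be a family of monoids (or a family of sets of identities) indexed by hypergraphs $H$, each lying in the interval $[\mathbb{B}_2^1, [\![x^2y^2\approx y^2x^2]\!]\wedge\mathbb{A}_2^1]$, and so arranged that $H\mapsto \mathbb{V}_m(\text{stuff}_H)$ is injective — or at least that the assignment detects enough of the homomorphism order on hypergraphs to yield an antichain of continuum size. Since the class of finite hypergraphs carries antichains of size continuum under the homomorphism quasi-order (indeed one can take an infinite antichain of finite connected hypergraphs and form a continuum-sized family of up-sets or of infinite ``disjoint unions''), part (1) will follow once the encoding is shown to be faithful on the relevant order.

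Concretely, first I would fix a way to read a hypergraph $H$ off a word or a set of words over an alphabet that includes the letters $\ssc,\sd$ (or rather their images), using the squares $x^2$ as ``vertex tokens'' and products of distinct squares to record incidences, so that the identity $x^2y^2\approx y^2x^2$ is automatically available (it must be, since we are inside $[\![x^2y^2\approx y^2x^2]\!]$) while the non-commutation of single letters — which is what distinguishes $\mathbb{A}_2^1$ from $\mathbb{B}_2^1$ — is what carries the combinatorial content. The point of working just below $\mathbb{A}_2^1$ rather than inside $\mathbb{B}_2^1$ is precisely that $\mathbb{A}_2^1$ retains enough ``order'' information (from the $\sd\sd=\sd$ idempotent behaving differently from $\ssc\ssc=0$) to distinguish the encoded hypergraphs, whereas in $\mathbb{B}_2^1$ everything squares to zero and the encoding collapses. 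Next I would verify the two containments: every monoid in the family satisfies $x^2y^2\approx y^2x^2$ and lies in $\mathbb{A}_2^1$ (this is a direct check that the defining words/identities are consequences of, or are satisfied in, $\mathbb{A}_2^1$), and every such monoid contains $\mathbb{B}_2^1$ (this is the ``lower bound'' — one exhibits $\mathbf{B}_2^1$ as a divisor, or shows the identities of the family do not imply any identity failing in $\mathbf{B}_2^1$). Then the heart of the argument is a word-combinatorial lemma: for hypergraphs $H_1, H_2$, the variety attached to $H_1$ is contained in that attached to $H_2$ if and only if there is a homomorphism $H_2\to H_1$ (or some such correspondence), proved by deriving, from a failure of homomorphism, an identity that separates the two varieties, and conversely showing a homomorphism transports identities the right way. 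Part (2) is then a soft corollary: a finite basis for $\mathbb{B}_2^1$ within $\mathbb{A}_2^1$ would make the interval $[\mathbb{B}_2^1,\mathbb{A}_2^1]$ have a least element covering-style structure incompatible with an infinite descending/antichain behaviour — more precisely, if $x^2y^2\approx y^2x^2$ together with finitely many further identities defined $\mathbb{B}_2^1$ inside $\mathbb{A}_2^1$, then (since $\mathbb{B}_2^1\subseteq[\![x^2y^2\approx y^2x^2]\!]\wedge\mathbb{A}_2^1$ and the latter has continuum many subvarieties above $\mathbb{B}_2^1$) one derives that $\mathbb{B}_2^1$ is the bottom of an interval that cannot be finitely axiomatised relative to $\mathbb{A}_2^1$; the cleanest route is to show directly that any finite set of identities holding in $\mathbf{B}_2^1$ but examined relative to $\mathbb{A}_2^1$ still holds in one of the strictly larger varieties in the family, so it cannot define $\mathbb{B}_2^1$.

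The main obstacle I anticipate is the faithfulness lemma — establishing that distinct hypergraphs (up to homomorphic equivalence) really do give distinct varieties. This requires a tight analysis of which words are ``deducible'' from the defining identities modulo the equational theory of $\mathbb{A}_2^1$, and in particular a normal-form or ``stable word'' description of the relatively free monoid, so that one can recognise exactly when an identity $\bu\approx\bv$ holds. Controlling deductions in the presence of the idempotent $\sd$ (which allows $\sd\approx\sd\sd$, inserting and deleting certain squared blocks) while keeping track of enough structure to recover the hypergraph is delicate: one must choose the encoding so that the permitted rewritings correspond exactly to hypergraph isomorphisms on the encoded part and to ``harmless'' padding elsewhere. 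A secondary technical point is packaging the continuum-many varieties: rather than using all finite hypergraphs, it is cleanest to fix one infinite antichain $\{H_i\}_{i\in\NN}$ of finite hypergraphs (e.g. suitable cycles or a Nešetřil–Rödl-style family) and, for each subset $I\subseteq\NN$, take the variety generated by $\{$monoid$(H_i):i\in I\}$; faithfulness of $i\mapsto$ variety on the antichain, plus the fact that joins behave well, then yields $2^{\aleph_0}$ distinct varieties, all automatically sandwiched in the required interval once the two containments above are checked for each generator.
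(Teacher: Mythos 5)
Your outline matches the paper's general strategy (monoids built from a family of hypergraphs, two containment checks, a separating identity per hypergraph, subsets of an infinite family giving $2^{\aleph_0}$ varieties), but as it stands it has genuine gaps at exactly the points where the real work lies. First, there is no actual construction: ``squares as vertex tokens'' is not enough to check either containment, and the upper containment $M_\mathcal{H}\in\mathbb{A}_2^1$ is a genuinely nontrivial step (in the paper it is an explicit embedding of a quotient construction into a power of $\mathbf{A}_2^1$). Second, and more seriously, your proposed key lemma --- variety containment corresponds exactly to hypergraph homomorphism, so that any homomorphism antichain (cycles, a Ne\v{s}et\v{r}il--R\"odl family) yields the result --- is precisely what fails in the monoid setting. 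The paper notes that the clean ``failure of the identity iff homomorphism'' statement is available only for the semigroup construction; for monoids the separation lemma needs \emph{wildly incomparable} pairs, i.e.\ hypergraphs where one has girth exceeding roughly three times the number of vertices of the other and chromatic number exceeding the number of small vertex subsets of the other, and such families exist only thanks to the Erd\H{o}s--Hajnal theorem on simultaneous high girth and high chromatic number. An arbitrary homomorphism antichain gives you no handle on the case analysis needed to show that an assignment into $M_\mathcal{H}$ forces the candidate identity to hold, so faithfulness ``on the homomorphism order'' is both unsupported and more than is true.

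Third, part (2) does not follow from part (1) by any interval-structure argument: having continuum many varieties between $\mathbb{B}_2^1$ and $[\![x^2y^2\approx y^2x^2]\!]\wedge\mathbb{A}_2^1$ is compatible, a priori, with $\mathbb{B}_2^1$ being defined inside $\mathbb{A}_2^1$ by finitely many identities. Your fallback (``show any finite set of identities true in $\mathbf{B}_2^1$ still holds in a strictly larger member of the family'') is the right statement to prove, but you give no mechanism. The paper's mechanism is a locality argument that is absent from your plan: taking $\mathcal{H}$ of chromatic number at least $3$ but girth larger than $3\binom{6n}{2}$, every submonoid of $M_\mathcal{H}$ generated by at most $n$ elements only ``sees'' a sub-hypergraph that is a hyperforest, and hyperforests admit enough majority $2$-colourings to force such submonoids into $\mathbb{B}_2^1$; hence $M_\mathcal{H}$ satisfies every identity of $\mathbf{B}_2^1$ in at most $n$ variables while lying in $\mathbb{A}_2^1\setminus\mathbb{B}_2^1$. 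Without this girth-controlled flexibility (and the colouring analysis behind it), neither the separation lemma nor part (2) can be completed from your sketch.
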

This theorem, and the structures used for its proof, corroborate the suggestion in \cite[pp.~368]{lee} that it is ``extremely difficult, if not impossible, to identify all varieties in the interval $[\mathbb{B}_2^1,\mathbb{A}_2^1]$''.  The complexity of the problem of deciding membership for finite semigroups in $\mathbb{A}_2^1$ remains a very interesting unresolved problem \cite[Problem~7.5]{jck}.

We then turn our attention to the interval between $\mathbb{V}_s(S(\bz_\infty))$ and $\mathbb{V}_s({\bf B}_2^1)$ and its monoid analogue.  This interval of the lattice of semigroup/monoid varieties is particularly intriguing.  While the finitely generated varieties above $\mathbb{V}_s(S(\bz_\infty))$ are exactly the inherently nonfinitely based ones, the variety $\mathbb{V}_s({\bf B}_2^1)$ is nevertheless a minimal example amongst those that are \emph{finitely generated} \cite[Theorem 2]{sap2}.  There is certainly at least one distinct variety between $\mathbb{V}_s(S(\bz_\infty))$ and $\mathbb{V}_s({\bf B}_2^1)$, because the monoid $M(\bz_\infty)$ obtained from $S(\bz_\infty)$ by adjoining an identity element also lies in the variety generated by any finite inherently nonfinitely based semigroup.  The monoid $M(\bz_\infty)$ fails laws such as $xyz^2\approx yxz^2$, which holds on $S(\bz_\infty)$, so it generates a strictly larger semigroup variety.  A third question of Edmond Lee \cite[Question 7.5]{lee} concerns the possible strictness of the inclusions $\mathbb{M}(\bz_\infty)\subseteq \mathbb{B}_2^1\wedge [\![x^2y\approx yx^2]\!]\subseteq \mathbb{A}_2^1\wedge [\![x^2y\approx yx^2]\!]$, where $\mathbb{M}(\bz_\infty)$ denotes $\mathbb{V}_m(M(\bz_\infty))$.  The second main result of this article is the following theorem, which strongly resolves the first inclusion in \cite[Question 7.5]{lee}.
\begin{thmBZ}[From B to Z]\label{thm:BZ}
\begin{enumerate}
\item There are continuum many monoid varieties in the interval $[\mathbb{M}(\bz_\infty),[\![x^2y\approx yx^2]\!]\wedge \mathbb{B}_2^1]$.
\item The monoid variety $\mathbb{M}(\bz_\infty)$ is not defined within $\mathbb{B}_2^1$ by any finite system of identities.
\end{enumerate}
\end{thmBZ}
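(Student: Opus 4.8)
The plan is to adapt, one level further down, the hypergraph encoding announced in the abstract and used for Theorem~\ref{thm:AB}. To each finite hypergraph $H$ in a suitable class --- connected hypergraphs that are \emph{cores} will suffice --- one associates a word $\bw_H$ obtained by decorating the Zimin words $\bz_0,\bz_1,\dots$ with auxiliary letters that record the incidence structure of $H$, and one forms the word monoid $M(\bw_H)$: the Rees quotient of the free monoid on the relevant alphabet by the ideal of words that are not factors of $\bw_H$. (Here $\bw_H$ is taken to be a single right-infinite decorated Zimin word, so $M(\bw_H)$ is an infinite monoid; equivalently one can work with the directed family of finite word monoids of the decorated finite words $\bz_n$.) The decoration must be engineered so that, first, $\bw_H$ still contains every finite Zimin word $\bz_n$ as a factor, whence each $M(\bz_n)$ is a Rees quotient of $M(\bw_H)$ and, using the fact that $\mathbb{M}(\bz_\infty)=\bigvee_n\mathbb{V}_m(M(\bz_n))$ (the monoid $M(\bz_\infty)$ being locally finite with every finitely generated submonoid lying in some $\mathbb{V}_m(M(\bz_n))$), we obtain $\mathbb{M}(\bz_\infty)\subseteq\mathbb{V}_m(M(\bw_H))$; and second, $\bw_H$ remains square-free and every letter behaves multiplicatively as it would in $\mathbf{B}_2^1$, which forces $M(\bw_H)$ to satisfy both $x^2y\approx yx^2$ and every identity of $\mathbb{B}_2^1$, i.e. $\mathbb{V}_m(M(\bw_H))\subseteq[\![x^2y\approx yx^2]\!]\wedge\mathbb{B}_2^1$. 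Thus every variety $\mathbb{V}_m(M(\bw_H))$ lies in the target interval, and it remains to show that these varieties are plentiful and finely layered.

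The heart of the argument is a \emph{dictionary lemma} translating the subvariety order into the hypergraph homomorphism quasi-order: for $H,H'$ in the encoding class, $\mathbb{V}_m(M(\bw_{H'}))\subseteq\mathbb{V}_m(M(\bw_H))$ if and only if there is a hypergraph homomorphism $H'\to H$, and more generally $M(\bw_{H'})\in\mathbb{V}_m\{M(\bw_{H_i}):i\in I\}$ if and only if $H'\to H_i$ for some $i$ --- the latter reducing to the former because a finite monoid in a join of varieties already lies in the variety generated by finitely many of them, and a connected hypergraph that maps into a disjoint union maps into a single component. The forward implication is the routine half: a homomorphism $H'\to H$ induces a letter substitution carrying every identity valid on $M(\bw_{H'})$ to one valid on $M(\bw_H)$. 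The reverse implication is where all the work lies: one isolates a distinguished identity $\phi_{H'}$ recording the combinatorial skeleton of $\bw_{H'}$, analyses the ways in which it can be satisfied in a subdirect power of the $M(\bw_{H_i})$ sitting above $\mathbb{M}(\bz_\infty)$, and extracts from any such satisfaction a colouring of the incidences of $H'$ by those of some $H_i$, that is, a homomorphism $H'\to H_i$. I expect this reverse implication --- together with verifying that the encoding genuinely stays inside $\mathbb{B}_2^1\wedge[\![x^2y\approx yx^2]\!]$ rather than leaking into a larger variety --- to be the main obstacle.

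Granting the dictionary, part~(1) of Theorem~\ref{thm:BZ} is immediate: fix a countably infinite antichain $\{H_n:n\in\NN\}$ of connected hypergraph cores, pairwise non-homomorphic (such families are abundant --- for instance hypergraph analogues of graphs of simultaneously large girth and large chromatic number), and for $S\subseteq\NN$ set $\mathbb{W}_S:=\mathbb{M}(\bz_\infty)\vee\bigvee_{n\in S}\mathbb{V}_m(M(\bw_{H_n}))$; this lies in $[\mathbb{M}(\bz_\infty),[\![x^2y\approx yx^2]\!]\wedge\mathbb{B}_2^1]$, and if $n\in S\setminus T$ then $M(\bw_{H_n})\in\mathbb{W}_S$ whereas $M(\bw_{H_n})\notin\mathbb{W}_T$ (otherwise $H_n\to H_m$ for some $m\in T$, contradicting the antichain), so $S\mapsto\mathbb{W}_S$ is injective and the interval has continuum many members. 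For part~(2), one refines the family $\{H_n\}$ so that it is in addition cofinal for the encoding, orders the associated identities as $\phi_{H_1},\phi_{H_2},\dots$, and puts $\mathbb{U}_k:=\mathbb{B}_2^1\wedge[\![x^2y\approx yx^2]\!]\wedge[\![\phi_{H_1},\dots,\phi_{H_k}]\!]$; using the dictionary with $M(\bw_{H_{k+1}})$ as separating witness, $\mathbb{U}_1\supsetneq\mathbb{U}_2\supsetneq\cdots$ is a strictly descending chain of varieties each finitely based relative to $\mathbb{B}_2^1$, with $\bigcap_k\mathbb{U}_k=\mathbb{M}(\bz_\infty)$. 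If some finite system $\Sigma$ defined $\mathbb{M}(\bz_\infty)$ within $\mathbb{B}_2^1$, then $\Sigma$, being finite and valid on $\mathbb{M}(\bz_\infty)=\bigcap_k\mathbb{U}_k$, would follow modulo $\mathbb{B}_2^1$ from $\{x^2y\approx yx^2,\phi_{H_1},\dots,\phi_{H_k}\}$ for some $k$ by compactness, forcing $\mathbb{U}_k\subseteq\mathbb{M}(\bz_\infty)$ and contradicting strictness; this proves part~(2).
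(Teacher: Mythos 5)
Your proposal is a programme rather than a proof: the two load-bearing ingredients --- the decorated-Zimin encoding $H\mapsto \bw_H$ and the ``dictionary lemma'' translating subvariety containment into hypergraph homomorphism --- are never constructed or proved, and you yourself flag the reverse implication of the dictionary as the main obstacle. The gaps are not cosmetic. First, square-freeness of $\bw_H$ does give $x^2y\approx yx^2$ in the word monoid, but it does nothing towards membership in $\mathbb{B}_2^1$: containment of a word monoid in $\mathbb{V}_m(\mathbf{B}_2^1)$ is a delicate property of the word, and ``engineered so that every letter behaves as in $\mathbf{B}_2^1$'' is precisely what would have to be defined and verified. Second, the bridging principle ``a finite monoid in a join of varieties already lies in the variety generated by finitely many of them'' is false in general --- the join of the monoid varieties generated by the cyclic groups of prime order $p$ is the variety of commutative monoids, which contains the two-element semilattice, while every finite sub-join satisfies an identity $x^N\approx 1$ that the semilattice fails --- and in any case your $M(\bw_{H'})$ is infinite, so even a correct finite-algebra principle would not apply; thus the separation $M(\bw_{H_n})\notin\mathbb{W}_T$ in part (1) is unsupported. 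Third, part (2) hinges on $\bigcap_k\mathbb{U}_k=\mathbb{M}(\bz_\infty)$, which asserts that your identities $\phi_{H_i}$ form an identity basis for $\mathbb{M}(\bz_\infty)$ relative to $\mathbb{B}_2^1\wedge[\![x^2y\approx yx^2]\!]$; this completeness statement does not follow from the dictionary lemma or from ``cofinality of the encoding'' and is nowhere argued, and without it the compactness step has nothing to bite on (strictness of the chain likewise rests on the unproven dictionary).

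For contrast, the paper's proof of this theorem uses no hypergraphs and no generating monoids for the intermediate varieties. It takes the explicit words $\bw_n=x_1yx_2z\dots x_nyx_1zx_2y\dots yx_n$ (odd $n$) and the identities $\bw_n\approx\bw_n'$ obtained by exchanging $y$ and $z$: these hold in $\mathbb{M}(\bz_\infty)$ because $\bw_n$ has no simple letters, while $\bw_n$ is an isoterm for $\mathbb{B}_2^1\wedge[\![x^2y\approx yx^2]\!]$. Continuum many varieties arise as the models of $\Sigma\cup\{\bw_i\approx\bw_i'\mid i\in S\}$, separated by a purely syntactic analysis showing that $\bw_n\approx\bw_n'$ is deducible only when $n\in S$; the non-finite-axiomatizability in part (2) follows from isoterm lemmas for proper subwords of $\bw_n$ and for $\bw_n$ with a factor collapsed to a linear letter, which force the first nontautological step of any deduction from a bounded-variable basis to be trivial. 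If you wish to pursue your route, the encoding and both directions of the dictionary would have to be carried out in full; that is where essentially all of the work lies.
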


While continuum is the largest possible size for an interval in a subvariety lattice, we feel that in both Theorem \ref{thm:AB} and Theorem \ref{thm:BZ}, the proofs and constructions are very subtle.  The intermediate varieties we construct between $\mathbb{A}_2^1\wedge [\![x^2y^2\approx y^2x^2]\!]$ and $\mathbb{B}_2^1$ are generated by semigroups constructed by way of complicated combinatorial objects, constructed by Erd\H{os} and Hajnal using probabilistic methods.  The varieties we construct between $\mathbb{B}_2^1$ and $\mathbb{M}(\bz_\infty)$ have less opaque origins, yet a large number of failed attempts by the authors initially made speculations about the possibility equality of  $\mathbb{M}(\bz_\infty)$ and $[\![x^2y\approx yx^2]\!]\wedge \mathbb{B}_2^1$ seem plausible, even if it these were eventually found to be distinct.

\section{Preliminaries: hypergraphs}
A \emph{hypergraph} $\mathcal{H}$ is a pair $(V,E)$, where $V$ is a set and $E$ is a set of nonempty subsets of $V$.   For $k\geq 1$, the hypergraph $\mathcal{H}$ is \emph{$k$-uniform} if the elements of $E$ all have exactly $k$ elements.
The class of $2$-uniform hypergraphs coincides with the class of simple graphs, and many concepts from simple graphs extend to higher values of $k$ in obvious ways.  In particular, we say that a point $v\in V$ is \emph{isolated} if it does not lie in any hyperedge, and that a $n$-cycle is a sequence $v_0,e_0,v_1,e_1,\dots,v_{n-1},e_{n-1}$ alternating between vertices $v_0,\dots,v_{n-1}$ and hyperedges $e_0,\dots,e_{n-1}$, with no repeats and such that $v_{i+1}\in e_i\cap e_{i+1}$ (with addition in the subscript taken modulo~$n$).

The \emph{girth} of a hypergraph $\mathcal{H}$, denoted $\girth(\mathcal{H})$ is the length of the shortest cycle in $\mathcal{H}$.
We consider $3$-uniform hypergraphs with no isolated points and of girth at least $4$.  The following easy lemma recasts these girth assumptions in an equivalent form aimed at the needs of our construction.
\begin{lem}\label{lem:girth}
Let $\mathcal{H}=(V,E)$ be a $3$-uniform hypergraph.  Condition \up{(I)} below is equivalent to $\girth(\mathcal{H})\geq 3$, while conditions \up{(I)} and \up{(II)} together are equivalent to $\girth(\mathcal{H})\geq 4$.
\begin{itemize}
\item[(I)] every two-element subset $\{u,v\}\subseteq V$ is a subset of at most one hyperedge.
\item[(II)] if all two-element subsets of a triple $\{u,v,w\}\subseteq V$ extend to hyperedges in $E$, then $\{u,v,w\}$ is a hyperedge.
\end{itemize}
Moreover, conditions \up{(I)} and \up{(II)} imply the following condition\up:
\begin{itemize}
\item[(III)] Every $4$-set $\{v_1,v_2,v_3,v_4\}$ of vertices contains a two-element subset not extending to a hyperedge of $\mathcal{H}$.
\end{itemize}
\end{lem}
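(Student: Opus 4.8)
The plan is to convert each girth hypothesis into a statement forbidding short cycles and then to match those directly with conditions \up{(I)} and \up{(II)}. Since a cycle has length at least $2$, the condition $\girth(\mathcal{H})\ge 3$ says precisely that $\mathcal{H}$ has no $2$-cycle, and $\girth(\mathcal{H})\ge 4$ says precisely that $\mathcal{H}$ has no $2$-cycle and no $3$-cycle.

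First I would unwind the definition of a $2$-cycle. Reading the incidence condition $v_{i+1}\in e_i\cap e_{i+1}$ modulo $2$, a $2$-cycle $v_0,e_0,v_1,e_1$ is exactly a pair of distinct vertices $v_0,v_1$ together with two distinct hyperedges $e_0,e_1$, both containing $\{v_0,v_1\}$. Hence $\mathcal{H}$ has a $2$-cycle if and only if some two-element subset of $V$ is contained in at least two distinct hyperedges, which is exactly the negation of \up{(I)}. This yields the first equivalence.

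Next, working under the assumption of \up{(I)} (equivalently, no $2$-cycle), I would show that ``$\mathcal{H}$ has no $3$-cycle'' is equivalent to \up{(II)}. For one direction, suppose a $3$-cycle $v_0,e_0,v_1,e_1,v_2,e_2$ exists; then $\{v_0,v_1\}\subseteq e_0$, $\{v_1,v_2\}\subseteq e_1$ and $\{v_0,v_2\}\subseteq e_2$, so every two-element subset of the triple $\{v_0,v_1,v_2\}$ extends to a hyperedge, and \up{(II)} makes $f:=\{v_0,v_1,v_2\}$ a hyperedge. Since $f$ and $e_0$ both contain $\{v_0,v_1\}$, condition \up{(I)} forces $e_0=f$, and likewise $e_1=f$ and $e_2=f$, contradicting $e_0\ne e_1$; so \up{(I)} and \up{(II)} rule out $3$-cycles. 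Conversely, if \up{(II)} fails there is a triple $\{u,v,w\}$ with $\{u,v\}\subseteq e_{uv}$, $\{v,w\}\subseteq e_{vw}$, $\{u,w\}\subseteq e_{uw}$ for some hyperedges $e_{uv},e_{vw},e_{uw}$ while $\{u,v,w\}\notin E$; these three hyperedges are pairwise distinct, since if two of them coincided then $3$-uniformity would force that common hyperedge to equal $\{u,v,w\}$, contradicting $\{u,v,w\}\notin E$; and then $u,e_{uv},v,e_{vw},w,e_{uw}$ is a $3$-cycle. Combining the pieces: $\girth(\mathcal{H})\ge 4$ iff there is no $2$-cycle and no $3$-cycle iff \up{(I)} holds and there is no $3$-cycle, and under \up{(I)} the absence of a $3$-cycle is equivalent to \up{(II)} by the two implications just established.

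Finally, for \up{(III)}, assume \up{(I)} and \up{(II)} and suppose for contradiction that some $4$-set $\{v_1,v_2,v_3,v_4\}$ has all six of its pairs extending to hyperedges. Applying \up{(II)} to each of its four triples shows that all four of $\{v_1,v_2,v_3\}$, $\{v_1,v_2,v_4\}$, $\{v_1,v_3,v_4\}$, $\{v_2,v_3,v_4\}$ lie in $E$; but $\{v_1,v_2,v_3\}$ and $\{v_1,v_2,v_4\}$ are distinct hyperedges both containing the pair $\{v_1,v_2\}$, contradicting \up{(I)}. The argument is entirely elementary; the only mild points of care will be tracking the ``no repeats'' clauses in the definition of a cycle and the repeated appeal to $3$-uniformity to pin a hyperedge down once it is known to contain a given $3$-set, so I do not expect a genuine obstacle here, consistent with the authors' billing of this as an easy lemma.
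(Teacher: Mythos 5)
Your proposal is correct and follows essentially the same approach as the paper: identify a $2$-cycle with a pair in at least two hyperedges to get (I), use (I) and (II) to kill $3$-cycles directly, show failure of (II) produces a $3$-cycle (you phrase this contrapositively while the paper phrases it directly via a "must repeat" argument, but the content is identical), and derive (III) from the clash between (I) and two triples sharing a pair. The only minor stylistic difference is that you explicitly invoke $3$-uniformity to show the three hyperedges $e_{uv},e_{vw},e_{uw}$ are pairwise distinct, which makes the "no $3$-cycle $\Rightarrow$ (II)" step a little more transparent than the paper's wording.
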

\begin{proof}
It is easy to see that a 2-cycle corresponds to a pair of hyperedges $\{u,v,w\}$ and $\{u,v,w'\}$ in $E$ with $w\neq w'$, which is identical to what it means to fail (I).  Thus (I) is equivalent to having no $2$-cycles.

Now assume $(V,E)$ has no $3$-cycles and assume that all two-element subsets of $\{u,v,w\}\subseteq V$ extend to hyperedges.  So, there are $u',v',w'$ such that
\[
\{u,v,w'\},\{u,v',w\},\{u',v,w\}\in E.
\]  As $(V,E)$ avoids  $3$-cycles, and $u,v,w$ avoids repeats, the sequence
\[
u,\{u,v,w'\},v,\{u',v,w\},w,\{u,v',w\}
\] must repeat in its hyperedges. This then gives one of $u=u'$, $v=v'$ or $w=w'$ and then that $\{u,v,w\}\in E$, so that (II) holds.

The converse needs (I) (which we already know forces there to be no 2-cycles).  Assume that (I) and (II) hold and consider, for contradiction, a $3$-cycle $u,e_1,v,e_2,w,e_3$.  Now, $\{u,v\}\subseteq e_1$, $\{v,w\}\subset e_2$ and $\{u,w\}\subseteq e_3$, so that $\{u,v,w\}$ is a hyperedge by~(II).  But then condition (I) forces $e_1=e_2=e_3=\{u,v,w\}$, contradicting the fact that $e_1,e_2,e_3$ are pairwise distinct.

Finally, let us assume that (I) and (II) hold and consider a $4$-element subset $\{v_1,v_2,v_3,v_4\}$ of $V$.  By (I) we cannot have both $\{v_1,v_2,v_3\}$ and $\{v_1,v_2,v_4\}$ hyperedges.  But then the contrapositive of (II) implies that there is a $2$-element subset of $\{v_1,v_2,v_3\}$ or $\{v_1,v_2,v_4\}$ (and in particular, of $\{v_1,v_2,v_3,v_4\}$) that does not extend to a hyperedge.  Thus (III) holds.
\end{proof}

\section{From $\mathbb{A}_2^1$ to $\mathbb{B}_2^1$ via $x^2y^2\approx y^2x^2$}
In this section we begin our preparations for the proof of Theorem \ref{thm:AB}.  We construct a countable family $\{M_i\mid i\in \mathbb{N}\}$ of monoids based over hypergraphs and use these to prove the result by way of the following proof structure.
\begin{enumerate}
\item ${\bf B}_2^1$ embeds into each $M_i$ (Lemma \ref{lem:B21in}); no $M_i$ lies within $\mathbb{B}_2^1$ (Lemma \ref{lem:B21}).
\item Each $M_i$ satisfies $x^2y^2\approx y^2x^2$ (Lemma \ref{lem:commute}) and lies within $\mathbb{A}_2^1$ (Lemma \ref{lem:A21}).
\item For each $i$ we find an identity ${\bf p}_i\approx {\bf q}_i$ such that $M_j\models {\bf p}_i\approx {\bf q}_i$ if and only if $i\neq j$ (Lemma \ref{lem:homommon}).
\end{enumerate}
The first two points show that for each set $P\subseteq \mathbb{N}$ the class $\{M_i\mid i\in P\}$ generates a monoid variety that is intermediate between $\mathbb{B}_2^1$ and  $\mathbb{A}_2^1\wedge [\![x^2y^2\approx y^2x^2]\!]$.  These are established in the current section.  The last point is established in the subsequent section and shows that if $P,Q\subseteq \mathbb{N}$ and $P\neq Q$ then $\{M_i\mid i\in P\}$ and $\{M_i\mid i\in Q\}$ generate distinct varieties: without loss of generality, there is $i\in P\backslash Q$ and then $\{M_i\mid i\in Q\}\models {\bf p}_i\approx {\bf q}_i$ but $\{M_i\mid i\in P\}\not\models{\bf p}_i\approx {\bf q}_i$.

Our  monoids $M_1,\dots$ will invoke a construction recently introduced by the first author in \cite[Part 3]{jck}. There a monoid $S_{I_4}^1$ is constructed from a specific instance of the computational problem \emph{positive 1-in-3SAT}. \ In the present article we will apply the same construction to 3-uniform hypergraphs, though now the development will be centred around the consideration of these monoids in the context of each other and with ${\bf A}_2^1$, with only some basic properties relating to ${\bf B}_2^1$ having close overlap with the results in \cite{jck}: Lemma \ref{lem:flexB21} below is essentially Lemma 7.1 of \cite{jck} and Lemma \ref{lem:B21} below is close to Lemma 7.2 of \cite{jck}, though neither directly implies the other.

For a $3$-uniform hypergraph $\mathcal{H}=(V,E)$ of girth at least 3, define a binary relation~$\equiv$ on $\binom{V}{2}$ as follows.  We write $\{u,v\}\equiv \{x,y\}$ if there is $w$ with $\{u,v,w\}$ and $\{x,y,w\}$ both hyperedges, or if neither $\{u,v\}$ nor $\{x,y\}$ extend to hyperedges.  By Lemma~\ref{lem:girth}(I), this relation is transitive, while symmetry and reflexivity are obvious.  Thus $\equiv$ is an equivalence relation.  We let $[u,v]$ and $[\{u,v\}]$ abbreviate $\{u,v\}/{\equiv}$.

Given any $3$-uniform hypergraph $\mathcal{H}=\langle V,E\rangle$ of girth at least $4$, we now construct three semigroups, $S_\mathcal{H}^{\dsharp}$, $S_\mathcal{H}^\sharp$, $S_\mathcal{H}$.  Each is a quotient of its predecessor so we describe $S_\mathcal{H}^{\dsharp}$ first.

The generators of $S_\mathcal{H}^{\dsharp}$ are $\{0,\st\}\mathrel{\dot\cup}V$, subject to the usual multiplicative properties of $0,1$ and the following rules
\begin{enumerate}
\item $\st^2=\st u\st=\st uv\st=0$ for all $u,v\in V$.
\item $uv=vu$ for each $u,v\in V$.
\item $uu=0$ for all $u\in V$.
\item $uv=0$ for all $u,v\in V$ for which no hyperedge of $E$ extends $\{u,v\}$.
\item $\st uvw\st=\st$ whenever $\{u,v,w\}$ is a hyperedge.
\end{enumerate}

The semigroup $S_\mathcal{H}^\sharp$ is obtained by adjoining two further kinds of relation to this presentation.
\begin{enumerate}
\item[(6)] $uvw=u'v'w'$ if $\{u,v,w\},\{u',v',w'\}\in E$.  Denote this element by $\se$.
\item[(7)] $\se\st\se=\se$.
\end{enumerate}
Finally, $S_\mathcal{H}$ is obtained by adding one further family of rules:
\begin{enumerate}
\item[(8)] $uv=u'v'$ if $\{u,v\}\equiv \{u', v'\}$.
\end{enumerate}

For any subset $s=\{u_1,\dots,u_i\}$ of $V$, we let $\overline{s}$ denote the product $u_1\dots u_i$, with $\overline{s}=1$ if $i=0$.

Finally, we let $M_\mathcal{H}^{\dsharp}$, $M_\mathcal{H}^\sharp$, $M_\mathcal{H}$ denote the monoids obtained from $S_\mathcal{H}^{\dsharp}$, $S_\mathcal{H}^\sharp$, $S_\mathcal{H}$ respectively, by adjoining an identity element.

The following lemma gathers some basic facts regarding products in $M_\mathcal{H}^{\dsharp}$, $M_\mathcal{H}^\sharp$ and $M_\mathcal{H}$.
\begin{lem}\label{lem:normalform1}
Let $\mathcal{H}$ be a $3$-uniform hypergraph of girth at least $4$.  The following hold in each of $M_\mathcal{H}^{\dsharp}$, $M_\mathcal{H}^{\sharp}$ and $M_\mathcal{H}$.
\begin{itemize}
\item A product of generators is nonzero if and only if after omitting occurrences of $1$ it is of the form
\[
\mathsf{s}_0\st\se_1\st\dots \se_{k-1}\st\mathsf{s}_{k}
\]
where $k\geq 0$,
\item  $\mathsf{s}_0=\overline{s_0}$ for some possibly empty subset $s_0$ of some hyperedge $e$ in $E$,
\item  $\mathsf{s}_k=\overline{s_k}$ for some possibly empty subset $s_k$ of some hyperedge $e$ in $E$,
\item  $\se_i=\overline{e_i}$ for some hyperedge $e_i$ of $E$.
\end{itemize}
Moreover, the product $\mathsf{s}_0\st\se_1\st \dots \se_{k-1}\st\mathsf{s}_{k}$
reduces to $\mathsf{s}_0\st\mathsf{s}_k$.
\end{lem}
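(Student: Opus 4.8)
The plan is to analyze arbitrary products of the generators $\{0,\st\}\mathrel{\dot\cup}V$ in the presentations of $S_\mathcal{H}^{\dsharp}$, $S_\mathcal{H}^{\sharp}$, $S_\mathcal{H}$ and show that relations (1)--(8) allow any such product to be rewritten into a ``normal form'' of the stated shape, so that a product is nonzero precisely when it has such a form. First I would treat $M_\mathcal{H}^{\dsharp}$, since the other two monoids are quotients and any nonzero product in a quotient lifts (via a choice of preimage) to a product that must already be nonzero upstairs; conversely the normal form persists (possibly with further identifications of the blocks $\se_i$, $\mathsf{s}_0$, $\mathsf{s}_k$) under the extra relations (6)--(8). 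So the bulk of the work is in $M_\mathcal{H}^{\dsharp}$.

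For $M_\mathcal{H}^{\dsharp}$, I would argue as follows. Take any word in the generators; delete all occurrences of $1$; if $0$ appears, the product is $0$, so assume the word $\mathbf{w}$ is a nonempty string over $\{\st\}\cup V$. Using relation (2) (commutativity among the $V$-letters) I would sort each maximal $V$-block of $\mathbf{w}$, then use relation (3) to see that a repeated letter inside a block forces the product to $0$, so each block may be assumed to be $\overline{s}$ for some subset $s\subseteq V$ with distinct elements. Next, relation (4) says that if such a block $\overline{s}$ contains two vertices $u,v$ not lying in a common hyperedge then the product is $0$; combined with Lemma~\ref{lem:girth}(III) (every $4$-set of vertices has a pair not extending to a hyperedge), any $V$-block with $\ge 4$ distinct letters must be $0$, and any surviving block of size $3$ must by Lemma~\ref{lem:girth}(II) be (the product of) an actual hyperedge $e\in E$, i.e. equal to $\se$ after passing to $S_\mathcal{H}^{\sharp}$. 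Thus a nonzero $\mathbf{w}$ is an alternating concatenation of $\st$'s and $V$-blocks $\overline{s}$ with $|s|\le 3$, each size-$3$ block being a hyperedge. Now I invoke relation (1): $\st^2=\st u\st=\st uv\st=0$, which kills any occurrence of $\st\,\overline{s}\,\st$ with $|s|\le 2$, and relation (5): $\st uvw\st=\st$, which collapses any interior block $\st\,\overline{e}\,\st$ (with $e$ a hyperedge) down to a single $\st$. Iterating these reductions absorbs every interior $V$-block, leaving at most one leading block $\mathsf{s}_0=\overline{s_0}$ before the first $\st$ and at most one trailing block $\mathsf{s}_k=\overline{s_k}$ after the last $\st$, where each of $s_0,s_k$ is a subset of size $\le 2$ of a hyperedge (size $3$ is absorbed into an adjacent $\st$ only when flanked by $\st$ on both sides, so the extreme blocks can retain up to... — here I would double-check: an extreme block of size $3$ that is a hyperedge need not be reducible, but relation (6) lets us write it as $\se$; in $M_\mathcal{H}^{\dsharp}$ before (6) we simply keep it as $\overline{e}$, which is still "a subset of some hyperedge $e$", consistent with the statement). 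This yields exactly the form $\mathsf{s}_0\st\se_1\st\cdots\se_{k-1}\st\mathsf{s}_k$.

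For the ``moreover'' clause, the same reduction shows directly that once we have the canonical form, relations (1) and (5) let us delete each interior factor $\st\se_i\st$ and replace it by $\st$: formally, $\mathsf{s}_0\st\se_1\st\cdots\se_{k-1}\st\mathsf{s}_k = \mathsf{s}_0\st\st\cdots\st\mathsf{s}_k = \mathsf{s}_0\st\mathsf{s}_k$ using $\st\se_i\st=\st$ from (5) and then $\st\st=\st$ (which follows since $\st^2=0$... — no: here I must be careful, $\st^2=0\ne\st$, so the collapse is not literally $\st\st=\st$). The correct bookkeeping is: $\st\se_1\st\se_2\st = (\st\se_1\st)\se_2\st$ is \emph{not} directly $\st\se_2\st$ because there is no $\st$ to its right to reapply (5); instead one collapses from the inside, $\st\se_i\st \mapsto \st$ reduces a $k$-block form to a $(k-1)$-block form, and induction on $k$ gives $\mathsf{s}_0\st\se_1\st\cdots\se_{k-1}\st\mathsf{s}_k = \mathsf{s}_0\st\mathsf{s}_k$ once $k\ge 1$ (for $k=0$ there is nothing to prove). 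I would write this induction explicitly.

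The step I expect to be the main obstacle is the \emph{nonzero} direction: showing that the normal-form words are genuinely nonzero in $M_\mathcal{H}^{\dsharp}$ (and its quotients), rather than being secretly identified with $0$ by some unexpected chain of rewrites. For this the honest route is to exhibit a concrete faithful-enough model: construct a semigroup on the set of formal normal-form expressions (plus $0$), define the product by ``concatenate, re-sort $V$-blocks, and apply reductions (1)--(5)'', and verify that this product is well-defined and associative and that all of relations (1)--(5) hold in it, with each generator acting nontrivially. This is the routine-but-delicate combinatorial verification underlying the lemma; the girth hypotheses (via Lemma~\ref{lem:girth}) are exactly what make the $V$-block normalization well-defined (a $3$-set extends to at most one hyperedge, so $\se_i$ is unambiguous, and $\ge 4$-blocks are forced to $0$). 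I would present the construction of this model and note that associativity plus the validity of (1)--(5) is a direct check, then conclude that the canonical presentations of $S_\mathcal{H}^{\dsharp}$ etc. map \emph{onto} this model, forcing the normal-form elements to be distinct from $0$, which completes the proof.
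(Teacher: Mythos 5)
Your treatment of the ``only if'' direction and of the reduction clause is essentially the paper's own argument: rules (1)--(4) together with Lemma~\ref{lem:girth}(III) force successive occurrences of $\st$ to be separated by exactly three distinct, pairwise-compatible vertices, Lemma~\ref{lem:girth}(II) upgrades such triples to hyperedges, and rule (5), applied to an innermost factor $\st\se_i\st$ and iterated by induction on $k$ exactly as you describe, yields $\mathsf{s}_0\st\se_1\st\cdots\se_{k-1}\st\mathsf{s}_k=\mathsf{s}_0\st\mathsf{s}_k$. Your remark that a product which is nonzero in a quotient is already nonzero in $M_\mathcal{H}^{\dsharp}$ correctly transfers this direction to $M_\mathcal{H}^{\sharp}$ and $M_\mathcal{H}$, and your self-correction about extreme blocks of size $3$ is also fine.

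The gap is in the converse direction (every normal-form product is nonzero) for the two quotients. The model you propose validates only relations (1)--(5), so at best it proves nonzero-ness in $M_\mathcal{H}^{\dsharp}$ --- the case the paper dismisses as essentially trivial. For $M_\mathcal{H}^{\sharp}$ and especially $M_\mathcal{H}$ nothing follows, because nonzero upstairs does not imply nonzero in a quotient, and your sentence that ``the normal form persists under the extra relations (6)--(8)'' is precisely the assertion that needs proof. Relations (6) and (7) merely identify normal forms with one another, but relation (8) could in principle manufacture new zero products: after replacing $uv$ by $u'v'$ with $\{u,v\}\equiv\{u',v'\}$ inside a block $uvw$ with $\{u,v,w\}\in E$, one needs $\{u',v',w\}\in E$, since otherwise rule (4) (or, for longer blocks, Lemma~\ref{lem:girth}(III)) sends the product to $0$. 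This is the one point where the paper's proof does genuine work: by the definition of $\equiv$ and girth condition (I) (a pair lies in at most one hyperedge), $[u,v]=[u',v']$ together with $\{u,v,w\}\in E$ forces $\{u',v',w\}\in E$, hence $uvw=u'v'w=\se\neq 0$ in $M_\mathcal{H}$. Without this verification --- equivalently, without checking that your concatenate-and-reduce model remains well defined once (6)--(8) are imposed --- the lemma is not established for $M_\mathcal{H}$, which is the monoid all subsequent lemmas use.
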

\begin{proof}
Rule (1) ensures that a nonzero product must separate any occurrences of $\st$ with at least $3$ vertex elements.  However, condition (III) of Lemma \ref{lem:girth} ensures that any sequence of vertices $v_1,v_2,v_3,v_4$ must include either a repeat, or a pair of vertices that do not lie in any single hyperedge.  By Rule (2), we may rearranged the product $v_1v_2v_3v_4$ so that Rule (3) or (4) gives the value $0$.  Thus successive occurrences of $\st$ in a nonzero product are separated by a product of exactly $3$ vertices, and moreover, by Rules (2,3,4), all pairs of the three vertices must extend to a hyperedge, so that Lemma \ref{lem:girth}(II) shows that the three vertices must form a hyperedge in $E$.  This shows that such a product is of the form $\mathsf{s}_0\st\se_1\st\dots \se_{k-1}\st\mathsf{s}_{k}$ as described in the lemma.  Rule (5) enables the reduction to $\mathsf{s}_0\st\mathsf{s}_k$.  It remains to show that every product $\mathsf{s}_0\st\se_1\st\dots \se_{k-1}\st\mathsf{s}_{k}$ is nonzero.  This is basically trivial for $M_\mathcal{H}^{\dsharp}$, $M_\mathcal{H}^{\sharp}$, but for $M_\mathcal{H}$ we need to ensure that the extra law (8) does not enable the deduction of new $0$ products.  This rule only involves vertices, so it suffices to consider a product $uvw\neq 0$ in $M_\mathcal{H}^{\sharp}$ (that is, $\{u,v,w\}\in E$) and show that it is nonzero also after applications of rule (8).  But observe that if $[u,v]=[x,y]$, then $\{x,y,w\}\in E$ so that $uvw=xyw=\se$ in $M_\mathcal{H}$.
\end{proof}
 Note that if $k=0$ and $s_0$ is an actual hyperedge of $\mathcal{H}$, then $\overline{s_0}\st\overline{s_k}$ would be simply $\se$, while if $k=1$ but $s_1=\varnothing$, then $\overline{s_0}\st\overline{s_k}$ is just $\overline{s_0}\st$ (which would be $\se\st$ if $s_0\in E$ and the lemma is being applied to $M_\mathcal{H}^{\sharp}$ or to $M_\mathcal{H}$).

The following lemma is obvious and we omit the proof.
 \begin{lem}\label{lem:normalform2a}
 If $u,v,u',v'$ are subsets of hyperedges of $E$, then
 $\overline{u}\st\overline{v}=\overline{u'}\st\overline{v'}$ in $M_\mathcal{H}^{\dsharp}$ if and only if $u=u'$ and $v=v'$.
 \end{lem}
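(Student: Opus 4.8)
The ``if'' direction is immediate, so what needs proof is the ``only if'' direction, and the plan is to settle it by a single model-theoretic separation. First I would construct an auxiliary monoid $N$ and a homomorphism $\psi\from M_\mathcal{H}^{\dsharp}\to N$ under which every product $\overline{u}\,\st\,\overline{v}$, with $u,v$ subsets of hyperedges, is sent to an element that visibly records the ordered pair $(u,v)$. For $N$ take: a zero, an identity, one \emph{unmarked} element $\overline{s}$ for each nonempty subset $s$ of a hyperedge of $\mathcal{H}$, and one \emph{marked} element $(s_0,s_1)$ for each ordered pair of (possibly empty) subsets $s_0,s_1$ of hyperedges. The multiplication is dictated by Rules (1)--(5): $\overline{s}\cdot\overline{s'}$ equals $\overline{s\cup s'}$ when $s\cap s'=\varnothing$ and $s\cup s'$ is contained in a single hyperedge, and equals $0$ otherwise; $\overline{s}\cdot(s_0,s_1)$ equals $(s\cup s_0,s_1)$ under the same proviso on $s\cup s_0$ and equals $0$ otherwise, and symmetrically $(s_0,s_1)\cdot\overline{s}$ equals $(s_0,s_1\cup s)$ or $0$; and $(s_0,s_1)\cdot(s_0',s_1')$ equals $(s_0,s_1')$ exactly when $s_1\cap s_0'=\varnothing$ and $s_1\cup s_0'$ is a hyperedge, and equals $0$ otherwise. (So $0$ together with the marked elements forms a Rees matrix semigroup over the trivial group, with sandwich matrix indexed by subsets of hyperedges and carrying a $1$ precisely at pairs that partition a hyperedge --- the algebraic shadow of the relation $\st\,\overline{e}\,\st=\st$ for $e\in E$.)

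The proof then has three short steps. First, verify that $N$ is a monoid, i.e.\ that $\,\cdot\,$ is associative; this is a finite case analysis over the type-combinations of three factors, with a little care for the degenerate cases ($s_i=\varnothing$, or $\overline{s}$ a whole hyperedge, which can then only absorb $\varnothing$ on either side). Second, verify that the assignment $0\mapsto0$, $1\mapsto1$, $\st\mapsto(\varnothing,\varnothing)$, and $v\mapsto\overline{\{v\}}$ for $v\in V$ respects Rules (1)--(5): for instance $\st^2\mapsto(\varnothing,\varnothing)^2=0$ because $\varnothing$ is not a hyperedge, $\st u\st$ and $\st uv\st$ go to $0$ because a set of size $1$ or $2$ is not a hyperedge, $\st uvw\st$ goes to $(\varnothing,\varnothing)$ --- the image of $\st$ --- when $\{u,v,w\}\in E$ (the vertices accumulate in the right coordinate to form the marked element $(\varnothing,\{u,v,w\})$, which the trailing $\st$ then collapses back to $(\varnothing,\varnothing)$), and Rules (2)--(4) are immediate from the definition of $\,\cdot\,$ on the unmarked elements. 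This produces a monoid homomorphism $\psi\from M_\mathcal{H}^{\dsharp}\to N$. Third, note that by construction $\psi(\overline{u}\,\st\,\overline{v})=(u,v)$ whenever $u,v$ are subsets of hyperedges, so an equation $\overline{u}\,\st\,\overline{v}=\overline{u'}\,\st\,\overline{v'}$ in $M_\mathcal{H}^{\dsharp}$ is carried to $(u,v)=(u',v')$ in $N$, forcing $u=u'$ and $v=v'$.

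The only genuine work is the associativity check in the first step, and it is painless: no subcase needs anything beyond Rules (1)--(5) (the girth hypotheses of Lemma~\ref{lem:girth}, already used in Lemma~\ref{lem:normalform1}, are available but are not essential for the consistency of $N$). A route bypassing $N$ altogether would be to equip the presentation of $S_\mathcal{H}^{\dsharp}$ with a noetherian rewriting system working modulo the commutativity Rule~(2) and to read the uniqueness of the normal forms $\overline{s_0}\,\st\,\overline{s_1}$ off Lemma~\ref{lem:normalform1}; this is conceptually tidier but replaces the associativity check with a confluence check on the overlaps of Rules (1), (3), (4) and~(5), of comparable length. Either way the lemma, and with it the separation of the elements $\overline{u}\,\st\,\overline{v}$ on which the later constructions depend, falls out with no essential difficulty.
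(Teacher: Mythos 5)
Your proof is correct, and it does genuinely more than the paper, which simply declares this lemma obvious and omits any argument. Your route --- building the explicit finite monoid $N$ whose nonzero elements record either an unmarked subset $\overline{s}$ or an ordered pair $(s_0,s_1)$, checking relations (1)--(5) under $\st\mapsto(\varnothing,\varnothing)$, $v\mapsto\overline{\{v\}}$, and reading off $u=u'$, $v=v'$ from $\psi(\overline{u}\,\st\,\overline{v})=(u,v)$ --- is a clean and self-contained separation argument. I checked the associativity cases you defer to: they do all go through, the essential point being that ``contained in a single hyperedge'' is a hereditary condition on vertex sets while the sandwich condition for two marked factors demands a full hyperedge, so the side conditions on either bracketing of a triple product coincide; and, as you say, no girth hypothesis is needed for the consistency of $N$. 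The paper's implicit justification is different in flavour: one is meant to see directly that the rewriting rules (1)--(5) can only send a word in the normal form $\overline{s_0}\st\se_1\st\dots\st\overline{s_k}$ to zero, permute commuting vertex letters, or contract $\st\,\overline{e}\,\st$ to $\st$, none of which alters the pair $(u,v)$ of the reduced form --- in effect the confluence argument you mention as the alternative route. Alternatively the separation can be seen inside the subdirect power $T_\mathcal{H}/I\le({\bf A}_2^1)^N/I$ constructed in the proof of Lemma~\ref{lem:A21}, but since that proof itself invokes Lemma~\ref{lem:normalform2a}, your free-standing model $N$ has the advantage of avoiding any appearance of circularity, at the modest cost of the finite associativity check.
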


 The next lemma simply records the effect of adjoining rules (6,7,8).
 \begin{lem}\label{lem:normalform2}
 If $u,v,u',v'$ are subsets of hyperedges of $E$, then
 $\overline{u}\st\overline{v}=\overline{u'}\st\overline{v'}$ in $M_\mathcal{H}$ if and only if $|u|=|u'|$ and $|v|=|v'|$ and
 \begin{align*}
 u=u'&\text{ if $|u|=|u'|=1$}\\
 v=v'&\text{ if $|v|=|v'|=1$}\\
 [u]=[u']&\text{ if $|u|=|u'|=2$}\\
 [v]=[v']&\text{ if  $|v|=|v'|=2$}.
 \end{align*}
 \end{lem}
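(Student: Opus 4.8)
The plan is to track, via the normal form of Lemma~\ref{lem:normalform1}, exactly when two short products $\overline{u}\,\st\,\overline{v}$ and $\overline{u'}\,\st\,\overline{v'}$ become equal once rules (6), (7), (8) are imposed. Since each of $u,v,u',v'$ is a subset of some hyperedge, $|u|,|v|,|u'|,|v'|\in\{0,1,2,3\}$; and $|u|=3$ (respectively $|v|=3$) forces $\overline{u}=\se$ (respectively $\overline{v}=\se$) by rule (6), so we may as well focus on the sizes $0,1,2$, with the size-$3$ case reducing to a size-$0$ situation on that side after renaming $\se$. The $(\Leftarrow)$ direction is the easy half: if $|u|=|u'|\le 1$ and $u=u'$, similarly for $v$, there is literally nothing to prove; if $|u|=|u'|=2$ and $[u]=[u']$, then by definition of $\equiv$ the pair $u=\{a,b\}$ and $u'=\{a',b'\}$ either both fail to extend to a hyperedge (impossible here, as both are subsets of hyperedges, unless they are equal) or there is $w$ with $\{a,b,w\},\{a',b',w\}\in E$, whence rule (8) gives $\overline{u}=ab=a'b'=\overline{u'}$ directly; the same on the right side, and then $\overline{u}\,\st\,\overline{v}=\overline{u'}\,\st\,\overline{v'}$ follows. (The degenerate cases where a two-element set is $\equiv$ to itself only, because it does not sit with a common third vertex in two edges, are absorbed since $\equiv$ restricted to pairs that do extend to hyperedges is still reflexive.)

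For the $(\Rightarrow)$ direction I would argue that the relations (6), (7), (8) cannot collapse products of different ``shape''. First, $|u|=|u'|$ and $|v|=|v'|$: the cardinalities $|u|$ and $|v|$ are read off from the normal form in Lemma~\ref{lem:normalform1} applied with $k=1$ (when $v\neq\varnothing$) or $k=0$ (when $v=\varnothing$), and none of rules (6), (7), (8) changes the number of vertex generators strictly between consecutive $\st$'s or at the ends — rule (8) replaces two vertices by two vertices, rule (6) replaces three by three (and in $M_\mathcal H$ we only ever see it packaged as $\se$), and rule (7) only deletes a $\se\st\se\mapsto\se$ block, which does not occur inside a single $\overline{u}\,\st\,\overline{v}$ with $u,v$ proper subsets of edges. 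More carefully, I would invoke Lemma~\ref{lem:normalform2a} to see that in $M_{\mathcal H}^{\dsharp}$ the equality already forces $u=u'$ and $v=v'$ exactly; then $M_{\mathcal H}$ is the quotient of $M_{\mathcal H}^{\dsharp}$ by the congruence generated by (6), (7), (8), so two elements of $M_{\mathcal H}^{\dsharp}$ are identified in $M_{\mathcal H}$ iff they are connected by a finite chain of one-step applications of these rules, and I would check that such a chain preserves the size vector $(|u|,|v|)$ and, when a side has size $1$, preserves that singleton (since neither (6) nor (8) touches a side consisting of a single vertex — (8) requires two vertices to rewrite, and the block $uvw$ of (6) has three), and, when a side has size $2$, preserves the $\equiv$-class of that pair (because a single application of (8) on that side is precisely a move within one $\equiv$-class, by definition of $\equiv$, and Lemma~\ref{lem:girth}(I) guarantees $\equiv$ is genuinely an equivalence relation so the class is well defined along the whole chain).

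The one genuinely delicate point — the main obstacle — is making the ``chain of rewrites preserves the $\equiv$-class'' argument airtight: a priori an intermediate term in the rewriting chain might momentarily not be of the clean form $\overline{w}\,\st\,\overline{w'}$, or a rule might fire across the central $\st$ in a way that mixes the two sides. Here I would lean on Lemma~\ref{lem:normalform1}: every nonzero element of $M_{\mathcal H}$ (hence every term in the chain, since $\overline{u}\,\st\,\overline{v}\neq 0$ by that lemma) has the stated normal form $\mathsf s_0\st\se_1\st\cdots\st\mathsf s_k$ with each $\mathsf s_0,\mathsf s_k$ a subset of an edge and each $\se_i=\overline{e_i}$; and since rules (1)--(5) already reduce any such string to $\mathsf s_0\st\mathsf s_k$, the ``profile'' available to describe an element is just: the cardinality and (via (8)) the $\equiv$-class, or (via size one) the exact vertex, of the left block, and ditto for the right block, with $\se$ itself being the size-$3$ case. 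Because rules (6), (7), (8) never identify vertices or edges that were previously distinct in a way that would merge two $\equiv$-classes or change a cardinality — rule (6)/(7) are about the symbol $\se$ and the idempotent-like $\se\st\se=\se$, neither relevant to a single $\overline{u}\,\st\,\overline{v}$ block with $u,v$ proper — the only identifications that survive are exactly those listed in the statement. This gives the equivalence, and the lemma follows.
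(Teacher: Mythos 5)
The paper offers no proof of this lemma at all --- it is introduced with ``the next lemma simply records the effect of adjoining rules (6,7,8)'' --- so there is no argument of the authors' to compare against; your sketch is the natural way to discharge the claim, and its overall architecture (the $(\Leftarrow)$ direction directly from rules (6) and (8), the $(\Rightarrow)$ direction by showing that the congruence on $M_\mathcal{H}^{\dsharp}$ generated by (6)--(8) preserves the left/right block data recorded in the statement, with Lemma~\ref{lem:normalform2a} and Lemma~\ref{lem:girth}(I) as inputs) is correct.

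The one place where your argument is genuinely thinner than it should be is exactly the point you flag as delicate, and your resolution of it is partly an assertion of the conclusion. You dismiss rules (6) and (7) as ``not relevant to a single $\overline{u}\,\st\,\overline{v}$ block with $u,v$ proper,'' but the lemma also covers the cases $|u|=3$ or $|v|=3$, where rule (7) does fire (indeed $\se\st\se=\se$), and --- more importantly --- a derivation witnessing an equality in $M_\mathcal{H}$ may pass through words that are not of the clean two-block shape at all: backwards applications of rule (5) insert factors $\overline{e}\,\st$ between the two blocks, after which (6) and (7) can act on the inserted material, and a forward application of (7) can even delete the last remaining $\st$ when both end blocks are full hyperedges. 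So ``the chain preserves the size vector and the $\equiv$-class'' is not something one can read off from the endpoint words; it requires defining the invariant on \emph{all} nonzero words (letters before the first $\st$, letters after the last $\st$, with a convention assigning the $\st$-free word $\overline{e}$ the same profile as $\se\st\se$, which is harmless precisely because (7) is the only rule mixing $\st$-free and $\st$-containing words) and then checking it against each of the rules (1)--(8) applied in either direction, using Lemma~\ref{lem:normalform1} to see that nonzero words have end blocks of size at most $3$ and exactly three vertices between consecutive $\st$'s. None of these checks fails --- rule (2) permutes a block, (5) and (7) only touch material between the extremal $\st$'s or collapse full-hyperedge blocks whose profile is just ``size $3$'', (6) replaces a size-$3$ block by a size-$3$ block, and (8) moves a size-$2$ block within its $\equiv$-class --- so your proof goes through once this rule-by-rule verification replaces the sentence ``the only identifications that survive are exactly those listed,'' which as written assumes what is to be proved. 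Note that distinctions such as $\se\st\ne\se\st\se$ are part of the lemma's content, and they are settled only by this explicit invariance check.
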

 \begin{lem}\label{lem:idempotent}
 If $u,v$ are subsets of hyperedges of $E$, then
 $\overline{u}\st\overline{v}$ is idempotent in $M_\mathcal{H}$ if and only if $u\cap v=\varnothing$ and $u\cup v\in E$.  The only other idempotent elements are $0$ and $1$.
 \end{lem}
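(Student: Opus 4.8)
The plan is to test idempotency against the classification of nonzero elements provided by Lemma~\ref{lem:normalform1}. After deleting occurrences of $1$, every nonzero element of $M_\mathcal{H}$ is either $1$ itself, or a product $\overline{s}$ of vertices where $s$ is a (possibly empty) subset of a hyperedge, or an element $\overline{u}\st\overline{v}$ where $u,v$ are subsets of hyperedges; moreover, by the same lemma every product of the latter two shapes is nonzero. Since $0$ is trivially idempotent, it suffices to analyse these three shapes.

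The pure vertex products are dispatched at once: if $s=\varnothing$ then $\overline{s}=1$, which is idempotent; otherwise $s$ contains a vertex $a$, and using the commutativity rule~(2) to bring the two copies of $a$ in $\overline{s}\,\overline{s}$ together, rule~(3) gives $\overline{s}^{\,2}=0\neq\overline{s}$. Hence no pure vertex product other than $1$ is idempotent, and this already shows that $0$ and $1$ are the only idempotents lying outside the family $\{\overline{u}\st\overline{v}\}$.

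For the main claim I would compute, by associativity, $(\overline{u}\st\overline{v})^2=\overline{u}\,\st\,(\overline{v}\,\overline{u})\,\st\,\overline{v}$ and examine the inner vertex block $\overline{v}\,\overline{u}$. If $u\cap v\neq\varnothing$ then $\overline{v}\,\overline{u}$ contains a repeated vertex, so rules~(2) and~(3) collapse it to $0$; the square is then $0$, and since $\overline{u}\st\overline{v}$ is nonzero it is not idempotent. If $u\cap v=\varnothing$ then $\overline{v}\,\overline{u}$ equals, up to a rearrangement by rule~(2), the product $\overline{u\cup v}$ of the $|u\cup v|$ distinct vertices of $u\cup v$, so the square is $\overline{u}\st\overline{u\cup v}\st\overline{v}$. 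I would then re-run the argument inside the proof of Lemma~\ref{lem:normalform1}: this product vanishes unless $u\cup v$ is an actual hyperedge. Indeed, if $|u\cup v|\geq 4$ then Lemma~\ref{lem:girth}(III) together with rules~(2),(3),(4) forces $\overline{u\cup v}=0$; if $|u\cup v|=3$ but $u\cup v\notin E$ then the contrapositive of Lemma~\ref{lem:girth}(II) supplies a pair inside $u\cup v$ extending to no hyperedge, so rule~(4) again forces $\overline{u\cup v}=0$; and if $|u\cup v|\leq 2$ then $\st\,\overline{u\cup v}\,\st=0$ by rule~(1). In each case the square is $0$, so $\overline{u}\st\overline{v}$ is not idempotent. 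Conversely, if $u\cap v=\varnothing$ and $u\cup v\in E$, then $\overline{u\cup v}$ is the product of the three vertices of a hyperedge, so rule~(5) gives $\st\,\overline{u\cup v}\,\st=\st$, whence $(\overline{u}\st\overline{v})^2=\overline{u}\,\st\,\overline{v}=\overline{u}\st\overline{v}$; thus $\overline{u}\st\overline{v}$ is idempotent, completing the characterisation.

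I do not anticipate a deep obstacle here: once Lemmas~\ref{lem:girth}, \ref{lem:normalform1} and~\ref{lem:normalform2} are in hand the statement is essentially bookkeeping. The subtlest point, and the one I expect to trip up a careless argument, is the split between $|u\cup v|\leq 2$ and $|u\cup v|=3$ in the ``only if'' direction: when $u\cup v$ merely extends to a hyperedge without being one --- for instance $u=\{a\}$, $v=\{b\}$ with $\{a,b\}$ inside some hyperedge --- the block $\overline{v}\,\overline{u}$ is itself nonzero, so one must remember that it is rule~(1) (annihilating short vertex blocks flanked by two copies of $\st$), rather than rules~(3) or~(4), that kills the square.
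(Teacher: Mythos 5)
Your proof is correct and follows essentially the same route the paper intends: the paper simply asserts that the lemma ``follows almost immediately'' from Lemmas~\ref{lem:normalform1} and~\ref{lem:normalform2}, and your argument is precisely the routine bookkeeping being left implicit --- classify nonzero elements by the normal form, square them, and use rules (1)--(5) together with Lemma~\ref{lem:girth} to decide when the square survives. Your closing remark about the $|u\cup v|\leq 2$ case being killed by rule~(1) rather than rules~(3)/(4) is a fair and accurate observation, not a deviation from the paper's method.
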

 \begin{proof}
 This follows almost immediately from Lemma \ref{lem:normalform1} and Lemma \ref{lem:normalform2}.
 \end{proof}
 The following  lemma follows immediately from the observations so far and show that $\mathbb{B}_2^1\subseteq \mathbb{V}_m(M_\mathcal{H})$ for any hypergraph $\mathcal{H}$.
 \begin{lem}\label{lem:B21in}
 ${\bf B}_2^1$ is isomorphic to the submonoid of $M_\mathcal{H}$ generated as a monoid by $\{\st,\se\st\se\}$.
 \end{lem}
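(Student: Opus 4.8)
The plan is to write the isomorphism down explicitly. By rule (7) we have $\se\st\se=\se$, so the submonoid in the statement is $N:=\langle \st,\se\rangle$, the monoid generated by $\st$ and the element $\se=\overline{e}$ (for any hyperedge $e\in E$). First I would check that the pair $(\st,\se)$ satisfies the defining relations of ${\bf B}_2^1=\langle \sa,\ssb\mid \sa\ssb\sa=\sa,\ \ssb\sa\ssb=\ssb,\ \sa\sa=\ssb\ssb=0\rangle$: indeed $\st\se\st=\st\,\overline{e}\,\st=\st$ by rule (5) since $e\in E$; $\se\st\se=\se$ is rule (7); $\st^2=0$ is rule (1); and, writing $e=\{u,v,w\}$, rule (2) rearranges $\se^2=uvwuvw$ into $u^2v^2w^2$, which is $0$ by rule (3). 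Hence $\sa\mapsto\st$, $\ssb\mapsto\se$ extends to a surjective monoid homomorphism $\phi\colon {\bf B}_2^1\onto N$, with $\phi(0)=0$, $\phi(\sa\ssb)=\st\se$, $\phi(\ssb\sa)=\se\st$.

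Since $|{\bf B}_2^1|=6$, it suffices to show $\phi$ is injective, i.e.\ that the six elements $1$, $0$, $\st$, $\se$, $\st\se$, $\se\st$ of $M_\mathcal{H}$ are pairwise distinct. Each of $\st$, $\se$, $\st\se$, $\se\st$ is already in the normal form of Lemma \ref{lem:normalform1}---namely $\overline{\varnothing}\,\st\,\overline{\varnothing}$, $\overline{e}$, $\overline{\varnothing}\,\st\,\overline{e}$ and $\overline{e}\,\st\,\overline{\varnothing}$, with $\varnothing$ and $e$ being subsets of the hyperedge $e$---hence each is nonzero; and each lies in $S_\mathcal{H}$, so each differs from the adjoined identity $1$ and from $0$ (and $0\neq 1$). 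The remaining inequalities among $\st,\se,\st\se,\se\st$ follow from the four relations above together with nonzeroness: for example $\st=\se$ would force $\st\se=\st^2=0$; $\st\se=\se\st$ would force $\st=\st\se\st=\st(\se\st)=\st(\st\se)=\st^2\se=0$; and the equalities $\st=\st\se$, $\st=\se\st$, $\se=\st\se$, $\se=\se\st$ are each dispatched by multiplying on the appropriate side by $\st$ or $\se$ and invoking $\st\se\st=\st$, $\se\st\se=\se$, $\st^2=\se^2=0$ and the nonzeroness of the relevant short products. (Alternatively, Lemma \ref{lem:idempotent} shows $\st\se$ and $\se\st$ are idempotent while $\st$ and $\se$ are not, which separates $\{0,1,\st\se,\se\st\}$ from $\{\st,\se\}$ and leaves only $\st\neq\se$ and $\st\se\neq\se\st$ to verify.) Thus $N$ has exactly six elements and $\phi$ is a monoid isomorphism.

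The only ingredient with genuine content is the nonzeroness of $\st,\se,\st\se,\se\st$, which is precisely what Lemma \ref{lem:normalform1} supplies; everything else is a short computation inside the presentation, so I do not anticipate a real obstacle.
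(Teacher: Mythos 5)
Your proof is correct and is essentially the argument the paper intends: the paper gives no explicit proof, asserting the lemma "follows immediately" from rules (1)–(7) and the normal-form observations (Lemmas \ref{lem:normalform1}–\ref{lem:idempotent}), which is exactly what you spell out. Your verification of the ${\bf B}_2^1$ relations for $(\st,\se)$ and the distinctness of $1,0,\st,\se,\st\se,\se\st$ via nonzeroness of the normal forms is a faithful filling-in of that omitted detail.
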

 The next lemma shows that $\mathbb{V}_m(M_\mathcal{H})\subseteq [\![x^2y^2\approx y^2x^2]\!]$.
 \begin{lem}\label{lem:commute}
 If $e,f$ are idempotents in $M_\mathcal{H}$, then $ef=fe$.  In particular, one of the following is true\up: $e=f$\up; $0,1\in \{e,f\}$\up; $ef=fe=0$.
 \end{lem}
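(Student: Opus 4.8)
The plan is to pin down the idempotents of $M_\mathcal{H}$ via Lemma~\ref{lem:idempotent} and then to show that distinct nontrivial idempotents annihilate each other on both sides. If one of $e,f$ is $0$ or $1$ then $e$ and $f$ trivially commute, so suppose neither is; by Lemma~\ref{lem:idempotent} we may then write $e=\overline u\,\st\,\overline v$ and $f=\overline{u'}\,\st\,\overline{v'}$ with $u\cap v=\varnothing=u'\cap v'$ and $u\cup v,\,u'\cup v'\in E$, so that $|u|+|v|=|u'|+|v'|=3$ and each of $e,f$ has one of the ``shapes'' $(|u|,|v|)\in\{(0,3),(1,2),(2,1),(3,0)\}$.

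The crux is the following claim: \emph{if $e,f$ are nontrivial idempotents of $M_\mathcal{H}$ with $ef\neq 0$, then $e=f$.} Suppose $ef\neq 0$. In the product $ef=\overline u\,\st\,\overline v\,\overline{u'}\,\st\,\overline{v'}$ the two occurrences of $\st$ are separated precisely by the vertices of $v$ followed by those of $u'$, so Lemma~\ref{lem:normalform1} forces these to be three distinct vertices constituting a hyperedge; that is, $v\cap u'=\varnothing$, $|v|+|u'|=3$, and $v\cup u'\in E$. Comparing with $|u|+|v|=|u'|+|v'|=3$ gives $|u|=|u'|$ and $|v|=|v'|$, so $e$ and $f$ have the same shape. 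For shapes $(0,3)$ and $(3,0)$ this already forces $e=f$ by Lemma~\ref{lem:normalform2} (which imposes nothing extra on blocks of size $0$ or $3$); indeed both then equal $\st\se$, respectively $\se\st$. For shape $(1,2)$ write $u=\{a\}$ and $u'=\{a'\}$, with $|v|=|v'|=2$; by Lemma~\ref{lem:girth}(I) a pair of vertices lies in at most one hyperedge, so a pair that extends to a hyperedge has a well-defined ``third vertex''. Idempotency of $e$ gives $\{a\}\cup v\in E$, so $a$ is the third vertex of $v$; and $v\cup u'=v\cup\{a'\}\in E$ shows $a'$ is also the third vertex of $v$, whence $a=a'$ and $u=u'$. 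Idempotency of $f$ gives $\{a'\}\cup v'\in E$, so $a\,(=a')$ is likewise the third vertex of $v'$, and therefore $v\equiv v'$; now Lemma~\ref{lem:normalform2} yields $e=f$. Shape $(2,1)$ is entirely symmetric (formally, apply the word-reversal anti-automorphism of $M_\mathcal{H}$, which interchanges the two sides of $\st$). This proves the claim.

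The lemma follows immediately: given arbitrary idempotents $e,f$, if one of them is $0$ or $1$, or if $e=f$, we are done; otherwise $e$ and $f$ are distinct nontrivial idempotents, so the claim gives $ef=0$, and applying it with $e$ and $f$ interchanged gives $fe=0$, hence $ef=fe=0$. I expect the main obstacle to be the bookkeeping in the ``third vertex'' argument, and in particular making transparent that it is exactly the girth-$\ge 4$ hypothesis---through condition~(I) of Lemma~\ref{lem:girth} together with the identifications made by rule~(8) and recorded in Lemma~\ref{lem:normalform2}---that collapses superficially different idempotents to one and the same element. One should also keep an eye on the degenerate shapes $(0,3)$ and $(3,0)$, where $\overline u$ or $\overline v$ is empty, so that the block lying between the two occurrences of $\st$ reduces to $\overline{u'}$, respectively $\overline v$, on its own.
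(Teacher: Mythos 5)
Your proposal is correct and follows essentially the same route as the paper's proof: characterise the nontrivial idempotents via Lemma~\ref{lem:idempotent}, use Lemma~\ref{lem:normalform1} on $ef\neq 0$ to get $v\cup u'\in E$ and hence matching block sizes, and then use girth condition (I) together with the relation $\equiv$ (Lemma~\ref{lem:normalform2}) to force $e=f$, so that distinct nontrivial idempotents annihilate each other. Your treatment of the $(1,2)$ shape is just the mirror image of the paper's $(2,1)$ case, covered there by the same left--right symmetry you invoke.
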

 \begin{proof}
 Assume $e,f$ are idempotents, not including $0$ or $1$.  Let $u,v,u',v'$ be subsets of hyperedges with $e=\overline{u}\st\overline{v}$ and $f=\overline{u}'\st\overline{v}'$.  Assume $ef\neq 0$.  Now, Lemma \ref{lem:normalform1} implies that $v\cap u'=\varnothing$ and $v\cup u'\in E$ is a hyperedge.  By Lemma \ref{lem:idempotent} we also have $v\cap u=v'\cap u'=\varnothing$ and $u\cup v,u'\cup v'\in E$.  Hence $|u|=|u'|$ and $|v|=|v'|$ as well as $|u|+|v|=3$.  If $0\in \{|u|,|v|\}$ we are done as then $\overline{u}=\overline{u'}$ and $\overline{v}=\overline{v'}$.  Otherwise, there is no loss of generality in assuming that $|u|=|u'|=2$ and $|v|=|v'|=1$.  Then the condition $u\cup v\in E$ and $u'\cup v\in E$ implies that $u\equiv u'$ so that $\overline{u}=\overline{u'}$.  Then Lemma \ref{lem:girth}(I) implies that $v=v'$ also, as required.
 \end{proof}
\begin{lem}\label{lem:A21}
For any $3$-uniform hypergraph $\mathcal{H}=(V,E)$ of girth at least $4$, the monoid $M_\mathcal{H}$ lies in~$\mathbb{A}_2^1$.
\end{lem}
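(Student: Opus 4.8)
The plan is to realise $M_\mathcal{H}$ as a homomorphic image of a submonoid of a power of $\mathbf{A}_2^1$; since $\mathbb{A}_2^1=\mathsf{HSP}(\mathbf{A}_2^1)$ this gives $M_\mathcal{H}\in\mathbb{A}_2^1$. It is worth stressing that the passage to a homomorphic image is unavoidable: the only monoid homomorphism from $\mathbf{B}_2^1$ to $\mathbf{A}_2^1$ is the one collapsing everything except the identity to $0$ (the only square-zero elements of $\mathbf{A}_2^1$ are $0$ and $\ssc$, and if $\sa,\ssb$ both map into $\{0,\ssc\}$ then $\sa\ssb\sa$ maps to $0\neq\sa$), so by Lemma~\ref{lem:B21in} the monoid $M_\mathcal{H}$ does not embed into any power of $\mathbf{A}_2^1$ and no argument via a separating family of homomorphisms into $\mathbf{A}_2^1$ can work. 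The template to imitate is the classical proof that $\mathbf{B}_2^1\in\mathbb{A}_2^1$: the submonoid of $\mathbf{A}_2^1\times\mathbf{A}_2^1$ generated by $(\ssc,\sd)$ and $(\sd,\ssc)$ carries two spurious nonzero idempotents, namely $(0,\sd)$ and $(\sd,0)$, and collapsing these to $0$ returns exactly $\mathbf{B}_2^1$; here the idempotent $\sd$ serves as the $\mathbf{A}_2^1$-shadow of a square-zero element.

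First I would reduce to the case that $\mathcal{H}=(V,E)$ is finite. As $\mathbb{A}_2^1$ is generated by a finite monoid it is locally finite, so it suffices to show that every finitely generated submonoid of $M_\mathcal{H}$ lies in $\mathbb{A}_2^1$; and by the normal form of Lemma~\ref{lem:normalform1} together with Lemmas~\ref{lem:normalform2a} and~\ref{lem:normalform2}, any such submonoid sits inside $M_{\mathcal{H}'}$ for a finite induced subhypergraph $\mathcal{H}'$ of $\mathcal{H}$ — one takes the finitely many vertices occurring and then adjoins the (finitely many, by Lemma~\ref{lem:girth}(I)) vertices completing pairs of them to hyperedges, so that $\equiv$ restricts correctly to $\mathcal{H}'$. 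Girth at least $4$ is inherited because conditions (I) and (II) of Lemma~\ref{lem:girth} pass to induced subhypergraphs.

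With $\mathcal{H}$ finite I would build the covering submonoid inside a power $(\mathbf{A}_2^1)^\Lambda$, where $\Lambda$ has one coordinate for each hyperedge $e\in E$ together with a choice of a distinguished vertex (or ordered sub-pair) of $e$; the precise indexing is the delicate point, and is the analogue of the two coordinates used for $\mathbf{B}_2$. In the coordinate attached to such a pair, $\st$ goes to the square-zero element $\ssc$, while a vertex $u$ goes to an element of $\{0,1,\ssc,\sd,\ssc\sd,\sd\ssc\}$ determined by how $u$ sits relative to $e$: an idempotent-type value if $u$ is the distinguished part of $e$, a value absorbed appropriately on one side if $u$ is another vertex of $e$, and $0$ or $1$ if $u\notin e$ — tuned so that coordinatewise the image stays in $\mathbf{A}_2^1$, a product of two vertices not lying in a common hyperedge (and any repeated vertex) collapses to $0$, and a product of the three vertices of $e$ yields an $\mathbf{A}_2^1$-shadow of $\se=\overline e$ whose interaction with $\st$ mimics $\ssc\sd\ssc=\ssc$. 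Using Lemmas~\ref{lem:normalform1}--\ref{lem:idempotent} one then checks that distinct elements of $M_\mathcal{H}$ receive distinct images in $(\mathbf{A}_2^1)^\Lambda$, so that the submonoid they generate admits $M_\mathcal{H}$ as a homomorphic image; the extra elements of the cover — above all the idempotent shadow of $\se$ and its multiples — are collapsed onto $0$, exactly as the spurious idempotents are in the $\mathbf{B}_2$ template.

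The hard part is this coordinate design. One must simultaneously arrange that each coordinate genuinely lands in the six-element monoid $\mathbf{A}_2^1$ (a strong constraint on the images of the vertices), that the hypergraph-dependent rules (4), (5) and, above all, rule (8) are reproduced in the cover so that the $\equiv$-classes $[u,v]$ are neither conflated nor split and a vertex $u$ stays distinct from the pair $\overline{\{u,v\}}$, and that the collapse needed to recover $M_\mathcal{H}$ is precisely the one that turns the idempotent shadow of $\se$ back into the nilpotent element $\se$. It is here that the girth hypotheses enter through Lemma~\ref{lem:girth}: condition (I) makes $\equiv$ an equivalence and makes the distinguished-pair data well defined, (II) forces the three vertices separating two consecutive occurrences of $\st$ in any nonzero product to form a genuine hyperedge, and (III) guarantees that any product of four vertices already vanishes, which is what confines every coordinatewise image to $\mathbf{A}_2^1$.
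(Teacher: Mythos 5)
Your overall plan --- realise $M_\mathcal{H}$ as a homomorphic image (a Rees quotient modulo an ideal of ``spurious'' elements) of a submonoid of a power of $\mathbf{A}_2^1$, the embedding route being blocked because $\mathbf{B}_2^1$ admits no nontrivial homomorphism into $\mathbf{A}_2^1$ --- is exactly the strategy of the paper. But there is a genuine gap: the construction that constitutes the proof is never given. You explicitly set aside ``the precise indexing'' and ``the coordinate design'' as the hard part, and the proof of the lemma \emph{is} that design together with its verification. In the paper the index set is $\binom{V}{2}\mathrel{\dot\cup}P\mathrel{\dot\cup}V$, where $P$ is a marked copy of the pairs that do \emph{not} extend to a hyperedge; $\hat{\st}$ takes the square-zero value on the $\binom{V}{2}$-coordinates and the idempotent generator elsewhere, each $\hat{v}$ is given by an explicit six-case formula, and one checks both that rules (1)--(5) hold in $T_\mathcal{H}/I$ (where $I$ is the ideal of tuples with a zero coordinate) and that no further equalities arise, so that $T_\mathcal{H}/I\cong M_\mathcal{H}^{\dsharp}$. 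None of this is replaced by anything concrete in your sketch.

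Moreover, the specific indexing you float --- one coordinate per hyperedge $e$ with a distinguished vertex or ordered sub-pair --- does not obviously support the verification. A pair $\{u,v\}$ extending to no hyperedge is never ``seen'' jointly by any hyperedge coordinate, so rule (4) has no coordinate to enforce it: assigning a vertex the value $0$ in some coordinate would place the generator $\hat{u}$ itself in the collapsing ideal (forcing $u\mapsto 0$, which is false in $M_\mathcal{H}$), while assigning it $1$ outside its hyperedges lets arbitrarily long illegal products survive the collapse, and then the ``no extra equalities'' half of the argument (which is what a surjection onto $M_\mathcal{H}$ actually requires: every equality in the quotient must already hold in $M_\mathcal{H}$) is in serious jeopardy. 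This is precisely why the paper uses coordinates indexed by \emph{all} pairs (to enforce rule (1)), by the marked non-extending pairs $P$ (rule (4)), and by the vertices (rule (3) and separation). Finally, your insistence that rules (6)--(8), in particular the $\equiv$-identifications of rule (8), be ``reproduced in the cover'' is an unnecessary and self-inflicted difficulty: the paper realises only $M_\mathcal{H}^{\dsharp}$ inside the power and then finishes by observing that $M_\mathcal{H}$ is a further quotient of $M_\mathcal{H}^{\dsharp}$, so the hypergraph-equivalence structure never needs to be encoded coordinatewise at all. (Your preliminary reduction to finite $\mathcal{H}$ is harmless but superfluous, since the power may be taken over an infinite index set.)
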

\begin{proof}
We wish to use the set $\binom{V}{2}$ of all pairs of vertices, but also a second marked copy of some of these.  Let $P$ denote a marked copy of the set of all pairs of vertices that do not extend to a hyperedge. So $\{u,v\}$ is a pair in $\binom{V}{2}$, while $\{u,v\}'$ is the copy of $\{u,v\}$ in $P$ (provided $\{u,v\}$ extends to a hyperedge in $E$).  This is just a notational device, and we will still consider $u$ and $v$ to be elements of the ``marked set'' $\{u,v\}'$ (if $\{u,v\}$ extends to a hyperedge) and abuse notation by writing $u,v\in\{u,v\}'$.  Let $N$ denote the disjoint union $\binom{V}{2}\mathrel{\dot\cup} P\mathrel{\dot\cup} V$.  We show that $M_\mathcal{H}^{\dsharp}$ is a quotient of $({\bf A}_2^1)^{N}$.
We allow a slight duplication notation and use $1$ to denote the constant tuple $(1,1,\dots)$.  Next, let $\hat\st\in ({\bf A}_2^1)^{N}$ be equal to $a$ on each $\{u,v\}\in\binom{V}{2}$ and $b$ on each $i\in P\cup V$.
Now let $\hat{v}$ be defined at $p\in \binom{V}{2}\cup P\cup V$ by
\[
\hat{v}(p):=\begin{cases}
1&\text{ if }p\in\binom{V}{2}\text{ and }v\in p\\
b&\text{ if }p\in\binom{V}{2}\text{ and }v\notin p\\
a&\text{ if }p\in P\text{ and }v\in p\\
1&\text{ if }p\in P\text{ and }v\notin p\\
1&\text{ if }p\in V\text{ and }v\neq p\\
a&\text{ if }v=p.
\end{cases}
\]
Let $T_\mathcal{H}$ be the submonoid of $({\bf A}_2^1)^N$ generated by $1$, $\hat{\st}$ and $\hat{u}$ for each $u\in V$.
Let $I$ be the ideal of $T_\mathcal{H}$ consisting of all elements that have a coordinate equal to $0$.

We are first going to verify that the ``hat'' versions of the laws~(1)--(5) in the definition of $M_\mathcal{H}^{\dsharp}$ hold in $T_\mathcal{H}/I$.
Observe that $\hat{u}\hat{v}=\hat{v}\hat{u}$ (verifying property~(2)), and we use this freely below.
Now $\hat{u}\hat{u}(u)=aa=0$, so that $\hat{u}\hat{u}\in I$ (verifying property~(3)).  Also, if $\{u,v\}\in\binom{V}{2}$ does not extend to a hyperedge, then $\{u,v\}'\in P$, so that $\hat{u}\hat{v}(\{u,v\}')=aa=0$, showing that $\hat{u}\hat{v}\in I$ (verifying property~(4)).  Thus by Lemma \ref{lem:girth}(IV), a product $\hat{u_1}\hat{u_2}\dots\hat{u_k}$ (for $u_1,\dots,u_k\in V$) can only lie outside of $I$ if:  $k=1$; or $k=2$ and $\{u_1,u_2\}$ is a $2$-element subset of a hyperedge; or $k=3$ and $\{u_1,u_2,u_3\}$ is a hyperedge in $E$.  In these three cases where $\hat{u_1}\hat{u_2}\dots\hat{u_k}\notin I$, we have $\hat{u_1}\dots\hat{u_k}(p)\in \{1,a\}$ for all $p\in P$.  Then $\hat{\st}\hat{u_1}\dots\hat{u_k}\hat{\st}(p)=b$ for $p\in P$.

Next observe that for any two element subset $\{u,v\}\in\binom{V}{2}$ we have $\hat{u}\hat{v}(\{u,v\})=1$.  Then $\hat{\st}\hat{u}\hat{v}\hat{\st}(\{u,v\})=a11a=aa=0$, showing that $\hat{\st}\hat{u}\hat{v}\hat{\st}\in I$.  Similarly, $\hat{\st}\hat{u}\hat{\st}$ and $\hat{\st}\hat{\st}$ lie in $I$ (verifying property~(1)).
On the other hand, if $u,v,w\in V$ are distinct, then $\hat{u}\hat{v}\hat{w}(p)=b$ for $p\in\binom{V}{2}$, and then $\hat{\st}\hat{u}\hat{v}\hat{w}\hat{\st}(p)=aba=a$.  As we have already established that $\hat{\st}\hat{u}\hat{v}\hat{w}\hat{\st}(p)=b$ when $p\in P$ and as $\hat{\st}\hat{u}\hat{v}\hat{w}\hat{\st}(p)=bab=b$ when $p\in V$ we have that $\hat{\st}\hat{u}\hat{v}\hat{w}\hat{\st}=\hat{\st}$ (verifying property (5)).

All of this shows that $T_\mathcal{H}/I$ satisfies all of the rules (1)--(5), and moreover, that the only products of generators (ignoring~$1$) not falling into $I$ are those of the form described in Lemma \ref{lem:normalform1} (but with added hat).  However, it is also straightforward to verify from Lemma \ref{lem:normalform2a} that no other equalities hold outside of those that hold in $M_\mathcal{H}^{\dsharp}$.  We briefly consider cases.  Products not involving $\hat{\st}$ are distinguished from those involving $\hat{\st}$ by taking the value $1$ at multiple points on $V$ (whereas those involving~$\hat{\st}$ take values in $\{a,b\}$ everywhere).  We can distinguish nonzero products of vertex elements $\hat{u_1}\dots\hat{u_k}$ and $\hat{v_1}\dots\hat{v_\ell}$ (for $k,\ell\leq 3$) by selecting a vertex $w$ lying in one of $\{u_1,u_2,u_3\}$ and $\{v_1,v_2,v_3\}$ and not the other: then  $\hat{u_1}\dots\hat{u_k}(w)\neq\hat{v_1}\dots\hat{v_\ell}(w)$.  The same idea distinguishes products of the form $\hat{u_1}\dots\hat{u_k}\hat{\st}x$ from those of the form $\hat{v_1}\dots\hat{v_k}\hat{\st}y$, as one will take a value in $\{a,ab\}$ at the point $w$ while the other will take a value in $\{b,ba\}$; a symmetric argument shows that we may separate $x\hat{\st}\hat{u_1}\dots\hat{u_k}$ from those of the form $y\hat{\st}\hat{v_1}\dots\hat{v_k}$.
Thus $T_\mathcal{H}/I\cong M_\mathcal{H}^{\dsharp}$.  Because $M_\mathcal{H}$ is a quotient of~$M_\mathcal{H}^{\dsharp}$ the lemma is proved.
\end{proof}

For $\ell>1$, an \emph{$\ell$-colouring} of a $3$-uniform hypergraph $\mathcal{H}=(V,E)$ is any function $\gamma:V\to \{0,\dots,\ell-1\}$ that leaves no hyperedge of $\mathcal{H}$ monochromatic; that is, $|\gamma(e)|\geq 2$ for each $e\in E$.  Equivalently, an $\ell$-colouring is a homomorphism from $\mathcal{H}$ to the  hypergraph on $\{0,\dots,\ell-1\}$ whose hyperedges consist of all subsets of size $2$ or $3$ (that is, it is not uniform).  The following deep result will be used extensively later, but here shows simply that there do exist hypergraphs of girth at least $4$ and which have high chromatic number; alternatively use Feder and Vardi \cite[Theorem~5]{fedvar}, noting Ham and Jackson \cite[Lemma~2.5]{hamjac}.
\begin{thm}[Erd\H{os} and Hajnal \cite{erdhaj}]\label{thm:EH}
For any $k,\ell>1$, there is a finite $3$-uniform hypergraph $\mathcal{H}$ with chromatic number $k$ and girth $\ell$.
\end{thm}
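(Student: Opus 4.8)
The statement to prove is Theorem~\ref{thm:EH}, the Erd\H{o}s--Hajnal theorem on the existence of $3$-uniform hypergraphs of arbitrarily high chromatic number and arbitrarily large girth. Since the excerpt explicitly attributes this to Erd\H{o}s and Hajnal and offers an alternative route via Feder--Vardi plus Ham--Jackson, the cleanest plan is to observe that this is a citation rather than a claim requiring fresh proof, and to record the probabilistic argument in outline form.

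The plan is to use the probabilistic deletion method. Fix $k,\ell > 1$. On a vertex set of size $n$ (to be chosen large), form a random $3$-uniform hypergraph $\mathcal{H}_0$ by including each of the $\binom{n}{3}$ possible triples independently with probability $p = n^{\alpha - 3}$ for a suitably small $\alpha > 0$ (roughly $\alpha < 1/\ell$). First I would bound the expected number of ``short cycles'', i.e. cycles of length less than $\ell$ in the hypergraph sense defined in the Preliminaries section: a cycle of length $j$ is determined by $O(n^{j})$ choices of its vertices and edges, and carries $j$ edges, so its expected count is of order $n^{j} p^{j} = n^{j\alpha}$; summing over $2 \le j < \ell$ gives $o(n)$ provided $\alpha$ is chosen small enough. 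By Markov's inequality, with probability at least $3/4$ the number of short cycles is at most, say, $n/2$. Delete one vertex from each short cycle to obtain $\mathcal{H}_1$ with at least $n/2$ vertices and girth at least $\ell$.

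Next I would control the independence/colourability side. A proper $k$-colouring of $\mathcal{H}_1$ (leaving no hyperedge monochromatic) in particular gives a colour class that is ``independent'' in the weak sense of containing no full hyperedge, and among $n/2$ vertices some class has size at least $n/(2k) =: s$. So it suffices to show that with probability at least $3/4$ the original $\mathcal{H}_0$ has no set of $s$ vertices spanning no hyperedge. The expected number of such ``bad'' $s$-sets is $\binom{n}{s}(1-p)^{\binom{s}{3}} \le n^{s} \exp(-p\binom{s}{3})$, and with $p = n^{\alpha-3}$ and $s = n/(2k)$ one has $p\binom{s}{3} \sim c n^{\alpha}/k^3 \cdot n^2 \cdot$ wait --- more carefully $p s^3 \sim n^{\alpha - 3} \cdot n^3/(8k^3) = n^\alpha/(8k^3)$, which dominates $s\log n \sim n\log n/(2k)$; hence this expectation is $o(1)$ for $n$ large. (The exponents need a careful check, but the mechanism is standard: the exponential saving $n^\alpha$ beats the polynomial-in-$n$ loss once the set size is linear in $n$.) Combining, for $n$ large there is a positive-probability event on which $\mathcal{H}_0$ has no bad $s$-set and few short cycles; fix such an $\mathcal{H}_0$, delete to get $\mathcal{H}_1$ of girth $\ge \ell$ with chromatic number $> k$. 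Deleting isolated points and passing to an induced subhypergraph only increases girth, and one can further thin to land on chromatic number exactly $k$ if desired. The main obstacle — really the only delicate point — is the bookkeeping of the exponents: one must choose $\alpha$ simultaneously small enough that $\sum_{j<\ell} n^{j\alpha} = o(n)$ and large enough (any fixed positive value works) that $p\binom{s}{3}$ outgrows $s\log n$; since the two constraints are compatible for every fixed $\ell$, the argument goes through, and I would simply cite \cite{erdhaj} (or \cite{fedvar} together with \cite{hamjac}) rather than reproduce these estimates in full.
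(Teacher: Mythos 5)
The paper offers no proof of this statement: it is imported verbatim as a citation to Erd\H{o}s--Hajnal, with a parenthetical remark that one could alternatively derive it from Feder--Vardi together with Ham--Jackson. You have treated it the same way, and your outline of the probabilistic deletion argument (random $3$-uniform hypergraph with edge probability $p = n^{\alpha-3}$, Markov bound on short cycles, first-moment bound on large hyperedge-free sets, then delete a vertex from each short cycle) is a faithful sketch of the standard Erd\H{o}s-style proof. So your treatment matches the paper's in the only sense that matters: both defer to the literature.

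One small observation on the sketch itself, since you flag the bookkeeping as the delicate point: your cycle count ``$O(n^j)$ choices carrying $j$ edges'' is slightly off for $3$-uniform hypergraphs, where a cycle of length $j$ uses $j$ edges and up to $2j$ vertices (each edge contributes one extra ``private'' vertex beyond the two that it shares with its neighbours on the cycle), so the expected count is of order $n^{2j}p^j = n^{j(\alpha-1)}$ rather than $n^{j\alpha}$; this is actually \emph{more} favourable and only widens the admissible range of $\alpha$. Also note that the statement as printed asks for chromatic number \emph{exactly} $k$ and girth \emph{exactly} $\ell$, whereas the construction naturally gives $\chi > k$ and girth $\ge \ell$; your remark that one can thin down to hit the exact values is fine (deleting a vertex lowers $\chi$ by at most $1$), and in any case the paper's applications only ever invoke lower bounds, so the exact-value phrasing is cosmetic.
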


A $2$-colouring $\gamma$ of a hypergraph $\mathcal{H}=(V,E)$ is a \emph{majority two-colouring} if each hyperedge of $E$ receives two values of $1$ and one of $0$.  Equivalently, a majority $2$-colouring is a homomorphism into the structure on $\{0,1\}$ with ternary relation $\{(1,1,0),(1,0,1),(0,1,1)\}$, where we treat the set of hyperedges as a ternary relation $\{(u,v,w)\mid \{u,v,w\}\in E_\mathcal{H}\}$.  This is of course just the template for the classic \texttt{NP}-complete problem \emph{positive 2-in-3SAT}, though we will not need this fact in the present article; see \cite{jck} for development in that direction.  Because a majority $2$-colouring is a $2$-colouring, it follows from Theorem \ref{thm:EH} that there exist $3$-uniform hypergraphs of girth at least $4$ and that are not majority $2$-colourable.  The following lemma is essentially a special case of \cite[Lemma~7.2]{jck}, though the assumptions here are perhaps slightly different.
\begin{lem}\label{lem:B21}
If $M_\mathcal{H}\in\mathbb{B}_2^1$, then $\mathcal{H}$ has a majority $2$-colouring.
\end{lem}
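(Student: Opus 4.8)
The plan is to combine the equational description $\mathbb{B}_2^1=\mathsf{HSP}(\mathbf{B}_2^1)$ with the rigid arithmetic of $\mathbf{B}_2^1$, and to read off a majority $2$-colouring of $\mathcal{H}$ from a subdirect representation of $M_\mathcal{H}$. Assume $M_\mathcal{H}\in\mathbb{B}_2^1$. As $M_\mathcal{H}$ is generated by $\{\st\}\cup V$ and $\mathbb{B}_2^1$ is locally finite, $M_\mathcal{H}$ is a homomorphic image of a submonoid $R$ of a power $(\mathbf{B}_2^1)^I$ (with $I$ finite when $\mathcal{H}$ is, which is the case of interest for us); fix a surjection $\rho\colon R\onto M_\mathcal{H}$, pre-images $\hat{\st}$ of $\st$ and $\hat v$ of each $v\in V$, write $\st^{(i)},v^{(i)}\in\mathbf{B}_2^1$ for their $i$-th coordinates, and let $\phi_i\colon R\to\mathbf{B}_2^1$ be the $i$-th projection; after enlarging $R$ by the all-zero tuple we may assume $R$ has a zero, mapped by $\rho$ to $0$. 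Recall the arithmetic of $\mathbf{B}_2^1$: with $e_{11}=\sa\ssb$, $e_{12}=\sa$, $e_{21}=\ssb$, $e_{22}=\ssb\sa$ one has $e_{ij}e_{kl}=\delta_{jk}e_{il}$; the idempotents are exactly $1,0,e_{11},e_{22}$, no three of which are pairwise orthogonal and nonzero; $e_{12}xe_{12}$ equals $e_{12}$ if $x=e_{21}$ and equals $0$ otherwise; and among all multisets $\{a,b,c\}$ of elements of $\mathbf{B}_2^1$, the only one whose product is $e_{21}$ under every ordering is $\{1,1,e_{21}\}$. Since $\st^2=0\neq\st$, the tuple $\hat{\st}$ is not idempotent, so $\st^{(i)}\in\{e_{12},e_{21}\}$ for some $i$; call such coordinates \emph{active} (at the active coordinate we eventually use we may, by applying the automorphism $\sa\leftrightarrow\ssb$ of $\mathbf{B}_2^1$ there, assume $\st^{(i)}=e_{12}$).

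Suppose we have found a single active coordinate $i$ with $\st^{(i)}=e_{12}$ such that $\st^{(i)}u^{(i)}v^{(i)}w^{(i)}\st^{(i)}\neq0$ for every hyperedge $e=\{u,v,w\}$ and every ordering of its vertices; producing such an $i$ is the crux, discussed below. For such an $i$, the fact that $e_{12}xe_{12}=e_{12}$ holds precisely when $x=e_{21}$ forces $u^{(i)}v^{(i)}w^{(i)}=e_{21}$ in every ordering, and hence, by the remark about multisets, exactly two of $u^{(i)},v^{(i)},w^{(i)}$ equal $1$ and the third equals $e_{21}$. Define $\gamma\colon V\to\{0,1\}$ by $\gamma(v)=1$ if $v^{(i)}=1$ and $\gamma(v)=0$ otherwise. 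Then on every hyperedge of $\mathcal{H}$ the value $1$ is taken exactly twice and the value $0$ exactly once, so $\gamma$ is a majority $2$-colouring of $\mathcal{H}$, as required.

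The substantive point, where the argument follows that of \cite[Lemma~7.2]{jck}, is the production of a single active coordinate witnessing all hyperedges in all orderings, so that the local patterns above fit together into one function $\gamma$. The coordinate projections $\phi_i$ are not individually constrained by $\rho$, so this has to be argued globally, via the interplay of the ideal $\rho^{-1}(0)$ of $R$ with the idempotents of $M_\mathcal{H}$: by Lemma~\ref{lem:idempotent}, each hyperedge $e$ of $\mathcal{H}$ yields the eight pairwise orthogonal nonzero idempotents $\overline u\,\st\,\overline v$ of $M_\mathcal{H}$ with $u\sqcup v=e$ (among them $\st\se$ and $\se\st$), whereas $\mathbf{B}_2^1$ admits at most two pairwise orthogonal nonzero idempotents; lifting such configurations into the finite semigroup $R$ and following them coordinate-by-coordinate, together with the vanishing relations $\st u\st=\st uv\st=uu=0$, the commutativity of the vertices, and the girth hypotheses of Lemma~\ref{lem:girth} (which give, for example, $\st u'vw\st=0$ whenever $\{u',v,w\}\notin E$), isolates an active coordinate of the kind needed. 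Equivalently, one can proceed by contraposition: from the absence of a majority $2$-colouring of $\mathcal{H}$ one manufactures an identity holding in $\mathbf{B}_2^1$ but failing in $M_\mathcal{H}$. Either way the hypothesis $\girth(\mathcal{H})\geq4$, via Lemma~\ref{lem:girth}, is what makes the reductions above legitimate.
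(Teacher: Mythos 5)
Your setup (subdirect representation, fixing preimages $\hat{\st}$ and $\hat v$, and reading off a majority $2$-colouring from a single coordinate where $\hat{\st}$ takes a non-idempotent value) matches the paper's, and the arithmetic you record for $\mathbf B_2^1$ is correct; in particular the observation that the only multiset with product $e_{21}$ under every ordering is $\{1,1,e_{21}\}$ is exactly what yields the majority pattern once a suitable coordinate is found.

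The genuine gap is the step you yourself flag as ``the crux'': you never actually produce a single coordinate $i$ at which $\st^{(i)}u^{(i)}v^{(i)}w^{(i)}\st^{(i)}\neq 0$ holds \emph{simultaneously for every hyperedge}. The difficulty is that $\hat{\st}\hat u\hat v\hat w\hat{\st}$ is merely some preimage of $\st$, so a priori it can differ from $\hat{\st}$ coordinate-by-coordinate, and the active coordinates witnessing different hyperedges could be disjoint. Your two sketched strategies (tracking orthogonal idempotents, or a contrapositive manufacture of a separating identity) are not carried out, and the orthogonal-idempotent count in particular does not obviously force a common coordinate. The paper closes this gap with a short and decisive device you did not identify: choose $\hat{\st}$ to be a $\mathscr{J}$-minimal element of $\eta^{-1}(\st)$. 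Since $\hat{\st}\hat u\hat v\hat w\hat{\st}\leq_{\mathscr{J}}\hat{\st}$ and both lie in $\eta^{-1}(\st)$, $\mathscr{J}$-minimality gives $\mathscr{J}$-equivalence, hence equality (stability plus $\mathscr{H}$-triviality in a power of $\mathbf B_2^1$). With $\hat{\st}\hat u\hat v\hat w\hat{\st}=\hat{\st}$ holding on the nose for every hyperedge, \emph{any} coordinate $i$ with $\hat{\st}(i)\in\{a,b\}$ works globally, and such an $i$ exists because $\st^2=0\neq\st$. Without this (or some equivalent mechanism) your proof does not go through.
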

\begin{proof}
Assume there is $n\in \mathbb{N}$ and ${\bf M}\leq ({\bf B}_2^1)^n$ is such that there exists a surjective homomorphism $\eta$ from ${\bf M}$ to $M_\mathcal{H}$.  Let $\hat{\st}$ denote a $\mathscr{J}$-minimal member of $\eta^{-1}(\st)$.  For each $u\in V$, let $\hat{u}$ be a fixed element of $\eta^{-1}(u)$.  Also, for any hyperedge $\{u,v,w\}$ we have $\hat{\st}\hat{u}\hat{v}\hat{w}\hat{\st}\leq_\mathscr{J}\hat{\st}$.  As $\hat{\st}\hat{u}\hat{v}\hat{w}\hat{\st}\in\eta^{-1}(\st u v w\st)=\eta^{-1}(\st)$, the $\mathscr{J}$-minimality of $\hat{\st}$ ensures that $\hat{\st}\hat{u}\hat{v}\hat{w}\hat{\st}=\hat{\st}$.  (Note that in general we may only have $\mathscr{H}$-relatedness rather than equality, but this semigroup has trivial $\mathscr{H}$ relation.)

Now $\st^2=0$ and so it follows that there is $i\leq n$ such that $\hat{\st}(i)\in\{a,b\}$.  Up to symmetry we assume without loss of generality that $\hat{\st}(i)=b$.  Now for each $u\in V$ we have $\hat{u}(i)\in\{1,a\}$, because $\hat{\st}\hat{u}$ and $\hat{u}\hat{\st}$ are in the same $\mathscr{J}$-class as $\hat{\st}$.  Define $\nu:V\to\{0,1\}$ by $\nu(u)=1$ if $\hat{u}(i)=1$ and $\nu(u)=0$ if $\hat{u}(i)=a$.  Now for each hyperedge $\{u,v,w\}$, as $(\hat{\st}(i))^2=b^2=0$ but $\hat{\st}(i)\hat{u}(i)\hat{v}(i)\hat{w}(i)\hat{\st}(i)=\hat{\st}(i)=b$, it follows that exactly one vertex $x\in \{u,v,w\}$ has $\hat{x}(i)=a$, with the remaining two vertices in $\{u,v,w\}\backslash\{x\}$ having value $1$ at $i$.  In other words, for each hyperedge, exactly one vertex has $\nu$-value $0$ and exactly two have $\nu$-value $1$.  In other words, $\mathcal{H}$ has a majority $2$-colouring.
\end{proof}
The following lemma comes from \cite[Lemma 7.1]{jck}, but we give a sketch proof here for completeness.
\begin{lem}\label{lem:flexB21}
Assume that $\mathcal{H}$ is a 3-hypergraph with the following properties\up:
\begin{enumerate}
\item if $u\neq v$ then there are majority $2$-colourings $\gamma_1,\gamma_2,\gamma_3$ with $\{(\gamma_i(u),\gamma_i(v))\mid i=1,2,3\}=\{(1,1),(0,1),(1,0)\}$\up;
\item if $u\neq v$ and no hyperedge extends $\{u,v\}$ then there is a majority $2$-colouring $\gamma$ with $(\gamma(u),\gamma(v))=(0,0)$.
\end{enumerate}
Then $M_\mathcal{H}\in \mathbb{B}_2^1$.
\end{lem}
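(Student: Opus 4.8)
The plan is to imitate the proof of Lemma~\ref{lem:A21}: realize $M_\mathcal{H}$ as a Rees quotient of a submonoid of a power of $\mathbf{B}_2^1$. Take the index set $J$ to be the set of all majority $2$-colourings of $\mathcal{H}$, which is nonempty by hypothesis~(1), and work inside $(\mathbf{B}_2^1)^J$. Let $\hat{\st}$ be the constant tuple with value $\ssb$ at every coordinate, and for each $u\in V$ let $\hat u$ be the tuple whose value at a colouring $\gamma$ is $1$ if $\gamma(u)=1$ and $\sa$ if $\gamma(u)=0$. Let $T\le(\mathbf{B}_2^1)^J$ be the submonoid generated by $1$, $\hat{\st}$ and the $\hat u$ ($u\in V$), and let $I$ be the ideal of $T$ consisting of the tuples having a zero coordinate. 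The claim is that $\st\mapsto\hat{\st}$, $u\mapsto\hat u$ induces an isomorphism $M_\mathcal{H}\cong T/I$; since $T/I$ is a homomorphic image of $T\le(\mathbf{B}_2^1)^J$, this puts $M_\mathcal{H}$ in $\mathbb{B}_2^1$.

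The first step is to verify that the defining relations (1)--(8) of $M_\mathcal{H}$ are respected modulo $I$, so that the assignment extends to a (necessarily surjective) homomorphism $\pi\colon M_\mathcal{H}\to T/I$. Every check is at a single coordinate $\gamma$ and uses only: the identities $\ssb^2=0$ and $\ssb\sa\ssb=\ssb$ in $\mathbf{B}_2^1$; that $\hat u(\gamma)$ and $\hat v(\gamma)$ always commute (giving~(2)); and the defining feature of a majority colouring, namely that at a hyperedge $\{u,v,w\}$ exactly two of $\hat u(\gamma),\hat v(\gamma),\hat w(\gamma)$ equal $1$ and one equals $\sa$. The last point makes $\hat u\hat v\hat w$ the constant tuple $\sa$ for every hyperedge (this gives~(6) and~(7), and $\hat{\st}\hat u\hat v\hat w\hat{\st}=\hat{\st}$ on the nose, which is~(5)); the relations $\st^2=0$ and $\se\st\se=\se$ hold literally. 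For the relations equating a product to $0$ --- $uu=0$, $\st u\st=\st uv\st=0$, and $uv=0$ when $\{u,v\}$ extends to no hyperedge --- one only needs a colouring at which the relevant tuple acquires a zero coordinate: hypothesis~(1) supplies a majority colouring with $\gamma(u)=1$ (resp.\ with $\gamma(u)=\gamma(v)=1$) for the first three, and hypothesis~(2) supplies, for non-adjacent $u\ne v$, a majority colouring with $\gamma(u)=\gamma(v)=0$. For~(8): if $\{u,v\}\equiv\{u',v'\}$ with neither extending to a hyperedge then both sides lie in $I$; otherwise both extend, necessarily (Lemma~\ref{lem:girth}(I)) through a common vertex $w$, and a short calculation shows $\hat u\hat v(\gamma)$ depends only on $\gamma(w)$, whence $\hat u\hat v=\hat{u'}\hat{v'}$.

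The substantive step is the injectivity of $\pi$, and this is exactly where the ``flexibility'' hypotheses are needed. By the normal forms of Lemmas~\ref{lem:normalform1} and~\ref{lem:normalform2}, every nonzero element of $M_\mathcal{H}$ is either ``$\st$-free'' of the form $\overline{s}$ (so $1$, a vertex, a pair class, or $\se$) or of the form $\overline{s_0}\st\overline{s_k}$. One checks that $\hat{\overline{s}}(\gamma)\in\{1,\sa\}$ always (so $\st$-free elements never map into $I$); that $\hat 1$ and $\hat{\se}$ are the constant tuples $1$ and $\sa$; that a vertex $u$ is detected through $\gamma(u)$ and a pair class through its companion vertex $w$ via $\gamma(w)$; and hence, invoking hypothesis~(1) to produce colourings separating any two vertices, any two pair classes, or a vertex from a pair class, that $\pi$ is injective on the $\st$-free elements. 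For $x=\overline{s_0}\st\overline{s_k}$ the coordinate value is $\hat{\overline{s_0}}(\gamma)\,\ssb\,\hat{\overline{s_k}}(\gamma)$, and since $(p,q)\mapsto p\,\ssb\,q$ is a bijection from $\{1,\sa\}^2$ onto $\{\ssb,\ssb\sa,\sa\ssb,\sa\}$, such $x$ never lies in $I$, is distinguished from every $\st$-free element (at a coordinate where one of $\hat{\overline{s_0}}(\gamma),\hat{\overline{s_k}}(\gamma)$ equals $1$ --- which exists because a $\st$-bearing normal form has at least one of $s_0,s_k$ of size $\le 2$ --- the value of $\hat x$ lands outside $\{1,\sa\}$), and the tuples $\hat{\overline{s_0}}$ and $\hat{\overline{s_k}}$ are recoverable from $\hat x$, reducing the remaining separations to the $\st$-free case. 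Thus $\pi$ is a bijection, $M_\mathcal{H}\cong T/I$, and $M_\mathcal{H}\in\mathbb{B}_2^1$.

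I expect the main obstacle to be the injectivity verification of the third step: not any single computation, but the bookkeeping of matching each type of pair of distinct normal-form elements of $M_\mathcal{H}$ with a majority $2$-colouring of $\mathcal{H}$ that tells them apart. Hypotheses~(1) and~(2) are calibrated precisely to guarantee that such a colouring is always available, so the difficulty is organizational rather than conceptual; by comparison, the construction of the tuples and the verification of relations~(1)--(8) are routine.
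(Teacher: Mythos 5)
Your proposal is correct and follows essentially the same route as the paper, which gives exactly this construction (power of $\mathbf{B}_2^1$ indexed by the majority $2$-colourings, the same tuples $\hat{\st}$ and $\hat u$, and the Rees quotient by the ideal of tuples with a zero coordinate) and then declares the remaining verification routine; you simply supply those details. One trivial slip: to place $\hat u\hat u$ in $I$ you need a colouring with $\gamma(u)=0$ (so that the coordinate is $\sa^2=0$), not $\gamma(u)=1$, and hypothesis (1) supplies such a colouring as well, so nothing is affected.
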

\begin{proof}
Let $P$ denote the set of all majority $3$-colourings of $\mathcal{H}$ and let $T$ denote the subsemigroup of $({\bf B}_2^1)^P$ generated by
$\hat{\st}(\gamma)=b$ and
\[
\hat{u}(\gamma)=\begin{cases}
\gamma(u)&\text{ if $\gamma(u)=1$}\\
a&\text{ if $\gamma(u)=0$}
\end{cases}
\] for each $\gamma\in P$ and $u\in V$.  Let $I$ be the ideal of $T$ generated by all elements $x$ of $T$ for which there exists $\gamma\in P$ with $x(\gamma)=0$.  Then $T/I$ is isomorphic to $M_\mathcal{H}$ under the map that sends $\hat{\st}$ to $\st$ and each $\hat{u}$ to $u$.  The details from here are completely routine and are omitted.
\end{proof}
The following lemma will be used later, and gives a broad class of $3$-uniform hypergraphs that are majority $2$-colourable in enough ways to satisfy both conditions in Lemma~\ref{lem:flexB21}.  Cycle-free $3$-uniform hypergraphs are also known as \emph{$3$-uniform hyperforests}.
\begin{lem}\label{lem:cyclefree}
Let $\mathcal{H}$ be a cycle-free $3$-uniform hypergraph.  Then $\mathcal{H}$ satisfies conditions \up(1\up) and \up(2\up) of Lemma \ref{lem:flexB21}, so that $M_\mathcal{H}\in\mathbb{B}_2^1$.
\end{lem}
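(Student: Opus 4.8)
The plan is to verify conditions (1) and (2) of Lemma~\ref{lem:flexB21} directly, exploiting the tree-like structure of a $3$-uniform hyperforest to build majority $2$-colourings with prescribed behaviour on a chosen pair of vertices. The key structural fact is that in a cycle-free $3$-uniform hypergraph, any two hyperedges share at most one vertex, and the ``incidence bipartite graph'' (with vertices on one side and hyperedges on the other, edges recording membership) is a forest in the usual graph-theoretic sense; this is precisely what cycle-freeness of $\mathcal{H}$ says. Consequently, one can colour vertices greedily, processing the forest component by component and hyperedge by hyperedge along a spanning tree of the incidence graph, always having enough freedom to complete each partially-coloured hyperedge to one with exactly one $0$ and two $1$'s.

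First I would set up the greedy/inductive colouring procedure precisely. Fix a target pair $\{u,v\}$ and a desired pattern from $\{(1,1),(0,1),(1,0),(0,0)\}$ (with $(0,0)$ only demanded when no hyperedge extends $\{u,v\}$). Root each tree of the incidence forest, placing $u$ and $v$ near the root, and assign them their prescribed colours. Then extend the colouring by moving outward: whenever we reach a hyperedge $e=\{a,b,c\}$ at most one of whose vertices is already coloured (which is the case along a tree traversal of the incidence graph, since a hyperedge is adjacent in the incidence forest to its previously-visited ``parent'' vertex only), we choose colours for the uncoloured vertices of $e$ so that $e$ gets exactly two $1$'s and one $0$ --- this is always possible for any forced value of one coordinate. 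New vertices thereby coloured become available to seed further hyperedges. Since $\mathcal{H}$ has no isolated points are irrelevant here (we may simply leave any stray isolated vertices coloured $1$), and every hyperedge is visited exactly once in the traversal, the result is a genuine majority $2$-colouring realising the prescribed values on $\{u,v\}$.

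For condition (1), given $u\neq v$ I would produce three such colourings realising $(1,1)$, $(0,1)$ and $(1,0)$ on $(u,v)$. If $u$ and $v$ lie in a common hyperedge $e=\{u,v,w\}$ (at most one such $e$ by cycle-freeness), then I root the incidence tree at $e$: colouring $(u,v,w)$ as $(1,1,0)$, $(0,1,1)$, and $(1,0,1)$ respectively makes $e$ majority-coloured in each case, and the extension procedure above completes the rest. If $u$ and $v$ lie in no common hyperedge, I colour them independently in whatever forest components/positions they occupy, fix their values according to the target, and extend; the absence of a shared hyperedge means there is no constraint linking the two chosen values. For condition (2), when no hyperedge extends $\{u,v\}$ I simply fix $\gamma(u)=\gamma(v)=0$ and run the extension; again no hyperedge is simultaneously constrained at $u$ and $v$, so nothing obstructs completing each hyperedge to a majority pattern. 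Having verified (1) and (2), Lemma~\ref{lem:flexB21} yields $M_\mathcal{H}\in\mathbb{B}_2^1$.

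The main obstacle is making the greedy extension argument rigorous rather than hand-wavy: one must be careful that when the traversal of the incidence forest reaches a hyperedge, \emph{exactly one} (or zero) of its vertices is already coloured, so that the ``two $1$'s and one $0$'' completion is genuinely unconstrained. This is exactly where cycle-freeness is used --- if a hyperedge could be reached with two of its vertices already coloured via two independent paths, those paths would close up a cycle in $\mathcal{H}$. I expect to phrase this as an induction on the number of hyperedges, or equivalently on the size of a spanning forest of the incidence graph, peeling off a leaf hyperedge at each step. The rest --- that the constructed function is a homomorphism to the majority template, and that Lemma~\ref{lem:flexB21}'s hypotheses are literally what we have checked --- is routine, and I would state it briefly and omit the bookkeeping.
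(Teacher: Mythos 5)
Your overall strategy (exploit acyclicity of the incidence graph, traverse or peel off leaf hyperedges) is in the same spirit as the paper's induction, but the central step of your greedy argument has a genuine gap. You claim that along a rooted traversal of the incidence forest every hyperedge is reached with at most one of its vertices already coloured, and you justify this by saying that two already-coloured vertices would close a cycle. That justification ignores the fact that the two target vertices $u,v$ are coloured \emph{before} the traversal starts. If $u$ and $v$ lie in the same component but in no common hyperedge, you cannot place both near the root; the hyperedge of which the deeper target (say $v$) is a child is then reached with \emph{two} coloured vertices, namely its parent vertex and $v$ itself, and no cycle is involved. Moreover this is not mere bookkeeping: a blind greedy can deadlock. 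Take hyperedges $\{u,a,w\}$ and $\{w,b,v\}$ and the pattern $(\gamma(u),\gamma(v))=(1,0)$, which condition (1) requires even though $u,v$ share no hyperedge; rooting at $u$, the greedy may legitimately colour $a\mapsto 1$, $w\mapsto 0$ on the first hyperedge, and then $\{w,b,v\}$ carries $w=0$ and $v=0$ and cannot be completed to two $1$'s and one $0$. So your assertion that ``the absence of a shared hyperedge means there is no constraint linking the two chosen values'' is false: constraints propagate along the unique $u$--$v$ path in the incidence forest, and the argument must control the choices made along that path.

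The gap is fixable, but it needs an idea you have not stated. One option is to add the invariant that every intermediate vertex on the $u$--$v$ path receives colour $1$: this can always be arranged, because when the parent vertex of a hyperedge is coloured $0$ the two remaining vertices are forced to $1$, and when it is coloured $1$ you are free to choose which of the two children receives $0$, so you can keep the on-path child at $1$ until the hyperedge containing the prescribed target is reached. The other option is the paper's route: induct on the number of hyperedges with the \emph{full} statement (conditions (1) and (2) for all pairs) as induction hypothesis, remove a leaf hyperedge $e=\{u,v,w\}$ together with its two private vertices, and use the inductive flexibility --- majority $2$-colourings of the smaller hypergraph giving the attachment vertex either value --- to extend to the deleted vertices in all required patterns. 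Without one of these devices the extension step of your argument does not go through as written.
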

\begin{proof}
The proof is by induction on the number of hyperedges.  The statement is clearly true if there are no hyperedges or if there is a single hyperedge.  Now assume that the statement is true when for all cycle-free $3$-uniform hypergraphs with at most $k$ hyperedges and consider the case where $\mathcal{H}=(V,E)$ has $k+1$ distinct hyperedges.  Note that being cycle free ensures that two hyperedges overlap in at most one vertex.  As $\mathcal{H}$ has only finitely many hyperedges ($k+1$ in fact) and no cycles, we can find a hyperedge $e=\{u,v,w\}\in E$ with at most one vertex from $u,v,w$ appearing in any hyperedge in $E\backslash\{e\}$.  First assume that $u$ lies in at least one hyperedge in $E\backslash\{e\}$; so, vertices $v$ and $w$ do not lie in any other hyperedge.  The induced sub-hypergraph on $V\backslash\{v,w\}$ satisfies the conditions of Lemma \ref{lem:flexB21}.  In particular there is a majority $2$-colouring $\gamma_1$ of this subgraph that gives $u$ the colour $0$ and a majority $2$-colouring $\gamma_2$ that gives $u$ the value $1$.  Then $\gamma_1$ can be extended to $v,w$ by giving them both the value $(1,1)$, and $\gamma_2$ may be extended to colour $(v,w)$ by either of $(0,1)$ and $(1,0)$.  Note that in its extended form, $\gamma_1$ also colours $(u,v)$ and $(u,w)$ by $(0,1)$, while the two described extensions of $\gamma_2$ colour $(u,v)$ and $(u,w)$ by both $\{(1,1),(1,0)\}$.  Thus each pair from $u,v,w$ satisfies the conditions of Lemma~\ref{lem:flexB21}.  It now suffices to show that for any vertex $u'\in V\backslash\{u,v,w\}$ that the pair $(u',v)$ can be coloured in all of the four possible ways: $(0,0),(0,1),(1,0),(1,1)$.  First observe that $(u',u)$ may be coloured $(0,1),(1,0),(1,1)$, and possibly $(0,0)$ (unless $\{u,u'\}$ extends to some hyperedge in $E\backslash\{E\}$).  If $\gamma$ is a majority $2$-colouring of $(V\backslash\{v,w\},E\backslash\{e\})$ giving  $(u',u)$ the pair of colours $(0,1)$ or $(1,1)$, then $\gamma$ can be extended to a majority $2$-colouring of $\mathcal{H}$ giving $v$ either value $0$ or $1$ (as $u$ is taking the value $1$).  This gives all combinations $(u',v)\in\{(0,0),(0,1),(1,0),(1,1)\}$, as required.  This completes the inductive step when $e$ shares a vertex in common with some other hyperedge in $E$.  The case where $e$ shares no vertices is very similar though easier and we omit the details.
\end{proof}

\section{Proof of Theorem \ref{thm:AB}}
In this section we prove Theorem \ref{thm:AB} by adapting an idea from \cite[\S3.5]{jckmck}: constructing an identity around any hypergraph $\mathcal{G}$ and whose satisfaction by $M_\mathcal{H}$ will relate to the homomorphism properties of $\mathcal{G}$ and $\mathcal{H}$.
For any 3-hypergraph $\mathcal{H}=(V,E)$ of girth at least $4$ we define a word $p_\mathcal{H}$ as follows.  First, the variables appearing in $p_\mathcal{H}$ will be a variable $y$ along with a variable $x_u$ for each $u\in V$.  Now let $\bw_1,\bw_2,\dots,\bw_n$ be a sequence of words $x_ux_vx_w$, satisfying the following conditions.
\begin{enumerate}
\item[($\alpha$)] $x_ux_vx_w$ appears in the list if and only if $\{u,v,w\}\in E$ (in particular, all permutations of $x_ux_vx_w$ appear in the list).  Note that $x_ux_vx_w$ may appear more than once in the list.
\item[($\beta$)] for every pair of vertices $u,v$ (including when $u=v$), there is an $i\leq n-1$ such that $\bw_i$ finishes with $x_u$ and $\bw_{i+1}$ starts with $x_v$.
\item[($\gamma$)] for every hyperedge $\{u,v,w\}$ and every vertex $u'\notin\{u,v,w\}$, there is $j$ such that $\bw_j$ is  $x_ux_vx_w$, and such that $\bw_{j-1}$ ends in $x_{u'}$ while $\bw_{j+1}$ begins with $x_{u'}$.
\end{enumerate}
Provided that each vertex of $\mathcal{H}$ is contained within a hyperedge (which we will assume throughout), it is clear that such lists exist: if $L$ is a list of all $6|E|$ possible triples $x_ux_vx_w$ for hyperedge $\{u,v,w\}$, then the list obtained by concatenating all possible permutations of $L$ has this property.
Now let $p_\mathcal{H}$ be the word $\prod_{1\leq i\leq n}(y\bw_i)^2y$.

\begin{lem}\label{lem:HnotpH}
For any hypergraph $\mathcal{G}$ we have $M_\mathcal{G}\not\models p_\mathcal{G}\approx p_\mathcal{G}^2$.
\end{lem}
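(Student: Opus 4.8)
The plan is to exhibit one substitution of the variables of $p_\mathcal{G}$ into $M_\mathcal{G}$ that sends $p_\mathcal{G}$ to a non-idempotent element of $M_\mathcal{G}$; since $\varphi(p_\mathcal{G}^2)=\varphi(p_\mathcal{G})^2$ for any substitution $\varphi$, this immediately yields $M_\mathcal{G}\not\models p_\mathcal{G}\approx p_\mathcal{G}^2$. The substitution I would use is the tautological one, coming from the identity homomorphism $\mathcal{G}\to\mathcal{G}$: set $\varphi(y):=\st$ and $\varphi(x_u):=u$ for every $u\in V$ (all of $\st$ and the $u$ being generators of $M_\mathcal{G}$).

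The computation should run as follows. By $(\alpha)$ each $\bw_i$ has the form $x_ux_vx_w$ with $\{u,v,w\}\in E$, so by rule (6) (or Lemma \ref{lem:normalform1}) $\varphi(\bw_i)=uvw=\se$. The word $p_\mathcal{G}$ is an alternating concatenation $y\bw_{i_1}y\bw_{i_2}y\cdots y\bw_{i_m}y$ of single occurrences of $y$ with triple-words $\bw_i$, beginning and ending with $y$; hence $\varphi(p_\mathcal{G})=\st\se\st\se\cdots\se\st$, with $\st$ at each end. Now I would invoke Lemma \ref{lem:normalform1}: this image word is already in the displayed normal form there with $\mathsf{s}_0=\mathsf{s}_k=1$, so it is nonzero and reduces to $\st$. (Concretely the collapse is driven by $\se\st\se=\se$ and $\st\se\st=\st$, i.e.\ rules (7) and (5).) Thus $\varphi(p_\mathcal{G})=\st$.

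To finish, rule (1) gives $\varphi(p_\mathcal{G})^2=\st^2=0$, while $\varphi(p_\mathcal{G})=\st\neq0$ in $M_\mathcal{G}$ (again Lemma \ref{lem:normalform1}), so $\varphi$ witnesses the claim. The one place where some care is needed — and really the only content of the argument — is verifying that $\varphi(p_\mathcal{G})$ does not collapse to $0$ on the way to $\st$: this amounts to checking that consecutive occurrences of $\st$ in the image word are always separated by a full hyperedge triple (so that rule (5) applies rather than a zero-producing instance $\st^2$, $\st u\st$ or $\st uv\st$ of rule (1)), which is guaranteed by $(\alpha)$ together with the fact that $p_\mathcal{G}$ has no two adjacent occurrences of $y$. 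Note that conditions $(\beta)$ and $(\gamma)$ are not needed for this lemma; they enter only in the opposite direction, when one shows that $p_\mathcal{G}\approx p_\mathcal{G}^2$ \emph{does} hold in $M_\mathcal{H}$ for appropriate $\mathcal{H}$.
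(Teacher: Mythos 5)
Your proposal is correct and is essentially the paper's own proof: the paper uses exactly the same substitution $y\mapsto\st$, $x_u\mapsto u$ and observes that the left-hand side evaluates to $\st$ while the square is $\st^2=0$. You merely spell out the reduction to $\st$ via rule (5) and Lemma \ref{lem:normalform1} (rule (7) is not actually needed), which the paper leaves implicit.
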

\begin{proof}
Simply assign $y$ the value $\st$ and $x_u$ the value $u$; then the left hand side takes the value $\st$ while the right hand side takes the value $0$.
\end{proof}

In the case of the semigroup construction $S_\mathcal{H}$, it is possible to show that failure of the law $p_\mathcal{G}\approx p_\mathcal{G}^2$ precisely captures the existence of a homomorphism from $\mathcal{G}$ to $\mathcal{H}$ (at least assuming that $\mathcal{G},\mathcal{H}$ are not majority $2$-colourable and have girth at least $4$).  This claim is not used in the paper and we omit the proof, though we note it is very similar but easier than the proof Lemma \ref{lem:homommon} below: simply omit all cases where the element $1$ is considered.  In the monoid case we need much stronger assumptions on our hypergraphs to achieve a comparably useful result.

We will use the notation $\wp_{\leq i}(S)$ to denote the set of all subsets of a set $S$ with at most $i$ elements.  Let us say that two $3$-uniform hypergraphs $\mathcal{G}=(V_G,E_G)$ and $\mathcal{H}=(V_H,E_H)$ are \emph{wildly incomparable} if the following property holds:
\begin{itemize}
\item both $\mathcal{G}$ and $\mathcal{H}$ have girth at least $4$ and are not $5$-colourable; and either
\begin{itemize}
\item the girth of $\mathcal{G}$ is greater than $3|V_H|+1$ and its chromatic number is more than $|\wp_{\leq 3}(V_H)|+1$; or
\item the girth of $\mathcal{H}$ is greater than $3|V_G|+1$ and its chromatic number is more than $|\wp_{\leq 3}(V_G)|+1$.
\end{itemize}
\end{itemize}
Theorem \ref{thm:EH} can be used to construct an infinite family of pairwise wildly incomparable hypergraphs.  Starting from any $3$-uniform
hypergraph~$\mathcal{G}_0$ of girth at least~$4$ and with chromatic number more than~$5$, inductively construct $\mathcal{G}_{i+1}$ using Theorem~\ref{thm:EH} by setting $k:=|\wp_{\leq 3}(V_{G_i})|+2$ and $\ell=3|V_{G_i}|+2$.  Of course, the growth in size of these hypergraphs is prodigious, and the identities $p_{\mathcal{G}_i}\approx p_{\mathcal{G}_i}^2$ as constructed above are larger still.

\begin{lem}\label{lem:homommon}
Let $\mathcal{G}=(V_G,E_G)$ and $\mathcal{H}=(V_H,E_H)$  be two wildly incomparable 3-hypergraphs.  Then $M_\mathcal{H}\models p_\mathcal{G}\approx p_\mathcal{G}^2$.
\end{lem}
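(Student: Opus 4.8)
The plan is to show that any assignment of the variables of $p_\mathcal{G}$ into $M_\mathcal{H}$ produces the same value for $p_\mathcal{G}$ and for $p_\mathcal{G}^2$, by a case analysis on how the substituted values interact with the normal form from Lemma~\ref{lem:normalform1}. Write $\phi$ for the substitution, and set $\st^* := \phi(y)$ and $u^* := \phi(x_u)$ for each $u \in V_G$. The first reduction is to observe that if the value of $p_\mathcal{G}$ under $\phi$ is $0$, then $p_\mathcal{G}^2$ is $0$ as well (since $0$ is an ideal), and we are done; similarly if the value is $1$ then both are $1$. So assume the common value of $p_\mathcal{G}$ is a nonzero, non-identity element, and we must show $p_\mathcal{G}^2$ takes the same value, i.e.\ that the value of $p_\mathcal{G}$ is idempotent. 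By Lemma~\ref{lem:idempotent}, the nonzero non-identity idempotents of $M_\mathcal{H}$ are exactly the $\overline{u}\,\st\,\overline{v}$ with $u\cap v=\varnothing$ and $u\cup v\in E_H$, so the real task is to show that whenever $p_\mathcal{G}$ evaluates to a nonzero non-identity element, that element is of this form.

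The heart of the argument is to use the structural constraints ($\alpha$), ($\beta$), ($\gamma$) built into the word $p_\mathcal{G}$ together with the largeness hypotheses in ``wildly incomparable'' (high girth, high chromatic number, non-$5$-colourability of both $\mathcal{G}$ and $\mathcal{H}$). The idea: because $p_\mathcal{G}=\prod_i (y\bw_i)^2 y$ contains every permutation of every hyperedge-triple of $\mathcal{G}$, packed between copies of $y$, a nonzero evaluation in $M_\mathcal{H}$ forces (via Lemma~\ref{lem:normalform1}) that for each hyperedge-triple $x_ux_vx_w$ of $\mathcal{G}$, the product $\st^*\,u^*v^*w^*\,\st^*$ is nonzero, which by the normal form means $\st^*$ must ``absorb'' each such triple; tracking which generators appear with value $1$ versus with a genuine vertex- or edge-value at the relevant coordinate of a subdirect representation yields a colouring-type partition of $V_G$. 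The non-$5$-colourability and high chromatic number of $\mathcal{G}$ then contradict the existence of a ``small'' such partition unless the evaluation degenerates — forcing each $u^*$ to behave like $1$ or forcing $\st^*$ to behave trivially — and in the surviving cases one checks directly from Lemma~\ref{lem:normalform2} that the value of $p_\mathcal{G}$ is one of the idempotents $\overline{u}\st\overline{v}$, $0$, or $1$. Condition ($\gamma$) (each hyperedge-triple flanked by each external vertex) and the girth bound (larger than $3|V_H|+1$) are used precisely to rule out the possibility that $\st^*$ spreads a long nontrivial ``chain'' $\mathsf{s}_0\st\se_1\st\cdots\st\mathsf{s}_k$ with $k\ge 1$: such a chain, read back through ($\beta$)–($\gamma$), would trace out a short cycle in $\mathcal{H}$ or a homomorphism from a large-girth, high-chromatic $\mathcal{G}$-gadget into the small $\mathcal{H}$, both impossible.

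I expect the main obstacle to be the careful bookkeeping in the coordinate-by-coordinate analysis: one must argue that in \emph{any} subdirect decomposition of the submonoid generated by the $u^*$ and $\st^*$ inside a power of $M_\mathcal{H}$, the ``pattern'' of $1$'s versus vertex/edge-values across coordinates induces a genuine colouring of (a subhypergraph of) $\mathcal{G}$, and then to extract from non-$5$-colourability and the chromatic/girth gaps the right contradiction. This is where the asymmetric clause in ``wildly incomparable'' — girth of one exceeding $3|V|$ of the other, chromatic number exceeding $|\wp_{\le 3}(V)|+1$ — gets deployed: the $3$ and the $\wp_{\le 3}$ are exactly the ``at most three vertices between consecutive $\st$'s'' and ``the $\se_i$ are determined by hyperedges, i.e.\ by $\le 3$-subsets'' phenomena from Lemma~\ref{lem:normalform1}. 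Once the structural dichotomy is in place, the verification that the surviving evaluations give idempotents is a routine application of Lemmas~\ref{lem:normalform1}, \ref{lem:normalform2}, and \ref{lem:idempotent}, parallel to (but more delicate than) the semigroup case alluded to before the statement. Finally, combining this with Lemma~\ref{lem:HnotpH} gives that $M_j \models p_{\mathcal{G}_i}\approx p_{\mathcal{G}_i}^2$ iff $i\ne j$ once we set $M_i := M_{\mathcal{G}_i}$, $\mathbf{p}_i:=p_{\mathcal{G}_i}$, $\mathbf{q}_i:=p_{\mathcal{G}_i}^2$, completing step (3) of the proof scheme for Theorem~\ref{thm:AB}.
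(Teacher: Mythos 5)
Your high-level plan (reduce to showing that any nonzero value of $p_\mathcal{G}$ under an assignment $\theta$ is idempotent; derive a colouring of $\mathcal{G}$ that contradicts the largeness hypotheses) is the right skeleton, but the proposal has a genuine confusion and several missing ideas that are exactly the nontrivial content of the lemma.

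The confusion: the ``tracking\ldots\ at the relevant coordinate of a subdirect representation'' framing plays no role here. The paper's argument operates entirely inside $M_\mathcal{H}$ via the normal form of Lemma~\ref{lem:normalform1}; no subdirect decomposition is invoked. You seem to have imported that device from the proof of Lemma~\ref{lem:B21}, where an embedding into a power of $\mathbf{B}_2^1$ is genuinely central; here the ``coordinates'' are replaced by a direct partition of $V_G$. Relatedly, your picture of the girth bound ruling out ``a long nontrivial chain $\mathsf{s}_0\st\se_1\st\cdots\st\mathsf{s}_k$'' is off: Lemma~\ref{lem:normalform1} already forces any nonzero such product to collapse to $\mathsf{s}_0\st\mathsf{s}_k$ independently of any girth hypothesis, so that is not where the girth assumption enters.

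The gaps: the argument has a two-stage structure you do not identify. One first partitions $V_G$ into $V_1$ (vertices with $\theta(x_u)=1$), $V_\natural$ (vertices whose image is a nonempty product of vertex-generators of $M_\mathcal{H}$) and $V_\st$ (vertices whose image is $\mathscr{J}$-related to $\st$), and then shows that $\theta(y)$ cannot itself be a product of vertex-generators: otherwise the adjacency conditions $(\beta)$, $(\gamma)$ built into $p_\mathcal{G}$, together with rule~(4) and Lemma~\ref{lem:girth}(III), force every hyperedge of $\mathcal{G}$ to have two vertices in $V_1$ and one in $V_\st\cup V_\natural$, which is a majority $2$-colouring of $\mathcal{G}$ and therefore impossible since $\mathcal{G}$ is not even $5$-colourable. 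Only after that is it legitimate to assume $\theta(y)=\overline{s}_0\,\st\,\overline{s}_1$, and at that point one splits on the asymmetric clause in the definition of ``wildly incomparable.'' In the stream where $\mathcal{H}$ has girth exceeding $3|V_G|+1$, the decisive and unstated step is to look at the set of vertices of $\mathcal{H}$ arising as images of $V_\natural$: it has at most $|V_G|$ elements, so the girth bound makes the induced sub-hypergraph of $\mathcal{H}$ a hyperforest, and Lemma~\ref{lem:cyclefree} then provides a majority $2$-colouring that pulls back to a $5$-colouring of $\mathcal{G}$. In the other stream the colouring is by $V_1$, $V_\st$ and one block $V_s$ per nonempty subset $s$ of a hyperedge of $\mathcal{H}$, so the chromatic bound on $\mathcal{G}$ (not its girth) supplies the contradiction. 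Without these constructions the proposal remains a plausible outline, not a proof.
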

\begin{proof}
Let $\theta$ be an assignment from the variables $\{y\}\cup\{x_u\mid u\in V_\mathcal{G}\}$ into $M_\mathcal{H}$.  Our goal is to show that $\theta(p_\mathcal{G})$ is idempotent.  If $\theta(p_\mathcal{G})= 0$ then we are finished.    If $\theta(y)=1$ then $\theta(p_\mathcal{G})=\theta(\prod_{1\leq i\leq n}(\bw_i)^2)$ which is idempotent because the law $x^2y^2\approx (x^2y^2)^2$ is satisfied by $M_\mathcal{H}$.  Moreover we may assume that $\theta(y)$ is not idempotent because if $\theta(y)$ is an idempotent other than $1$ it is easy to see that $\theta(p_\mathcal{G})\in\{\theta(y),0\}$.

Thus for the remainder of the proof we may assume that $\theta(p_\mathcal{G})\neq 0$ (so that no variable is assigned $0$ by $\theta$) and that $\theta(y)$ is not idempotent.
We let $V_1$ denote those vertices $u$ with $\theta(x_u)=1$ and let $V_\natural$ denote those vertices that take a value equal to a product of vertex elements of $M_\mathcal{H}$.   The notation $V_{\st}$ will be used for the subset $V_G\backslash (V_1\cup V_\natural)$ consisting of those vertices $u$  with $\theta(x_u)\mathrel{\mathscr{J}}\bt$.  The vertices in $V_\st$ are the vertices for which $\theta(x_u)$ is of the form $\overline{s}_0 \bt\overline{s}_1$, where $s_0$ and $s_1$ are possibly empty subsets of hyperedges in $E_H$.  With each  $u\in V_\st$ we define the two numbers $\ell_u:=|s_0|$ and $r_u:=|s_1|$ in $\{0,1,2,3\}$.

Now we wish to show that $\theta(y)$ is not of the form $\overline{s}$ for some nonempty subset of a hyperedge.  Assume, for contradiction, that $\theta(y)$ is of the form $\overline{s}$ for some nonempty subset of a hyperedge.  Condition~(III) for $\mathcal{H}$ and Property (4) for $M_\mathcal{H}$, ensures that $V_\st$ is nonempty, while Condition ($\beta$) for the sequence $\bw_1,\bw_2,\dots$ shows that for every pair of variables $u,v\in V_G$ (possibly identical), the product $\theta(x_u)\theta(y)\theta(x_v)$ appears in $\theta(p_\mathcal{G})$.  Thus for every pair $u,v\in V_\st$ (not necessarily distinct) we have $r_u+|s|+\ell_v=3$, from which it follows that $r_u=r_v$ and $\ell_u=\ell_v$; for example $r_u+|s|+\ell_v=r_u+|s|+\ell_u$ yields $\ell_u=\ell_v$.   It now follows that no hyperedge $\{u,v,w\}$ of $\mathcal{G}$ can contain two vertices $u,v\in V_\st$ because then $x_ux_v$ is a subword of $p_\mathcal{G}$ and $\theta(x_u)\theta(x_v)\neq 0$ implies that $r_u+\ell_v=3$, contradicting $r_u+|s|+\ell_v=3$ and the fact that $|s|\geq 1$.  There is also no hyperedge $\{u,v,w\}$ of $\mathcal{G}$ containing both a vertex from $V_\st$ (say, $u$) and a vertex from $V_\natural$ (say, $v$), as the word $x_ux_vyx_u$ appears in $p_\mathcal{G}$ and gives $r_u+|s|+|\theta(v)|+\ell_v=3$, which again contradicts $r_u+|s|+\ell_v=3$.  Thus every hyperedge intersecting $V_\st$ consists of one vertex from $V_\st$ and two vertices from $V_1$.

Next we note that no hyperedge $\{u,v,w\}$ of $\mathcal{G}$ is a subset of $V_1$ either, as we may find $i$ and $u'\in V_\st$ such that $\bw_{i+1}=x_ux_vx_w$ while $\bw_i$ finishes with $x_{u'}$ and $\bw_{i+2}$ starts with $x_{u'}$.
Then $\theta(x_{u'}yx_ux_vx_wyx_{u'})\neq 0$ would require that $r_{u'}+2|s|+\ell_{u'}=3$, contradicting $r_{u'}+\ell_{u'}=3-|s|$ and $|s|\geq 1$.  Similarly, no hyperedge $\{u,v,w\}$ of~$\mathcal{G}$  can contain more than one vertex from $V_\natural$; if $u,v\in V_\natural$, then as we know that $w\notin V_\st$, we have that the product $\theta(y)\theta(x_u)\theta(x_v)\theta(x_w)\theta(y)$ is a product of vertex elements of length more than $3$.  But Condition~(III) for $\mathcal{H}$ and Property (4) for $M_\mathcal{H}$ show that that such products are $0$.  Thus every hyperedge consists of two vertices from $V_1$ and one vertex from $V_\st\cup V_\natural$.  But this is a (majority) $2$-colouring, which contradicts the choice of $\mathcal{G}$.   This completes the proof that $\theta(y)$ is not of the form~$\overline{s}$.

We have now shown that $\theta(y)$ is of the form $\overline{s}_0 \bt\overline{s}_1$, where $s_0$ and $s_1$ are possibly empty subsets of hyperedges in $E_H$.  We now split the proof into two streams.

Stream 1.  \emph{$\mathcal{G}$ has chromatic number greater than $|\wp_{\leq 3}(V_H)|+1$ and girth greater than $3|V_H|+1$}.\\
We are going to argue that the assumptions on $\theta(p_\mathcal{G})\neq 0$ and $\theta(y)\neq \theta(y)^2$ lead to the contradictory conclusion that $\mathcal{G}$ can be coloured by fewer than $|\wp_{\leq 3}(V_H)|+1$ colours.  We refine the set $V_\natural$ as follows.  Each vertex $v\in V_\natural$ has $\theta(x_v)$ equal to a nonempty subset of $E_H$, of which there are at most $|\wp_{\leq 3}(V_H)|-1$ elements.  For each such nonempty subset $s\subseteq e\in E_H$, let $V_s$ denote $\{v\in V_G\mid \theta(x_v)=\overline{s}\}$.  The family $\{V_s\mid s\subseteq e\in E_H \}\cup\{V_\st,V_1\}$ partitions $V_G$ into at most $|\wp_{\leq 3}(V_H)|+1$ elements.  We will show that no hyperedge of $\mathcal{G}$ is coloured uniformly by this partition, contradicting the chromatic number assumptions on $\mathcal{G}$.

Let $\{u,v,w\}$ be a hyperedge of $\mathcal{G}$ with $u\in V_\st$.  We show by contradiction that $v\not\in V_\st$ (in fact we can show that $v\in V_1$, but we will need only the weaker conclusion here).  Assume then, that $v\in V_\st$.  Now, each of the words $x_ux_vy$, $x_uyx_u$, $x_vx_uy$ are subwords of $p_\mathcal{G}$ from which we easily obtain  $r_u=r_v=r_y=:r$ and $\ell_u=\ell_v=\ell_y=:\ell$ and
$\ell+r=3$.
As $\theta(y)$ is not idempotent, it follows that $\{\ell,r\}=\{1,2\}$; say, $y=w\bt \overline{s}$, where $w$ is a vertex and $s$ is a two-element subset of some hyperedge.  We also let $u', s_u'$ be such that $\theta(x_u)=u'\bt \overline{s_u}$ and $v', s_v$  be such that $\theta(x_u)=v'\bt \overline{s_v}$.  Because $\theta(y)$ is not idempotent, we have that $\{w\}\cup s$ is \emph{not} a hyperedge.  But as $x_uy$ and $x_vy$ appear in $p_\mathcal{G}$ we must have $s_u\cup\{w\}$ and $s_v\cup \{w\}$ are hyperedges.  But as $x_ux_v$ appears in $p_\mathcal{G}$ we have that $s_u\cup\{v'\}$ is a hyperedge.  Condition (I) for $\mathcal{G}$ then shows that $v'=w$ and by symmetry, that $u=v'=w$.  But this contradicts the fact that $\theta(y)$ is not idempotent.  Hence  $v\not\in V_\st$ as claimed.

Now consider a hyperedge $\{u,v,w\}$ that does not intersect with $V_\st$.  We cannot have $\{u,v,w\}\subseteq V_1$, as this would mean that $\theta(yx_ux_vx_wy)=\theta(y)\theta(y)=0$, allowing for the fact that $\theta(y)$ is not idempotent. So there is at least one of $u,v,w\in V_\natural$.  The argument will be complete if we can show that if $u,v\in V_\natural$ then $\theta(x_u)\neq \theta(x_v)$, as then $u,v$ lie in a different blocks of the family $\{V_1\}\cup\{V_s\mid s\subseteq e\in E_H\}$.  But this follows immediately from Properties (2) and (3) of $M_\mathcal{H}$.  This concludes the proof in the case that $\mathcal{G}$ is not $(|\wp_{\leq 3}(V_H)|+1)$-colourable.

Stream 2.  \emph{The girth of $\mathcal{H}$ is larger than $3|V_G|+1$.}
\\
We are going to argue that $\mathcal{G}$ can be $5$-coloured, a contradiction.  The partition of $V_G$ will be as follows: $\{V_\st,V_1,V_a,V_b,V_c\}$ where $V_a$, $V_b$ and $V_c$ partition $V_\natural$ in a way to be described shortly.
We know already that if $u\in V_\st$ and $\{u,v,w\}\in E_G$, then $v,w\in V_1$, so that every edge intersecting $V_\st$ is coloured in a valid way by the proposed partition.  Thus we only need to select $V_a,V_b,V_c$ in such a way that does not uniformly colour hyperedges $\{u,v,w\}\subseteq V_1\cup V_\natural$.  Let $V_{\natural,H}$ denote those vertices $v'$ in $V_H$ for which there exists $v\in V_\natural$ with $\theta(x_v)=v'$.  There are at most $|V_G|$ such vertices, so the sub-hypergraph of $\mathcal{H}$ on $V_{\natural,H}$ is a hyperforest.  By Lemma \ref{lem:cyclefree} we may  $2$-colour this hyperforest.  Define $V_a\subseteq V_\natural$ consist of those vertices $v$ for which $\theta(v)\in V_{\natural,H}$ and that are coloured by $0$. Define $V_b\subseteq V_\natural$ to consist of those vertices~$v$ for which $\theta(v)\in V_{\natural,H}$ and that are coloured by $1$.  Let $V_c:=V_\natural\backslash(V_a\cup V_b)$, which consists of all vertices $v$ in $V_\natural$ for which $\theta(x_v)$ is a proper product of vertex elements of $M_\mathcal{H}$.  We now show that our partition is a $5$-colouring of $\mathcal{G}$, which is the desired contradiction.

We know that no edge is uniformly coloured by $V_1$.  Thus any hyperedge intersecting $V_1$ is validly coloured (as at least one vertex lies in $V_\natural$, and at least one in $V_1$).  Now consider a hyperedge $\{u,v,w\}\subseteq V_\natural$.  Then by condition~(III), property~(4) and the fact that $\theta(x_u)\theta(x_v)\theta(x_w)\neq 0$ imply that $\theta(x_u),\theta(x_v),\theta(x_w)\in V_{\natural,H}$ and $\{\theta(x_u),\theta(x_v),\theta(x_w)\}\in E_H$.  But then the $2$-colouring of such edges ensures that both $V_a$ and $V_b$ intersect $\{u,v,w\}$, as required.  This concludes the proof in the case that the girth of $\mathcal{H}$ is larger than $3|V_G|+1$, the final case for the proof.
\end{proof}

\begin{thm}
The interval $[\mathbb{B}_2^1,[\![ x^2y^2\approx y^2x^2]\!]\wedge \mathbb{A}_2^1]$ has cardinality $2^{\aleph_0}$.
\end{thm}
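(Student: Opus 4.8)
The plan is to assemble the pieces already established in this section and the previous one. The key input is the existence of an infinite family $\{\mathcal{G}_i\mid i\in\mathbb{N}\}$ of pairwise wildly incomparable $3$-uniform hypergraphs of girth at least $4$ that are not majority $2$-colourable; such a family is constructed immediately after the definition of wild incomparability by iterated application of the Erd\H os--Hajnal theorem (Theorem \ref{thm:EH}), the key point being that each $\mathcal{G}_i$ has chromatic number more than $5$ and hence is not $2$-colourable, and moreover is cycle-free-free enough to contain no majority $2$-colouring. For each $i$ set $M_i:=M_{\mathcal{G}_i}$, and for each subset $P\subseteq\mathbb{N}$ let $\mathbb{V}_P$ denote the monoid variety generated by $\{M_i\mid i\in P\}$.

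Next I would verify that each $\mathbb{V}_P$ lies strictly between the two endpoints. By Lemma \ref{lem:B21in}, $\mathbf{B}_2^1$ embeds into each $M_i$, so $\mathbb{B}_2^1\subseteq\mathbb{V}_P$ whenever $P\neq\varnothing$; and for the lower endpoint itself one notes $\mathbb{B}_2^1=\mathbb{V}_\varnothing$ vacuously is not quite right, so instead one observes that $\mathbb{B}_2^1\subseteq\mathbb{V}_P$ for all nonempty $P$ and treats $\varnothing$ separately (or simply restricts to nonempty $P$, of which there are still continuum many). For the upper endpoint, Lemma \ref{lem:commute} gives $M_i\models x^2y^2\approx y^2x^2$ and Lemma \ref{lem:A21} gives $M_i\in\mathbb{A}_2^1$, so $\mathbb{V}_P\subseteq[\![x^2y^2\approx y^2x^2]\!]\wedge\mathbb{A}_2^1$. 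Thus every $\mathbb{V}_P$ sits in the stated interval.

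The heart of the argument is showing the assignment $P\mapsto\mathbb{V}_P$ is injective. Suppose $P\neq Q$; without loss of generality pick $i\in P\setminus Q$. I claim $M_j\models p_{\mathcal{G}_i}\approx p_{\mathcal{G}_i}^2$ for every $j\neq i$ but $M_i\not\models p_{\mathcal{G}_i}\approx p_{\mathcal{G}_i}^2$. The second assertion is exactly Lemma \ref{lem:HnotpH} applied to $\mathcal{G}=\mathcal{G}_i$. For the first assertion, when $j\neq i$ the hypergraphs $\mathcal{G}_i$ and $\mathcal{G}_j$ are wildly incomparable by construction, so Lemma \ref{lem:homommon} (with $\mathcal{G}=\mathcal{G}_i$, $\mathcal{H}=\mathcal{G}_j$) gives $M_j=M_{\mathcal{G}_j}\models p_{\mathcal{G}_i}\approx p_{\mathcal{G}_i}^2$. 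Hence every $M_j$ with $j\in Q$ satisfies this identity, so $\mathbb{V}_Q\models p_{\mathcal{G}_i}\approx p_{\mathcal{G}_i}^2$, whereas $M_i\in\mathbb{V}_P$ does not, so $\mathbb{V}_P\neq\mathbb{V}_Q$. Since the power set of $\mathbb{N}$ (restricted to nonempty subsets) has cardinality $2^{\aleph_0}$ and the map is injective, the interval contains $2^{\aleph_0}$ distinct varieties; it cannot exceed $2^{\aleph_0}$ since there are only continuum many sets of identities in countably many variables.

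The main obstacle has in fact already been dispatched: it is Lemma \ref{lem:homommon}, whose proof required the rather delicate two-stream colouring analysis exploiting the large girth and chromatic number built into wild incomparability. Given that lemma and the Erd\H os--Hajnal construction, the present theorem is a bookkeeping exercise; the only point needing a moment's care is confirming that the constructed family really is pairwise wildly incomparable (each later hypergraph dominates all earlier ones in both girth and chromatic number, which is enough to satisfy one of the two alternatives in the definition for every pair).
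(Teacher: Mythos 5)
Your proposal is correct and follows essentially the same route as the paper: the iterated Erd\H{o}s--Hajnal family of pairwise wildly incomparable hypergraphs, containment in the interval via Lemmas \ref{lem:B21in}, \ref{lem:commute} and \ref{lem:A21}, and separation of the varieties $\mathbb{V}_P$ using $p_{\mathcal{G}_i}\approx p_{\mathcal{G}_i}^2$ together with Lemmas \ref{lem:HnotpH} and \ref{lem:homommon}. The only slip is cosmetic: failure of majority $2$-colourability comes from chromatic number exceeding $2$, not from girth, and in any case it is not needed for counting varieties in the closed interval.
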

\begin{proof}
For each $n>2$, let $H_{n,k}$ denote a finite $3$-uniform hypergraph with no cycles of length at most $n$ and which is not $k$-colourable.  Now we inductively define the following sequence of hypergraphs.  Let $G_1:=H_{4,5}$ and for $i\geq 1$ define
\[
G_{i+1}=H_{3|G_i|+2,|\wp_{\leq 3}(V_{G_i})|+2}.
\]
We observe that for $i\neq j$, the graphs $G_i$ and $G_j$ are wildly incomparable.
For each distinct pair of subsets $P,Q\subseteq \mathbb{N}$, we show that $\mathbb{V}_m(\{M_{G_i}\mid i\in P\})\neq \mathbb{V}_m(\{M_{G_i}\mid i\in Q\})$.  Without loss of generality, assume that $k\in P\backslash Q$.  Then $\{M_{G_i}\mid i\in Q\}\models p_{G_k}\approx p_{G_k}^2$ by Lemma \ref{lem:homommon}.  But $\{M_{G_i}\mid i\in P\}\not\models p_{G_k}\approx p_{G_k}^2$ by Lemma \ref{lem:HnotpH}.

By Lemma \ref{lem:commute}, each variety $\mathbb{V}(\{M_{G_i}\mid i\in P\})$ satisfies $x^2y^2\approx y^2x^2$ and by Lemma~\ref{lem:B21} and the fact that none of the $G_i$ is majority $2$-colourable, all properly contain $\mathbb{B}_2^1$.
\end{proof}
\begin{thm}
No finite set of identities defines $\mathbb{B}_2^1$ within $\mathbb{A}_2^1$.
\end{thm}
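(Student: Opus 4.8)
The plan is to argue by contradiction.  I would assume $\mathbb{B}_2^1=[\![\Sigma]\!]\wedge\mathbb{A}_2^1$ for some \emph{finite} set $\Sigma$ of identities; since $\mathbb{B}_2^1\subseteq[\![\Sigma]\!]$ this forces $\mathbb{B}_2^1\models\Sigma$, and I let $k$ be the largest number of variables occurring in an identity of $\Sigma$.  The goal is then to produce a $3$-uniform hypergraph $\mathcal{G}$ for which $M_\mathcal{G}\in\mathbb{A}_2^1$ (automatic, by Lemma~\ref{lem:A21}), $M_\mathcal{G}\notin\mathbb{B}_2^1$ (automatic once $\mathcal{G}$ has no majority $2$-colouring, by Lemma~\ref{lem:B21}; chromatic number at least $3$ suffices), and yet $M_\mathcal{G}\models\Sigma$, which contradicts $\mathbb{B}_2^1=[\![\Sigma]\!]\wedge\mathbb{A}_2^1$.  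Since Theorem~\ref{thm:EH} supplies hypergraphs of chromatic number $3$ and arbitrarily large girth, everything hinges on the following locality principle: \emph{for each $k$ there is a bound $g(k)$ such that if $\mathcal{G}$ is a $3$-uniform hypergraph without isolated points and $\girth(\mathcal{G})>g(k)$, then $M_\mathcal{G}$ satisfies every identity in at most $k$ variables that holds in $\mathbb{B}_2^1$.}  Granting this, I would invoke Theorem~\ref{thm:EH} with chromatic number $3$ and girth exceeding $g(k)$ (deleting isolated points, which changes neither parameter) to obtain $\mathcal{G}$ with $M_\mathcal{G}\in\mathbb{A}_2^1\setminus\mathbb{B}_2^1$ and $M_\mathcal{G}\models\Sigma$, completing the contradiction.

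To prove the principle I would suppose, for contradiction, that an identity $\mathbf{u}\approx\mathbf{v}$ in the variables $x_1,\dots,x_k$ holds in $\mathbb{B}_2^1$ but fails in $M_\mathcal{G}$ under a substitution $\theta$; because $\mathbf{B}_2^1$ has a zero element, both sides of the identity use the same variables, so I may assume $\theta$ never takes the value $0$.  By Lemma~\ref{lem:normalform1} every element of $M_\mathcal{G}$ is $0$, $1$, $\overline{s}$, or $\overline{s_0}\st\overline{s_1}$ with $s,s_0,s_1$ subsets of hyperedges, so each $\theta(x_j)$ is a product of $\st$ with at most six vertices.  I would let $W$ be the union of these vertex sets (so $|W|\le 6k$), let $W^{+}$ consist of $W$ together with the third vertex of the hyperedge extending each pair $\{u,v\}\subseteq W$ that extends to a hyperedge of $\mathcal{G}$ (unique by Lemma~\ref{lem:girth}(I)), and set $g(k):=|W^{+}|\le 6k+\binom{6k}{2}$.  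Let $\mathcal{G}'$ be the sub-hypergraph of $\mathcal{G}$ with vertex set $W^{+}$ whose hyperedges are exactly the hyperedges of $\mathcal{G}$ meeting $W$ in at least two vertices.  A cycle of $\mathcal{G}'$ would be a cycle of $\mathcal{G}$ through at most $|W^{+}|\le g(k)$ vertices, which $\girth(\mathcal{G})>g(k)$ rules out; hence $\mathcal{G}'$ is cycle-free, and $M_{\mathcal{G}'}\in\mathbb{B}_2^1$ by Lemma~\ref{lem:cyclefree}.

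It then remains to transport the failing substitution into $M_{\mathcal{G}'}$.  All the values $\theta(x_j)$, and hence $\theta(\mathbf{u})$ and $\theta(\mathbf{v})$, lie in the submonoid $N$ of $M_\mathcal{G}$ generated by $\st$ and the vertices of $W$, and $\theta(\mathbf{u})\ne\theta(\mathbf{v})$ there.  The key point is that $N$ is isomorphic to the corresponding submonoid $N'$ of $M_{\mathcal{G}'}$: by Lemmas~\ref{lem:normalform1} and~\ref{lem:normalform2} the product of a word over $\st$ and the vertices of $W$ is governed entirely by which pairs from $W$ extend to hyperedges, which triples from $W$ are hyperedges, and the restriction of $\equiv$ to extending pairs from $W$; and all three coincide in $\mathcal{G}$ and $\mathcal{G}'$ --- the first two because $\mathcal{G}'$ retains every hyperedge of $\mathcal{G}$ that meets $W$ twice, and the third because whether $\{u,v\}\equiv\{x,y\}$ is decided by whether the hyperedges of $\mathcal{G}$ extending these pairs share a third vertex, and those third vertices lie in $W^{+}=V_{\mathcal{G}'}$.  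Composing $\theta$ with this isomorphism yields a substitution into $M_{\mathcal{G}'}$ under which $\mathbf{u}\approx\mathbf{v}$ fails, contradicting $M_{\mathcal{G}'}\in\mathbb{B}_2^1$ together with $\mathbb{B}_2^1\models\mathbf{u}\approx\mathbf{v}$.  The hard part will be this final isomorphism $N\cong N'$ --- that is, checking that $M_\mathcal{G}$ and $M_{\mathcal{G}'}$ impose precisely the same equalities and inequalities among products of $\st$ and the vertices of $W$ --- since every other step is a direct appeal to lemmas already in place.
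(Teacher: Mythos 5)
The proposal is correct and follows essentially the same approach as the paper's proof. Both arguments (i) invoke Theorem~\ref{thm:EH} to produce a $3$-uniform hypergraph of chromatic number at least $3$ but very large girth, so that $M_\mathcal{G}\in\mathbb{A}_2^1\setminus\mathbb{B}_2^1$; (ii) observe that any substitution of a $k$-variable identity touches at most $6k$ vertices of $\mathcal{G}$; (iii) enlarge this vertex set by adjoining, for each pair, the (unique, by Lemma~\ref{lem:girth}(I)) third vertex of any extending hyperedge; (iv) use the girth hypothesis to conclude the resulting sub-hypergraph is a hyperforest; and (v) apply Lemmas~\ref{lem:flexB21}/\ref{lem:cyclefree} to place the resulting monoid in $\mathbb{B}_2^1$, which forces the identity to hold and gives the contradiction. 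Your phrasing of step~(v) as an isomorphism $N\cong N'$ is a cosmetic variant of the paper's claim that the submonoid $M'$ generated by $\{\st\}\cup V_1$ can be realised inside $M_{\mathcal{H}_2}$; both rest on the same observation that the hyperedge and $\equiv$-data governing products of $\st$ with the chosen vertices agree in $\mathcal{G}$ and in the restricted hypergraph.

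One point worth tightening: the paper takes $V_1$ to be a union of \emph{whole} hyperedges (for each $s\in S$ it includes the full hyperedge(s) witnessing $s_0$ and $s_1$), which guarantees that no chosen vertex is isolated in $\mathcal{H}_2$ and so the normal-form Lemmas~\ref{lem:normalform1}--\ref{lem:normalform2} apply as stated. Your $W$ is the bare union of the sets $s_0,s_1$, so a vertex $v\in W$ arising as a singleton $s_0=\{v\}$ could have all its hyperedge-partners outside $W$, leaving $v$ isolated in $\mathcal{G}'$; that case technically falls outside the hypotheses under which those lemmas were stated. Your isomorphism claim remains true (all products involving such a $v$ are forced to agree in both monoids), but it is cleanest to simply adopt the paper's device and take $W$ to be a union of the full hyperedges witnessing the representations. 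The bound stays $|W|\le 6k$, and then $|W^+|\le 6k+\binom{6k}{2}$ goes through unchanged.
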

\begin{proof}
Fix any $n\in\mathbb{N}$, and consider a $3$-uniform hypergraph $\mathcal{H}=(V,E)$ with no cycles of length at most $m:=3\binom{6n}{2}$, and chromatic number at least $3$.  In particular then, $M_\mathcal{H}\notin \mathbb{B}_2^1$ by Lemma \ref{lem:B21} because it is not $2$-colourable, let alone majority $2$-colourable; but $M_\mathcal{H}\in \mathbb{A}_2^1$ by Lemma \ref{lem:A21}.  Let $S\subseteq M_\mathcal{H}$ be a subset of size at most~$n$.  We show that the submonoid generated by $S$ lies in $\mathbb{B}_2^1$. This will prove the theorem, as for any finite set of identities~$\Sigma$ that are satisfied by ${\bf B}_2^1$ we may choose~$n\in \mathbb{N}$ to be the maximum number of variables appearing in $\Sigma$.  Any evaluation $\theta$ of an identity $\bu\approx \bv$ from $\Sigma$ into $M_\mathcal{H}$ maps into a submonoid generated by at most $n$-elements.  If we have shown that such a submonoid lies in $\mathbb{B}_2^1$, then as ${\bf B}_2^1$ satisfies $\bu\approx \bv$ we must have $\bu\theta=\bv\theta$.  This will show that $M_\mathcal{H}$ satisfies $\Sigma$, showing that $\Sigma$ is insufficient to define $\mathbb{B}_2^1$ within $\mathbb{A}_2^1$.

We begin by selecting a subset $V_1$ of the vertices $V$ to represent the elements of~$S$.  By Lemma \ref{lem:normalform1}, each element $s\in S$ can be written as $\overline{s_0}$ for some subset $s_0$ of size at most $3$ of $V$, or as $\overline{s_0}\bt\overline{s_1}$ for subsets $s_0,s_1\subseteq V$ of size at most $3$.  Moreover the subsets $s_0$ and $s_1$ (if applicable) extend to hyperedges of $\mathcal{H}$.  Thus each $s$ can be associated with either a hyperedge $\{u_0,v_0,w_0\}$, or a pair of hyperedges $\{u_0,v_0,w_0\}$, $\{u_1,v_1,w_1\}$ with $s_0$ equal to one of $\varnothing$, $\{u_0\}$, $\{u_0,v_0\}$ or $\{u_0,v_0,w_0\}$ and $s_1$ (if applicable) equal to one of $\varnothing$, $\{u_1\}$, $\{u_1,v_1\}$ or $\{u_1,v_1,w_1\}$.  Note that the choice of $\{u_0,v_0,w_0\}$ may not be unique: for example if $s_0=\varnothing$, or if $s_0=\{u_0\}$, or even if $s_0=\{u_0,v_0\}$ but $\{u_0,v_0\}\equiv\{u_0',v_0'\}$ (and similarly for $s_1$).  Nevertheless, for each $s\in S$ we fix a choice for the one or two hyperedges, and let $V_1$ denote the union over these choices for all $s\in S$. Note that $|V_1|\leq 6n<m$ so that the induced subgraph $\mathcal{H}_1$ of $\mathcal{H}$ on the vertices $V_1$ is a hyperforest due to the condition on the girth of $\mathcal{H}$.

The set $\{\bt\}\cup V_1$ generates a submonoid $M'$ of $M_\mathcal{H}$ that contains the submonoid generated by $S$.  At this point we can use Lemma \ref{lem:cyclefree} to deduce that the monoid~$M_{\mathcal{H}_1}$ lies in $\mathbb{B}_2^1$, however this is not sufficient to prove that $M'\in \mathbb{B}_2^1$ because the monoid~$M_{\mathcal{H}_1}$ is not typically identical to $M'$, even if it is similar.  Both have the same generators and many of the rules determining $M_{\mathcal{H}_1}$ hold also on $M'$, but there are differences due to the fact that some products in $M'$ have properties determined in $M_\mathcal{H}$ that are not witnessed in~$M_{\mathcal{H}_1}$.  The first difference arises from the property~(4) in the definition of the $M_\mathcal{G}$ construction.  If $\{u,v\}$ is a set of vertices in $\mathbb{H}_1$ that extends to a hyperedge in $\mathbb{H}$ but not $\mathbb{H}_1$, then $M_\mathcal{H}$ satisfies $uv=0$ while~$M'$ does not.  A second difference arises from property~(8) as it is possible that $\{u,u'\}$ and $\{v,v'\}$ may be sets of vertices in $\mathcal{H}_1$ with $\{u,u'\}\equiv \{v,v'\}$ in $\mathcal{H}$ but not in $\mathcal{H}_1$: then~$M'$ satisfies $uv=u'v'\neq 0$ while in $M_{\mathcal{H}}$ they will be $0$.

In order to eliminate this problem we utilise the full value of $m$.  Let
\[
V_2:=\bigcup\{e\in E\mid e\text{ extends  a $2$-element subset of $V_1$}\},
\]
and let $\mathcal{H}_2$ be the induced subgraph.  As before, it is not necessarily the case that~$M_{\mathcal{H}_2}$ is a submonoid of $M_\mathcal{H}$, however we will show that $M'$ is a submonoid of $M_{\mathcal{H}_2}$ and that $M_{\mathcal{H}_2}\in \mathbb{B}_2^1$, which is all we need (as then $M'\in \mathbb{B}_2^1$ as required).  Now, every vertex $v$ of $V_1$ lies in some hyperedge of $\mathbb{H}$, so $V_1\subseteq V_2$ by the definition of $V_2$.  Moreover, every pair $\{u,v\}$ that extends to a hyperedge in $\mathcal{H}$ extends to a hyperedge in $\mathbb{H}_2$, again by the definition of $V_2$.  Thus for products involving only $\bt$ and vertices in $V_1$, all of the conditions (1)--(8) that hold in~$M_\mathcal{H}$ also hold in~$M_{\mathcal{H}_2}$.  So $M'$ (which is generated by $V_1$ as a submonoid of $M_\mathcal{H}$) is a submonoid of~$M_{\mathcal{H}_2}$.  Now we observe that $|V_2|\leq m$ because for each $2$-element subset $\{u,v\}$ of $V_1$, there is at most one hyperedge that extends $\{u,v\}$.  For thus for each of the at most~$\binom{6n}{2}$ pairs of vertices in $V_1$ we have included up to $3$ vertices in $V_2$, showing that $|V_2|\leq 3\binom{6n}{2}=m$.  As $\mathcal{H}$ contains no cycles of length at most $m$, it follows that $\mathcal{H}_2$ is a hyperforest, so that $M_{\mathcal{H}_2}\in \mathbb{B}_2^1$ by Lemma \ref{lem:cyclefree}.
\end{proof}

\section{From $\mathbb{B}_2^1$ to $\mathbb{M}(\bz_\infty)$ via $x^2y\approx yx^2$.}
We now turn our attention to the interval between $\mathbb{M}(\bz_\infty)$ and $\mathbb{B}_2^1$.  We introduce some standard notation in the analysis of semigroup words.  We fix some ambient countably infinite set $\mathcal{X}$ of variables from which the letters in our words will be drawn.  The \emph{content} of a word~$\bw$, denoted $\con(\bw)$ is the set of letters that have at least one occurrence in $\bw$.  Letters with as single occurrence are said to be \emph{simple} in $\bw$, and the set of simple letters for $\bw$ is denoted $\simp(\bw)$.  The set $\con(\bw)\backslash\simp(\bw)$ consists of the nonsimple letters of $\bw$ that occur in $\bw$, and is denoted by $\non(\bw)$.  The first letter, or head, to appear in~$\bw$ (from left to right) is denoted~$\h(\bw)$, while the dual notion of being the first appear from right to left (or the last from left to right) is the tail, denoted~$\tail(\bw)$.  Often it will be useful to delete letters from a word and consider the string that remains: if $A\subseteq\mathcal{X}$ and $\bw$ is a word, then $\bw_A$ will denote the word obtained by deleting all letters from $\bw$ that do not appear in~$A$.

\begin{lem} \label{lem: B21 xyxy}
Let $\bu \approx \bv$ be any identity satisfied by the
monoid ${\bf B}_2^1$ and $\bu =(xy)^k$ for some $k \geq 2$. Then
$\bv=(xy)^h$ for some $h \geq 2$.
\end{lem}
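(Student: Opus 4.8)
The plan is to work with a concrete, faithful-enough representation of ${\bf B}_2^1$ and track what an evaluation of $(xy)^k$ can look like, then argue that the only way an identity $\bu\approx\bv$ with $\bu=(xy)^k$, $k\ge 2$, can hold is if $\bv$ has a very restricted form, which combined with a separating evaluation forces $\bv=(xy)^h$. First I would recall the six elements of ${\bf B}_2^1$ and note the key multiplicative facts: $\sa^2=\ssb^2=0$, $\sa\ssb\sa=\sa$, $\ssb\sa\ssb=\ssb$, and that $\sa\ssb$, $\ssb\sa$ are the two nonidentity idempotents while $\sa$, $\ssb$ are the two non-idempotent nonzero elements. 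The crucial observation is that $(xy)^k$ evaluates to something nonzero precisely when $\{x\mapsto\sa,y\mapsto\ssb\}$ or $\{x\mapsto\ssb,y\mapsto\sa\}$ (or evaluations involving $1$); under the first, $(xy)^k\mapsto(\sa\ssb)^k=\sa\ssb$ (an idempotent), and similarly for the others. So the identity being satisfied means $\bv$ must take the same values under all these evaluations.

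Next I would extract structural consequences for $\bv$ from these particular evaluations. Setting $x\mapsto 1$, $y\mapsto\sa$ forces $\bv\mapsto\sa$ (since $\bu=(xy)^k\mapsto\sa^k=0$ if $k\ge 2$... wait — here one must be careful: with $x\mapsto 1$, $\bu=(1\cdot\sa)^k=\sa^k=0$ for $k\ge2$); so actually this evaluation gives $\bv\mapsto 0$. The useful evaluations are the nonzero ones: $x\mapsto\sa,y\mapsto\ssb$ giving $\bv\mapsto\sa\ssb$, and $x\mapsto\ssb,y\mapsto\sa$ giving $\bv\mapsto\ssb\sa$; also $x\mapsto 1,y\mapsto 1$ giving $\bv\mapsto 1$, so $1\notin\con(\bv)$ forces $\con(\bv)=\{x,y\}$ and in fact $\bv\mapsto 1$ under the all-$1$ map tells us nothing new but $x\mapsto1,y\mapsto\sa\ssb$ gives $\bv\mapsto\sa\ssb$, etc. From the evaluation $x\mapsto\sa\ssb$ (an idempotent), $y\mapsto\ssb\sa$ one computes $\bu=(\sa\ssb\cdot\ssb\sa)^k=(\sa\cdot 0\cdot\sa)^k=0$, which is not informative. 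The informative evaluations are really just $x\mapsto\sa,y\mapsto\ssb$ and its mirror: I would use these, together with the standard fact (via the word-problem description of ${\bf B}_2^1$, or a direct argument in the free object) that for a word $\bv$ with $\con(\bv)=\{x,y\}$, the condition ``$\bv\mapsto\sa\ssb$ whenever $x\mapsto\sa,y\mapsto\ssb$'' forces $\bv$ to begin with $x$, end with $y$, and have $x,y$ strictly alternating — i.e.\ $\bv=(xy)^h$ for some $h\ge 1$ — because any immediate repetition $xx$ or $yy$ evaluates to $0$, and if $\bv$ starts with $y$ or ends with $x$ the product picks up a leading/trailing $\ssb$, giving $\ssb\sa\ssb\cdots=\ssb\ne\sa\ssb$ or similar.

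Finally I would rule out $h\le 1$. If $h=0$ then $\bv$ is empty, i.e.\ $\bv\mapsto 1$ always, contradicting $\bv\mapsto\sa\ssb\ne 1$. If $h=1$ then $\bv=xy$; but $\bu=(xy)^k$ with $k\ge2$ and ${\bf B}_2^1$ does not satisfy $(xy)^k\approx xy$ — witnessed by $x\mapsto\sa,y\mapsto 1$: then $(xy)^k\mapsto\sa^k=0$ while $xy\mapsto\sa\ne 0$. Hence $h\ge 2$, completing the proof. The main obstacle I anticipate is making rigorous the step ``$\bv\mapsto\sa\ssb$ under $x\mapsto\sa,y\mapsto\ssb$ forces $\bv$ to be an alternating word $(xy)^h$'': one must handle all the boundary cases (leading/trailing letter, adjacent repeats) cleanly, ideally by noting that in any word over $\{\sa,\ssb\}$, a nonzero value of a length-$\ge2$ product forces the factors to alternate between $\sa$ and $\ssb$, so the only nonzero products equal to the idempotent $\sa\ssb$ (rather than $\sa$, $\ssb$, or $\ssb\sa$) are exactly those of the form $\sa\ssb\sa\ssb\cdots\sa\ssb$. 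I would phrase this as a short self-contained sublemma about products of generators in ${\bf B}_2^1$ and then apply it directly.
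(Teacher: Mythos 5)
Your proposal is correct and at its core uses the same method as the paper: test the identity under explicit substitutions into $\mathbf{B}_2^1$ and extract structural constraints on $\bv$. The packaging differs a little: the paper separately establishes $\con(\bv)=\{x,y\}$, $\non(\bv)=\{x,y\}$, $\h(\bv)=x$, $\tail(\bv)=y$, and the absence of $x^2$, $y^2$ as subwords (all via the two substitutions $x\mapsto a,\,y\mapsto 1$ and $x\mapsto a,\,y\mapsto b$) and then reads off $\bv=(xy)^h$ with $h\ge 2$; you instead derive $\bv=(xy)^h$ with $h\ge 1$ in one stroke from the evaluation $x\mapsto a$, $y\mapsto b$ via your sublemma on nonzero products in $\langle a,b\rangle$, and then rule out $h\le 1$ separately via $x\mapsto a$, $y\mapsto 1$ — both are fine, and the sublemma you isolate is precisely what the paper's middle steps verify incrementally. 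One small repair is needed: your sentence ``so $1\notin\con(\bv)$ forces $\con(\bv)=\{x,y\}$'' does not parse ($1$ is not a letter), and as written the all-$1$ evaluation cannot exclude extra variables in $\bv$ (they could also be sent to $1$); the intended argument is that any $z\in\con(\bv)\setminus\{x,y\}$ is eliminated by $z\mapsto a$, $x,y\mapsto 1$, while $x,y\in\con(\bv)$ follows from $x\mapsto a$, $y\mapsto 1$ and its mirror, after which your alternating-word sublemma applies cleanly.
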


\begin{proof}
Suppose that $\mathsf{con} (\bv) \ne \mathsf{con} (\bu)$, say $x \notin \mathsf{con} (\bv)$.
Then letting $\varphi: \mathcal{X} \rightarrow {\bf B}_2^1$ be the substitution that maps $x$ to $a$ and $y$ to $1$, we obtain the contradiction $\varphi(\bu) = a^k=0 \ne 1=\varphi(\bv)$. Thus $\mathsf{con} (\bv) = \mathsf{con} (\bu)=\{x, y\}$.

Suppose that $\mathsf{non} (\bv) \ne \mathsf{non} (\bu)$, say $x \in \mathsf{sim} (\bv)$.
Then letting $\varphi: \mathcal{X} \rightarrow {\bf B}_2^1$ be the substitution that maps $x$ to $a$ and $y$ to $1$, we obtain the contradiction $\varphi(\bu) = a^k=0 \ne a=\varphi(\bv)$. Thus $\mathsf{non} (\bv) = \mathsf{non} (\bu)=\{x, y\}$.

Suppose that $\sh({\bv}) \ne \sh({\bu})$. Then $\sh({\bv})=y$. Let $\varphi: \mathcal{X}
\rightarrow {\bf B}_2^1$ be the substitution that maps $x$ to $a$ and $y$ to $b$. Then $\varphi(\bu) = ab$ but $\varphi(\bv) \in b\{ a, b\}^{+}=\{0, ba, b\}$, and so we obtain the contradiction $\varphi(\bu) \ne \varphi(\bv)$. Thus $\sh({\bv}) = \sh({\bu})$. By symmetry, we can show that $\st({\bv})=\st({\bu})$.

Suppose that $x^2$ is a subword of $\bv$. Then letting $\varphi: \mathcal{X}
\rightarrow {\bf B}_2^1$ be the substitution that maps $x$ to $a$ and $y$ to $b$, we obtain the contradiction $\varphi(\bu) = ab \ne 0 = \dots a^2 \ldots =\varphi(\bv)$. Thus $\bv$ does not have subword $x^2$. Similarly, $\bv$ does not have subword $y^2$. Therefore the only form of $\bv$ must be $\bv=(xy)^h$ for some $h \geq 2$.
\end{proof}

By a similar argument, we have the following lemma.
\begin{lem} \label{lem: B21 xyxyx}
Let $\bu \approx \bv$ be any identity satisfied by the
monoid ${\bf B}_2^1$ and $\bu =(xy)^kx$ for some $k \geq 2$. Then
$\bv=(xy)^hx$ for some $h \geq 2.$
\end{lem}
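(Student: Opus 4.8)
The plan is to follow the proof of Lemma~\ref{lem: B21 xyxy} almost verbatim, adjusting only for the fact that $\bu=(xy)^kx$ now begins and ends with $x$. The two test substitutions are $\varphi_1\colon x\mapsto\sa,\,y\mapsto 1$ (together with its mate $x\mapsto 1,\,y\mapsto\sa$) and $\varphi_2\colon x\mapsto\sa,\,y\mapsto\ssb$. The only arithmetic fact needed beyond the relations $\sa\ssb\sa=\sa$, $\ssb\sa\ssb=\ssb$, $\sa^2=\ssb^2=0$ is that $(\sa\ssb)^k=\sa\ssb$ for every $k\ge 1$ (since $\sa\ssb\sa\ssb=(\sa\ssb\sa)\ssb=\sa\ssb$), from which $\varphi_2(\bu)=(\sa\ssb)^k\sa=\sa\ssb\sa=\sa\ne 0$, while $\varphi_1(\bu)=\sa^{k+1}=0$ (using $k+1\ge 2$) and the mate substitution gives $\sa^k=0$ as well.

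First I would pin down the combinatorics of $\bv$, mimicking the earlier lemma. If $x\notin\con(\bv)$ then $\varphi_1(\bv)=1\ne 0=\varphi_1(\bu)$, and if $y\notin\con(\bv)$ the mate substitution gives $1\ne 0$; hence $\con(\bv)=\{x,y\}$. If $x\in\simp(\bv)$ then $\varphi_1(\bv)=\sa\ne 0=\varphi_1(\bu)$, and dually for $y$, so $\non(\bv)=\{x,y\}$. For the head, if $\h(\bv)=y$ then $\varphi_2(\bv)\in\ssb\{\sa,\ssb\}^{+}=\{\ssb,\ssb\sa,0\}$, which omits $\sa=\varphi_2(\bu)$; for the tail, if $\tail(\bv)=y$ then $\varphi_2(\bv)\in\{\sa,\ssb\}^{+}\ssb=\{\ssb,\sa\ssb,0\}$, again omitting $\sa$. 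Thus $\h(\bv)=\tail(\bv)=x$. Finally, if $\bv$ contained the subword $x^2$ (resp.\ $y^2$) then $\varphi_2(\bv)$ would contain the factor $\sa^2=0$ (resp.\ $\ssb^2=0$), forcing $\varphi_2(\bv)=0\ne\sa$; so $\bv$ has no square subword.

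Putting these together, $\bv$ is a word on $\{x,y\}$ with no two equal adjacent letters whose first and last letters are both $x$, hence $\bv=(xy)^hx$ for some $h\ge 0$; and $h\ge 2$ because $y\in\non(\bv)$ forces at least two occurrences of $y$. I do not anticipate any real obstacle: the argument is routine, and the only genuine departure from Lemma~\ref{lem: B21 xyxy} is that the tail of $\bu$ is $x$ rather than $y$, so one cannot appeal to the $x\leftrightarrow y$ symmetry used there and must instead compute $\{\sa,\ssb\}^{+}\ssb$ by hand.
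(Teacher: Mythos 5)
Your proof is correct and is exactly what the paper intends: the paper gives no separate argument for this lemma, stating only that it follows "by a similar argument" to Lemma~\ref{lem: B21 xyxy}, and your adaptation (same substitutions, with the tail case computed directly via $\{\sa,\ssb\}^{+}\ssb=\{0,\ssb,\sa\ssb\}$ instead of by symmetry) is precisely that routine adjustment.
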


For each odd number $n\geq 3$, define
\[
\bw_n  = x_1 y x_2 z \dots x_n y x_1 z  x_2 y \dots y x_n.%
\]
By this definition, it is easy to see that ${\bw_n}_{\{x_1, x_2, \dots, x_n\}}=(x_1 x_2 \dots x_n)^2$ and ${\bw_n}_{\{y, z\}}=(yz)^{n-1}y$.

\begin{lem}\label{lem: B21 isoterm}
The word $\bw_n$ is an isoterm for ${\bf B}_2^1$.
\end{lem}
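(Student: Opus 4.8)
I would prove the statement in the form: if ${\bf B}_2^1\models\bw_n\approx\bv$ then $\bv=\bw_n$.  The basic tool is that ${\bf B}_2^1$ is a monoid, so substituting any variable by $1$ is allowed; thus from $\bw_n\approx\bv$ I may read off ${(\bw_n)}_A\approx\bv_A$ on ${\bf B}_2^1$ for every set $A$ of variables (``projection to $A$'').  Throughout I use the description of multiplication in ${\bf B}_2^1=\{0,1,a,b,ab,ba\}$: a product of these is nonzero iff the string over $\{a,b\}$ obtained by spelling out each factor is alternating, in which case its value is determined by its first and last letters.

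First, the usual substitutions (one variable to $a$, the rest to $1$) give $\con(\bv)=\con(\bw_n)=\{x_1,\dots,x_n,y,z\}$ and that no letter of $\bv$ is simple, since every letter of $\bw_n$ occurs at least twice ($z$ least often, with $n-1\ge2$ occurrences).  Next, sending every $x_i$ to $a$ and both $y,z$ to $b$ maps $\bw_n$ to the alternating word $abab\cdots a$, whose value is $a$; hence $\bv$ is sent to $a$ too, and this forces $\bv$ to alternate strictly between $x$-variables and $\{y,z\}$-variables, beginning and ending with an $x$-variable.  In particular $\bv$ has exactly one more $x$-occurrence than $\{y,z\}$-occurrence.

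Then I would use the $\{y,z\}$-projection and the $\{x_i,x_j\}$-projections.  Since ${(\bw_n)}_{\{y,z\}}=(yz)^{n-1}y$, Lemma~\ref{lem: B21 xyxyx} gives $\bv_{\{y,z\}}=(yz)^{h}y$ for some $h\ge2$; since ${(\bw_n)}_{\{x_i,x_j\}}=(x_ix_j)^2$ for $i<j$, Lemma~\ref{lem: B21 xyxy} gives $\bv_{\{x_i,x_j\}}=(x_ix_j)^{m}$, and these exponents must all agree (comparing pairs through a fixed index), so every $x_i$ has the same multiplicity $m\ge 2$ in $\bv$.  The routine observation that a word whose every two-letter projection is $(x_ix_j)^{m}$ must equal $(x_1\cdots x_n)^{m}$ then pins down the $x$-reading of $\bv$; combining this with the alternation above and with $\bv_{\{y,z\}}=(yz)^hy$, the word $\bv$ is completely determined --- it is $\bw_n$ with the exponent $2$ replaced by $m$, and $m$ is even because $nm-1=2h+1$ with $n$ odd.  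Write $\bv^{(m)}$ for this word, so $\bv^{(2)}=\bw_n$; a short structural check gives $\bv^{(m)}=\bw_n(z\bw_n)^{m/2-1}$.

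The one genuinely delicate point --- and the step I expect to be the main obstacle --- is excluding $m\ge4$: since ${\bf B}_2^1$ cannot ``count'' repeated blocks, none of the small projections above separate $\bw_n$ from $\bv^{(4)}$, so a more cunning substitution is required.  The substitution I would use is $\varphi(y)=1$, $\varphi(z)=ba$, $\varphi(x_1)=a$, $\varphi(x_i)=ba$ for $2\le i\le n-1$, and $\varphi(x_n)=b$.  Deleting the $y$'s turns $\bw_n$ into a product of $n$ consecutive two-letter $x$-blocks separated by the $n-1$ occurrences of $z$; under $\varphi$ the first block becomes $a$, the last becomes $b$, and every intermediate block as well as every $z$ becomes $ba$, so $\varphi(\bw_n)=a(ba)^{2n-3}b=ab$ in ${\bf B}_2^1$.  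But then $\varphi(\bv^{(m)})=\varphi(\bw_n)\bigl(\varphi(z)\varphi(\bw_n)\bigr)^{m/2-1}=ab\,(ba\cdot ab)^{m/2-1}=ab\cdot 0^{\,m/2-1}=0$ for every $m\ge4$, contradicting ${\bf B}_2^1\models\bw_n\approx\bv^{(m)}$.  Hence $m=2$, so $\bv=\bv^{(2)}=\bw_n$, and $\bw_n$ is an isoterm.  Everything outside the last paragraph is bookkeeping with projections and the two preceding lemmas; the work is in finding that final substitution.
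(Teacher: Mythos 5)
Your proof is correct. It follows the same overall strategy as the paper's: projections to $\{y,z\}$ and $\{x_i,x_j\}$ via Lemmas~\ref{lem: B21 xyxy} and \ref{lem: B21 xyxyx} to pin down the skeleton, plus a final bespoke substitution to rule out the longer candidates. Where you differ is in the packaging. You establish strict alternation (and the head/tail information) in one stroke with the single substitution $x_i\mapsto a$, $y,z\mapsto b$, whereas the paper derives it piecemeal (no subword $yz$, no subword $x_ix_{i+1}$, head is $x_1$, tail is $x_n$) through separate substitutions. You then parameterize the surviving candidates as $\bv^{(m)}=\bw_n(z\bw_n)^{m/2-1}$, $m$ even, which is equivalent to the paper's conclusion that $\bw'_n=\bw_n\bp$. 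For the last step your substitution ($y\mapsto 1$, $z\mapsto ba$, $x_1\mapsto a$, $x_n\mapsto b$, other $x_i\mapsto ba$) is different from the paper's ($z\mapsto 1$, $y\mapsto b$, odd-indexed $x_i\mapsto a$, even-indexed $x_i\mapsto 1$), but both work: yours sends $\bw_n$ to $ab$ and kills the seam $z\bw_n$ via $ba\cdot ab=0$, while the paper's sends $\bw_n$ to $a$ and kills the seam via $a\cdot a=0$. I checked your block computation ($\varphi(\bw_n)=a(ba)^{2n-3}b=ab$) and the interleaving argument ($nm=2h+2$, so $m$ even and $\bv=\bv^{(m)}$ is forced); both are sound. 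Net effect: a tidier version of the paper's argument with a different closing substitution — nothing to fix.
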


\begin{proof}
Suppose that $\bw_n \approx \bw'_n$ is an identity satisfied by the
monoid ${\bf B}_2^1$. Then it suffices to show that $\bw'_n = \bw_n$.

Since for each $i<j$, ${\bw_n}_{\{x_i, x_j\}} = (x_ix_j)^2$, it follows from Lemma~\ref{lem: B21 xyxy} that ${\bw'_n}_{\{x_i, x_j\}} = (x_ix_j)^h$ for some $h\geq 2$. From this it easily follows that
\begin{enumerate}
\item[$(\rm a)$] ${\bw'_n}_{\{x_1, x_2, \dots, x_n\}}=(x_1 x_2 \dots x_n)^h$ for some $h \geq 2$.
\end{enumerate}
Since ${\bw_n}_{\{y, z\}}=(yz)^{n-1}y$, it follows from Lemma~\ref{lem: B21 xyxyx} that
\begin{enumerate}
\item[$(\rm b)$] ${\bw'_n}_{\{y, z\}}=(yz)^\ell y$ for some $\ell \geq 2.$
\end{enumerate}

Suppose that $yz$ is a subword of $\bw'_n$. Then letting $\varphi: \mathcal{X}
\rightarrow {\bf B}_2^1$ be the substitution that maps $y$ and $z$ to $b$ and any other letter to $a$, we obtain the contradiction $\varphi(\bw_n) = ab\dots a=a \ne 0=\dots b^2 \dots =\varphi(\bw'_n)$. Thus
\begin{enumerate}
\item[$(\rm c)$] $\bw'_n$ does not have subword $yz$.
\end{enumerate}

Suppose that $x_ix_{i+1}$ is a subword of $\bw'$ (if $i=n$, then let $i+1=1$). Then letting $\varphi: \mathcal{X} \rightarrow {\bf B}_2^1$ be the substitution that maps $y$ and $z$ to $b$ and any other letter to $a$, we obtain the contradiction $\varphi(\bw_n)  = ab\dots a=a \ne 0 = \dots a^2 \dots =\varphi(\bw'_n)$. Thus
\begin{enumerate}
\item[$(\rm d)$]  $\bw'_n$ does not have subword $x_ix_{i+1}$.
\end{enumerate}

Suppose that $\sh({\bw_n}) \ne \sh({\bw'_n})$. Then it follows from $(\rm a)$ and $(\rm b)$ that $\sh({\bw'_n})= y$. Let $\varphi: \mathcal{X}
\rightarrow {\bf B}_2^1$ be the substitution that maps $y$ and $z$ to $b$ and any other letter to $a$.  Then $\varphi(\bw_n) = ab \dots  a=a$ but $\varphi(\bw'_n) \in b\{ a, b\}^{+}=\{0, ba, b\}$, and so we obtain the contradiction $\varphi(\bw_n) \ne \varphi(\bw'_n)$. Thus
\begin{enumerate}
\item[$(\rm e)$] $\sh({\bw_n}) = \sh({\bw'_n})=x_1$.
\end{enumerate}
By symmetry, we may show that
\begin{enumerate}
\item[$(\rm f)$] $\st({\bw_n}) = \st({\bw'_n})=x_n$.
\end{enumerate}

Now it follows from $(\rm a)-(\rm d)$ that $y$ (resp.~$z$) must be sandwiched between each $x_i$ and $x_{i+1}$, and each $x_i$, except for $\sh({\bw'_n})$ and $\st({\bw'_n})$, must be sandwiched between $y$ and $z$. Hence it follows from $(\rm a), (\rm e),(\rm f)$ that $\bw'_n$ is of the form
\begin{equation*}
\underbrace{\ x_1 y x_2 z \dots x_n y x_1 z  x_2 y \dots y x_n}_{\bw_n} \underbrace{z x_1 y  \dots y x_n}_{\bp} .%
\end{equation*}

Suppose that $\bp \ne \varnothing$. Let $\varphi:\mathcal{X} \rightarrow {\bf B}_2^1$ be the substitution given by
\[
t \mapsto %
\begin{cases}
a & \text{if } t \in \{x_1, x_3, \ldots , x_n \}, \\ %
1 & \text{if } t \in \{z, x_2, x_4, \ldots , x_{n-1} \}, \\ %
b & \text{if } t = y. %
\end{cases}
\]
Then $\varphi(\bw_n) = (a\cdot b\cdot 1 \cdot 1)^{\frac{n-1}{2}} a\cdot b \cdot (a\cdot 1\cdot 1 \cdot b)^{\frac{n-1}{2}}\cdot a=a$ but $\varphi(\bw'_n) = \varphi(\bw_n) \varphi(z) \varphi(x_1) \dots = a \cdot 1 \cdot a \dots =0$, and so we obtain the contradiction $\varphi(\bw_n) \ne \varphi(\bw'_n)$. Thus $\bp = \varnothing$, and so $\bw'_n = \bw_n$.
\end{proof}

\begin{lem}\label{lem: Z isoterm}
The word $\bw_n$ is an isoterm for the variety $\mathbb{B}_2^1\cap[\![ x^2y\approx yx^2]\!]$.
\end{lem}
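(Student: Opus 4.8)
The plan is to deduce this from Lemma~\ref{lem: B21 isoterm}, using the fact that $\bw_n$ is \emph{square-free}: no nonempty word $\bt$ has $\bt\bt$ as a subword of $\bw_n$. Note that the subvariety $\mathbb{B}_2^1\cap[\![x^2y\approx yx^2]\!]$ has a strictly larger equational theory than $\mathbb{B}_2^1$, so isotermhood does not transfer downward for free; the role of square-freeness is precisely to prevent the extra law $x^2y\approx yx^2$ from ever being applied to (a deductive descendant of) $\bw_n$.

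First I would check that $\bw_n$ is square-free. No two consecutive letters of $\bw_n$ are equal, since each adjacent pair consists of one of the $x_i$ together with one of $y,z$; hence there is no subword $\bt\bt$ with $|\bt|=1$. Now suppose $\bt\bt$ is a subword with $|\bt|\geq 2$. If $\bt$ contains none of the letters $x_i$, then $\bt$ is a subword of length at least $2$ involving only $y$ and $z$; but $y$ and $z$ are never adjacent in $\bw_n$, a contradiction. Otherwise $\bt$ contains some $x_i$; since $x_i$ occurs exactly twice in $\bw_n$, the two occurrences of $x_i$ inside $\bt\bt$ (one in each copy of $\bt$) are precisely its two occurrences in $\bw_n$, so $|\bt|$ equals the distance in $\bw_n$ between those two occurrences. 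From ${\bw_n}_{\{x_1,\dots,x_n\}}=(x_1\cdots x_n)^2$ one sees that there are $n-1$ further $x$-letters and $n$ of the letters $y,z$ strictly between the two copies of $x_i$, so this distance is $2n$ for every $i$. But then $|\bt\bt|=4n>4n-1=|\bw_n|$, a contradiction. Hence $\bw_n$ is square-free.

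Next, suppose $\bw_n\approx\bv$ holds in $\mathbb{B}_2^1\cap[\![x^2y\approx yx^2]\!]$. This variety is defined by the identities of ${\bf B}_2^1$ together with $x^2y\approx yx^2$, so by the completeness theorem for equational logic there is a sequence $\bw_n=\bu_0,\bu_1,\dots,\bu_k=\bv$ in which each $\bu_{j+1}$ is obtained from $\bu_j$ by replacing some subword that is a substitution instance of one side of an identity of ${\bf B}_2^1$, or of $x^2y\approx yx^2$, by the corresponding instance of the other side. I claim $\bu_j=\bw_n$ for all $j$, which yields $\bv=\bw_n$. This holds for $j=0$. Assume $\bu_j=\bw_n$. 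If the $(j{+}1)$-st step uses an identity of ${\bf B}_2^1$, then $\bu_{j+1}\approx\bw_n$ holds in ${\bf B}_2^1$, so $\bu_{j+1}=\bw_n$ since $\bw_n$ is an isoterm for ${\bf B}_2^1$ by Lemma~\ref{lem: B21 isoterm}. If instead it uses $x^2y\approx yx^2$, with $x,y$ substituted by words $\bt,\bs$, then it replaces a subword of $\bw_n$ equal to $\bt\bt\bs$ by $\bs\bt\bt$, or vice versa; if $\bt\neq\varnothing$ this makes $\bt\bt$ a subword of $\bw_n$, contradicting square-freeness, and if $\bt=\varnothing$ the step is trivial, giving $\bu_{j+1}=\bw_n$. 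So $\bu_{j+1}=\bw_n$ in every case, completing the induction and the proof.

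The only subtle point is the observation isolated above: the law $x^2y\approx yx^2$ can enter a deduction starting from $\bw_n$ only by acting on a square, whereas identities of ${\bf B}_2^1$ can never carry us off the isoterm $\bw_n$; together these pin the entire derivation to $\bw_n$. Everything else is bookkeeping.
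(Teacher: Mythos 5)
Your proof is correct and takes essentially the same approach as the paper: both rest on the observation that $\bw_n$ is square-free, so the law $x^2y\approx yx^2$ can never act nontrivially on it, and then combine this with Lemma~\ref{lem: B21 isoterm} to pin every word in a deduction to $\bw_n$. You are a bit more careful than the paper, which declares square-freeness ``obvious'' and compresses the induction over the deduction sequence into a single-step analysis.
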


\begin{proof}
Since $\bw_n$ is an isoterm for the variety ${\bf B}_2^1$, it suffices to show that any nontrivial identity $\bw_n \approx \bw'_n$ cannot be obtained from $x^2y\approx yx^2$. Suppose that there exist words $\be, \mathbf{f} \in \mathcal{X}^{*}$ and a substitution $\theta$ such that $\bw_n = \be ((x^2y) \theta) \mathbf{f}$
and $\bw'_n = \be \theta (yx^2) \mathbf{f}$, or $\bw_n = \be \theta(yx^2) \mathbf{f}$
and $\bw'_n = \be \theta(x^2y) \mathbf{f}$. By symmetry, we may assume that $\bw_n = \be \theta(x^2y) \mathbf{f}$ and $\bw'_n = \be \theta(yx^2) \mathbf{f}$. Since $x^2$ is a square subword in $x^2y$, we have $\theta(x^2)$ is a subword of $\bw_n$. But it is obvious that $\bw_n$ does not have any square subword, whence such $\theta$ does not exist. Hence any nontrivial identity $\bw_n \approx \bw'_n$ can not be obtained from $x^2y\approx yx^2$.
\end{proof}

For each odd number $n\geq 3$, define
\[
\bw'_n  = x_1 z x_2 y \dots x_n z x_1 y  x_2 z \dots z x_n.%
\]
It is easy to see that $\bw'_n $ can be obtained from $\bw_n $ by exchanging $y$ and $z$.

\begin{lem}\label{lem: Z sat}
For each odd number $n\geq 3$, the variety $\mathbb{M}(\bz_\infty)$ satisfies the identity $\bw_n \approx \bw'_n.$
\end{lem}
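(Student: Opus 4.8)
The plan is to establish the stronger statement that the monoid $M(\bz_\infty)$ itself satisfies $\bw_n \approx \bw'_n$; since an identity holds throughout a variety precisely when it holds in a generating algebra, this gives the lemma. The one structural fact about $\bz_\infty$ that I would isolate is the following: for each $m$, between any two consecutive occurrences of the letter $x_m$ in $\bz_\infty$ there is an occurrence of some $x_{m'}$ with $m' > m$. This is immediate from the recursion $\bz_{m+1} = \bz_m x_{m+1} \bz_m$ together with an easy induction (equivalently, position $p$ of $\bz_\infty$ carries $x_j$ where $2^j$ is the exact power of $2$ dividing $p$, so the positions carrying $x_m$ are $2^m$ times an odd number, consecutive ones differ by $2^{m+1}$, and their midpoint is a multiple of $2^{m+1}$). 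The consequence I actually use is: \emph{in any nonempty subword $W$ of $\bz_\infty$, the letter of largest index occurs exactly once}, since two occurrences of it would force a strictly larger-index letter between them.

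Next I would record two elementary observations about the words themselves: $\con(\bw_n) = \con(\bw'_n) = \{x_1,\dots,x_n,y,z\}$, and --- this is where $n \geq 3$ is needed --- every letter in this set occurs at least twice in each of $\bw_n$ and $\bw'_n$ (each $x_i$ occurs twice; $y$ and $z$ occur at least $n-1 \geq 2$ times). Now fix a substitution $\varphi$ of the variables into $M(\bz_\infty)$. Recall that the non-identity elements of $M(\bz_\infty)$ are $0$ and the nonempty subwords of $\bz_\infty$, and that a product of such elements is either $0$ or the concatenation of the non-identity factors occurring in it, this concatenation being required to be a subword of $\bz_\infty$. Hence if $\varphi(\bw_n) \neq 0$, then $W := \varphi(\bw_n)$ is a subword of $\bz_\infty$ obtained by concatenating the images $\varphi(\ell)$ over the letters $\ell$ of $\bw_n$. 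If some letter $v$ of $\bw_n$ had $\varphi(v) \neq 1$, then $W$ would be nonempty and its largest-index letter $x_m$ would sit inside some $\varphi(v')$ with $\varphi(v') \neq 1$; but $v'$ occurs at least twice in $\bw_n$, so $\varphi(v')$, and with it $x_m$, would occur at least twice in $W$, contradicting the italicised fact. Thus $\varphi$ sends every letter of $\bw_n$ to $1$, so $\varphi(\bw_n) = 1$. The identical argument applies to $\bw'_n$. Finally, because $\con(\bw_n) = \con(\bw'_n)$, a substitution killing every letter of one word kills every letter of the other: so if $\varphi(\bw_n) \neq 0$ then $\varphi(\bw_n) = 1 = \varphi(\bw'_n)$, symmetrically if $\varphi(\bw'_n) \neq 0$, and otherwise both sides equal $0$. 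In all cases $\varphi(\bw_n) = \varphi(\bw'_n)$, proving $M(\bz_\infty) \models \bw_n \approx \bw'_n$.

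I do not expect a genuine obstacle: the whole argument rests on the italicised ``largest-index letter'' property of $\bz_\infty$, and everything else is bookkeeping. The points that need care are (i) pinning down the multiplication in $M(\bz_\infty)$ exactly --- that $1$ behaves as the empty word and that a nonzero product is literally a concatenation of subwords that again lies inside $\bz_\infty$; (ii) the ``at least twice'' count for $y$ and $z$, which is exactly where $n \geq 3$ enters; and (iii) observing that $\con(\bw_n) = \con(\bw'_n)$, without which the step ``$\varphi$ trivial on $\bw_n$ implies $\varphi$ trivial on $\bw'_n$'' would break down (it genuinely fails for pairs such as $x^2 \approx y^2$).
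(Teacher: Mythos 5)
Your proof is correct, and it follows essentially the same route as the paper's: the paper's one-line justification is that every subword of a Zimin word has a linear (simple) letter, while $\bw_n$ and $\bw'_n$ have the same content and no simple letters, so any nonzero evaluation must send every variable to $1$ — precisely the content of your ``largest-index letter occurs once'' observation together with your multiplicity count. The only difference is presentational: you prove the key structural fact about $\bz_\infty$ from scratch (via the $2$-adic description of positions), whereas the paper cites it as known; your write-up is more self-contained but not a different argument.
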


\begin{proof}
Since every subword of a Zimin word has a linear letter, every unavoidable word has a linear letter.
Hence if $\bw$ has no linear letters, then an assignment into $\mathbb{M}(\bz_\infty)$ that takes $\bw$ to a nonzero value must assign every letter to the value~$1$.
Thus $\mathbb{M}(\bz_\infty)$ satisfies any identity that formed by two words which have the same content and neither contains a linear letter.
In particular, $\mathbb{M}(\bz_\infty)$ satisfies the identity $\bw_n \approx \bw'_n$.
\end{proof}

\begin{thm}
The interval $[\mathbb{M}(\bz_\infty),\mathbb{B}_2^1\cap[\![ x^2y\approx yx^2]\!]]$ has cardinality $2^{\aleph_0}$.
\end{thm}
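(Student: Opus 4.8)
The plan is to reuse the architecture of the proof of Theorem~\ref{thm:AB}, with the rigid words $\bw_n$ playing the role that the hypergraphs $\mathcal G_i$ played there. For each odd $n\ge 3$ let $\mathbf M_n$ be the \emph{word monoid} $M(\bw_n)$: the Rees quotient of the free monoid over $\con(\bw_n)=\{x_1,\dots,x_n,y,z\}$ by the ideal of all words that are not factors of $\bw_n$, so that the nonzero elements of $\mathbf M_n$ are exactly the factors of $\bw_n$, with the empty factor as identity. I would deduce the theorem from three claims about these monoids: \textbf{(i)} $\mathbf M_n\in\mathbb B_2^1\cap[\![x^2y\approx yx^2]\!]$; \textbf{(ii)} $\mathbf M_n\not\models\bw_n\approx\bw'_n$; and \textbf{(iii)} $\mathbf M_m\models\bw_n\approx\bw'_n$ whenever $m\ne n$. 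Given these, for each $P\subseteq\{\,n\ge 3: n\text{ odd}\,\}$ put $\mathbb V_P:=\mathbb M(\bz_\infty)\vee\bigvee_{n\in P}\mathbb V_m(\mathbf M_n)$. By (i) and the inclusion $\mathbb M(\bz_\infty)\subseteq\mathbb B_2^1\cap[\![x^2y\approx yx^2]\!]$ (presupposed in \cite[Question~7.5]{lee}), each $\mathbb V_P$ lies in the stated interval; and if $P\ne Q$, say $n\in P\setminus Q$, then $\mathbb V_P\not\models\bw_n\approx\bw'_n$ by (ii), while by Lemma~\ref{lem: Z sat} and (iii) every join-factor of $\mathbb V_Q$ satisfies $\bw_n\approx\bw'_n$, so $\mathbb V_Q\models\bw_n\approx\bw'_n$ and hence $\mathbb V_P\ne\mathbb V_Q$. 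As there are $2^{\aleph_0}$ choices of $P$ and at most $2^{\aleph_0}$ monoid varieties in all, the interval has cardinality exactly $2^{\aleph_0}$.

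Claim (ii) is immediate: the substitution $x_u\mapsto x_u$, $y\mapsto y$, $z\mapsto z$ evaluates $\bw_n$ to itself, a nonzero element of $\mathbf M_n$, but evaluates $\bw'_n$ to $0$, since $\bw'_n$ has the same length as $\bw_n$ yet $\bw'_n\ne\bw_n$ (they differ in their second letter) and so is not a factor of $\bw_n$. For (i), the identity $x^2y\approx yx^2$ is forced by $\bw_n$ being square-free (noted in the proof of Lemma~\ref{lem: Z isoterm}): a substitution $\theta$ with $\theta(x)\ne 1$ sends $\theta(x)^2$, and hence both sides, to $0$, while if $\theta(x)=1$ both sides equal $\theta(y)$. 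The membership $\mathbf M_n\in\mathbb B_2^1$ I would obtain by embedding $\mathbf M_n$ into a finite power of $\mathbf B_2^1$, using one coordinate for each occurrence-interval of $\bw_n$ and exploiting the rigidity of the square-free word $\bw_n$, in the manner of the coordinatisations in Lemmas~\ref{lem:flexB21}, \ref{lem:cyclefree} and \ref{lem:A21}; this can also be read off from $\bw_n$ being an isoterm for $\mathbf B_2^1$ (Lemma~\ref{lem: B21 isoterm}).

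The substance of the argument is claim (iii). I would first reduce it to the assertion \textbf{(iii$'$)}: for $m\ne n$, any substitution $\theta\colon\con(\bw_m)^*\to\mathbf M_n$ with $\theta(y)\ne 1$ or $\theta(z)\ne 1$ satisfies $\theta(\bw_m)=0$. Indeed, if $\theta(y)=\theta(z)=1$ then $\theta(\bw_m)=(\theta(x_1)\cdots\theta(x_m))^2$, which equals $0$ unless every $\theta(x_i)=1$, in which case it equals $1$; the same holds verbatim for $\bw'_m$ (whose projection onto $\{x_1,\dots,x_m\}$ is again $(x_1\cdots x_m)^2$), so $\theta(\bw_m)=\theta(\bw'_m)$ in that case, and if one of $\theta(y),\theta(z)$ is nontrivial then (iii$'$), applied to $\theta$ and to $\theta\circ\sigma$ where $\sigma$ transposes $y$ and $z$, gives $\theta(\bw_m)=0=\theta(\bw'_m)$. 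To prove (iii$'$) I would set $A:=\theta^{-1}(1)$, $B:=\con(\bw_m)\setminus A$, and observe that $\theta(\bw_m)\ne 0$ forces the projection $(\bw_m)_B$ to be a square-free word whose image under the non-erasing substitution $\theta|_B$ is a factor of $\bw_n$. The crux is then a purely combinatorial fact about the words $\bw_m$: given that $y\in B$ or $z\in B$, square-freeness of $(\bw_m)_B$ already forces $B$ to retain enough of the letters $x_1,\dots,x_m$ to separate the runs of $y$'s and of $z$'s in $\bw_m$, so $(\bw_m)_B$ is a ``long'' rigid word whose $x$-scaffold has the same combinatorial shape as that of $\bw_m$; and no such word can occur (via a non-erasing substitution) inside $\bw_n$ when $m\ne n$, because the rigidity of $\bw_n$ — of the kind established through substitutions into $\mathbf B_2^1$ in Lemmas~\ref{lem: B21 isoterm} and \ref{lem: Z isoterm} — would force any occurrence to respect the $x$-scaffold of $\bw_n$, which has the wrong length and shape.

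I expect this last combinatorial statement — a precise determination of which projections $(\bw_m)_B$ can be found inside $\bw_n$ — to be the main obstacle; it is the analogue here of Lemma~\ref{lem:homommon} in the proof of Theorem~\ref{thm:AB}, and, as there, I anticipate a somewhat delicate case analysis organised around the alternating $y/z$ scaffolding of $\bw_m$ and the rigidity of $\bw_n$. The fully degenerate situations, in which $\theta$ collapses $\bw_m$ to a word containing no non-simple letter, would be handled exactly as in Lemma~\ref{lem: Z sat}.
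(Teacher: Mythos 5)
Your plan is a genuinely different route from the paper's: you separate varieties semantically by showing that the word (Rees quotient) monoids $M(\bw_n)$ lie in the interval and that $M(\bw_n)$ fails $\bw_n\approx\bw'_n$ while satisfying $\bw_m\approx\bw'_m$ for $m\neq n$, whereas the paper works syntactically, analysing the first nontautological step of an equational deduction of $\bw_n\approx\bw'_n$ from $\Sigma\cup\{\bw_i\approx\bw'_i:i\in S\}$ and showing it must be trivial. The framework of your argument is sound: $\bw_n$ being an isoterm for $\mathbf{B}_2^1$ (Lemma~\ref{lem: B21 isoterm}) does give $M(\bw_n)\in\mathbb{B}_2^1$, square-freeness of $\bw_n$ does give $M(\bw_n)\models x^2y\approx yx^2$, your claim (ii) is immediate, and your reduction of (iii) to (iii$'$) via the $y\leftrightarrow z$ transposition is correct.

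However, claim (iii$'$) is exactly where the theorem lives, and you leave it as an acknowledged ``main obstacle.'' Your intermediate remark, that square-freeness of $(\bw_m)_B$ alone forces $B$ to keep an $x$-scaffold of ``the same combinatorial shape as that of $\bw_m$,'' is not accurate as stated: for instance with $m=3$ and $B=\{y,x_1,x_2\}$ one has $(\bw_3)_B=x_1yx_2yx_1x_2y$, which is square-free but whose $x$-scaffold looks nothing like that of $\bw_3$. What actually kills such embeddings in the paper's argument is a chain of more specific facts about $\bw_n$: every factor of length at least two occurs exactly once (so every non-erased letter of $\bw_m$ is sent to a single letter of $\bw_n$); a letter occurring $\geq 3$ times in $\bw_m$, such as $y$, must map to $y$ or $z$ in $\bw_n$; consecutive occurrences of $y$ (and of $z$) in both $\bw_m$ and $\bw_n$ are exactly four positions apart, which prevents any erasure between them; and finally the two occurrences of $x_k$ in $\bw_m$ bound a factor of length $2m+1$, while the corresponding factor in $\bw_n$ has length $2n+1$, giving the contradiction $m=n$. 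None of this case analysis appears in the proposal, and square-freeness by itself is not enough to drive it. So while your architecture would work, the proposal has a genuine gap at the combinatorial core of the proof.
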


\begin{proof}
Let $\mathbb{O}$ be the odd natural numbers with each number is no less than $3$. Fix a subset $S$ of $\mathbb{O}$ and let $n$ be any number in $\mathbb{O}$. Let $\Sigma$ be a set of identities satisfied by the variety $\mathbb{B}_2^1\cap[\![ x^2y\approx yx^2]\!]$ and $\Sigma_S$ be the set $\Sigma \cup \{ \bw_i \approx \bw'_i: i\in S \}$. We will show that $\Sigma_S \vdash \bw_n \approx \bw'_n$ only if $n \in S$. That is for each pair of subsets $P, Q$ of $\mathbb{O}$, the sets of identities $\Sigma_{P}$ and $\Sigma_{Q}$ define the same subvariety of $V$ if and only if $P = Q$. Since there are uncountably many subsets of the odd natural numbers, there are uncountably many subvarieties in the interval $[\mathbb{M}(\bz_\infty),\mathbb{B}_2^1\cap[\![ x^2y\approx yx^2]\!]]$ by Lemma~\ref{lem: Z sat}.

Suppose that $\Sigma_{S} \vdash \bw_n \approx \bw'_n $. Then we can select
a number $m$ and pairwise distinct words $\bu_1,  \bu_2, \ldots, \bu_m$ with $\bu_1= \bw_n,
\bu_m= \bw'_n$, so that for each $i \leq  m$, there is a substitution $\theta_i$ and an identity $\bp_i \approx \bq_i \in \Sigma_{S}$ so that $\bu_{i+1}$ is obtained from $\bu_i$ by replacing a subword of the form $\theta_i(\bp_i)$ in $\bu_i$ with the subword $\theta_i(\bq_i)$.

First we consider the deduction from $\bu_1$ to $\bu_2.$  If $\bp_1 \approx \bq_1$ is an identity from $\Sigma$, then $\bu_1 =\bu_2$ by Lemma~\ref{lem: Z isoterm}. If $\bp_1 \approx \bq_1 \in \{ \bw_i \approx \bw'_i: i\in S \}$, then for some $j \in S$,  either $\bu_1=\be \theta_1(\bw_j) \mathbf{f}$ and $\bu_2=\be \theta_1(\bw'_j) \mathbf{f}$, or $\bu_1=\be \theta_1(\bw'_j) \mathbf{f}$ and $\bu_2=\be \theta_1(\bw_j) \mathbf{f}$. By symmetry, we may assume that
$\bu_1=\be \theta_1(\bw_j) \mathbf{f}$ and $\bu_2=\be \theta_1(\bw'_j) \mathbf{f}$.

As we are working in the lattice of monoid varieties, we need to consider substitutions of the form $\theta_1:\mathcal{X}^*\to\mathcal{X}^*$.    Note that if $\theta_1(y)=\theta_1(z)=1$ then $\theta_1(\bw_j)=\theta_1(\bw'_j)$ so that $\bu_1=\bu_2$.
We will find that all other possibilities lead to a contradiction.
Thus we assume, for contradiction, that at least one of $y$, $z$ is not assigned $1$ by $\theta_1$.
This implies, in particular, that $\theta_1(\bw_j)$ contains a (nonempty) subword that appears at least $j$ times, without overlap (namely, $\theta_1(y)$ or $\theta_1(z)$).
Thus we cannot have $j>n$, because in this case
 $\bu_1=\bw_n$ does not have a subword appearing $j$ times.
 Therefore we may assume that $j<n$.
 Since each of the letters $\{x_1, \ldots, x_j, y, z\}$ in $\bw_j$ occurs no less than twice in $\bw_j$, the image of each letter in $\{x_1, \ldots, x_j, y, z\}$ under $\theta_1$ is either $1$ or occurs no less than twice in $\bu_1=\bw_n$.
 But it is easy to see that any subword with length at least two occurs just once in $\bu_1=\bw_n$. Therefore
 \begin{equation}
 |\mathsf{con}(\theta_1(x_1))|,\dots,|\mathsf{con}(\theta_1(x_j))|,
|\mathsf{con}(\theta_1(y))|,|\mathsf{con}(\theta_1(z))|\leq 1.\tag{$*$}\label{eqn:1}
\end{equation}
We are assuming that at least one of $y,z$ is not assigned $1$; let us suppose $|\theta_1(y)|=1$, with the case of $|\theta_1(z)|=1$ by symmetry.   As $\theta_1(y)$ occurs no less than three times in $\bw_j$, we cannot have $\theta_1(y)=x_i$ for some~$i$, as $x_i$ appears just twice.  So $\theta_1(y)\in\{y,z\}$.  Consecutive instances of $y$ (and $z$, respectively) in $\bw_n$ are separated by subwords of length four.  As this is true in $\bw_j$ also, and  $\theta_1(y)\in\{y,z\}$, it follows from Equation~\eqref{eqn:1} that all letters appearing in $\bw_j$ between occurrences of $y$ are assigned letters by $\theta_1$ (that is, are not assigned $1$).  As all letters in $\con(\bw_j)$ have an occurrence between occurrences of $y$, it follows that $\{\theta_1(y),\theta_1(z)\}=\{y,z\}$ and $\theta_1(x_k) =x_i$ for some $i,k$.  However this leads to a contradiction because the length of subword $x_k \dots x_k$ of $\bw_j$ is $2j+1$, while the length of subword $x_i \dots x_i$ of $\bw_n$ is $2n+1$. Thus the substitution $\theta_1$ does not exist if $j <n$. Therefore in every case we have $\bu_2=\bu_1$. Consequently, $\bu_1=\bu_2=\dots=\bu_m$ and so $\Sigma_S \vdash \bw_n \approx \bw'_n$ only if $n \in S$, as required.
\end{proof}

Now we turn to the task of showing that $\mathbb{M}(\bz_\infty)$ is not defined by any finite set of identities within $\mathbb{B}_2^1$.

\begin{lem}\label{lem: Z many isoterms}
For each $n \geq 1$, the following words are the isoterms for $\mathbb{M}(\bz_\infty)$:
\begin{enumerate}[\rm(i)]
\item $a_1 \dots a_n,$
\item $a_1 \dots a_n s a_1 \dots a_n$,
\item $tasa$, $asat$,
\item $asbatb$,
\item $b^{\delta_1} a_1 b a_2 \dots  b a_n b^{\delta_2}$ for some $\delta_1,\delta_2 \in \{0, 1\}$, %
\item $b^{\delta_1} a_1 b a_2 \dots  b a_n b^{\delta_2} s b^{\delta_3} a_k b a_{k+1} \dots  b a_m b^{\delta_4}$ for some  $\delta_1, \ldots, \delta_4 \in \{0, 1\}$ and $1\leq n,k \leq m$. %
\item $b^{\delta_1} \bm  b^{\delta_2} a_0 b \bp b^{\delta_3} \bq b \bm b^{\delta_4} \bn b a_{n+1} b^{\delta_5} \bq b^{\delta_6}$ for some  $\delta_1, \ldots, \delta_6 \in \{0, 1\}$, $1\leq p <q <m \leq n$, $\bp =a_1 b \dots a_{p-1} b a_p$, $\bq= a_{p+1} b \dots  a_{q-1}b a_{q}$, $\bm = a_{q+1} b \dots a_{m-1} b a_m$ and $\bn = a_{m+1} b  \dots a_{n-1} b a_n$. %
\end{enumerate}
\end{lem}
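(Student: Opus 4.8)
The plan is to prove the lemma word by word. Since $\mathbb{M}(\bz_\infty)=\mathbb{V}_m(M(\bz_\infty))$ has exactly the same identities as the monoid $M(\bz_\infty)$ itself, it suffices in each case to show: if $M(\bz_\infty)\models\bw\approx\bw'$ for one of the listed words $\bw$, then $\bw'=\bw$. Every argument proceeds by choosing a family of substitutions $\varphi\colon\mathcal{X}^*\to M(\bz_\infty)$ --- that is, assignments of each letter to $1$, to $0$, or to a subword of $\bz_\infty$ --- and extracting the constraints forced on $\bw'$ from the equalities $\varphi(\bw)=\varphi(\bw')$. The engine is that a \emph{nonzero} value of $\varphi(\bw)$ forces the concatenation of the images of the letters of $\bw$ to be a subword of $\bz_\infty$, which is an extremely rigid condition; in particular, sending one letter to $1$ and another to a single letter $x_j$ (for which $x_j^2=0$ in $M(\bz_\infty)$, since $\bz_\infty$ has no factor $x_jx_j$) recovers the content/simple-letter bookkeeping, playing exactly the role that the element $a\in{\bf B}_2^1$ with $a^2=0$ played in Lemmas~\ref{lem: B21 xyxy}--\ref{lem: B21 isoterm}, while sending the nonsimple letters to carefully chosen subwords of $\bz_\infty$, built from the recursion $\bz_{n+1}=\bz_n x_{n+1}\bz_n$, pins down their relative positions.

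I would organise the cases in increasing complexity. For (i), assigning the $a_i$ to consecutive letters of any length-$n$ window of $\bz_\infty$ on distinct letters, together with the three standard content/head/tail substitutions, immediately forces $\bw'=a_1\cdots a_n$. Cases (ii), (iii), (iv) each have only one or two nonsimple letters: one first shows that $\con$, $\simp$, $\h$, $\tail$ are preserved and that no square subword and no ``forbidden'' two-letter subword can occur in $\bw'$ --- each such assertion coming from a single substitution into $M(\bz_\infty)$ that makes a square or a bad pair vanish --- and then the surviving combinatorial possibilities force the exact form, just as in the proof of Lemma~\ref{lem: B21 isoterm}. Cases (v) and (vi) are the ``$b$-alternating'' families: here $b$ should be assigned a short subword of $\bz_\infty$ acting as a separator (so that its square kills the word), the $a_i$ assigned letters that are then obliged to interleave with the $b$-image, and the positions of the $a_i$ between consecutive $b$'s are read directly off the resulting subword of $\bz_\infty$; the boundary flags $\delta_1,\dots$ are detected according to whether or not a copy of the $b$-image sits at an end. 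Case (vii) is left to last and is treated by the same philosophy, now matching the nested block structure of $\bm,\bn,\bp,\bq$ against nested occurrences of $\bz_k x_{k+1}\bz_k$ inside $\bz_\infty$.

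The main obstacle will be (vi) and especially (vii): one must produce, for each admissible choice of the boundary flags $\delta_i$, a substitution into $M(\bz_\infty)$ realising the word as a \emph{rigid} subword of $\bz_\infty$ --- one that simultaneously records every $\delta_i$ and every relative order among $a_0,\dots,a_{n+1}$ and the blocks $\bm,\bn,\bp,\bq$ --- and this requires a genuinely careful exploitation of the self-similar structure of $\bz_\infty$ rather than a single generic window. Once such substitutions are in hand there is no further subtlety, since we are reasoning directly about $\operatorname{Id}(M(\bz_\infty))$, so no ``deduction'' of the identity from other identities can interfere. The bookkeeping is lengthy because of the nominal $2^6$ flag combinations in (vii), although the reversal symmetry of words together with the exchange symmetries among the blocks collapses these to a small number of genuinely distinct configurations.
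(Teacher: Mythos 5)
Your overall strategy is the same one the paper uses: realise each listed word, via a substitution, as a factor of $\bz_\infty$, and let the rigidity of such factors force $\bw'=\bw$ (the paper additionally streamlines the boundary flags $\delta_i$ by observing that subwords of isoterms are isoterms and by absorbing boundary occurrences of $b$ into the images of adjacent letters such as $s$, $a_0$, $a_p$, $a_{m+1}$, $a_{n+1}$, so the $2^6$ case analysis you worry about never arises). The difficulty is that your proposal stops exactly where the mathematical content begins. For (v)--(vii) you explicitly defer the construction of the substitutions (``once such substitutions are in hand there is no further subtlety''), but those constructions \emph{are} the proof: the paper's key device is the ruler-sequence assignment $b\mapsto x_0$, $a_{2^{i-1}+2^ik}\mapsto x_i$, which maps $b a_1 b a_2 b\dots b a_n b$ onto a prefix of $\bz_\ell$ where $2^\ell\le n<2^{\ell+1}$; and for (vi) and (vii) one must further decompose $\bz_\ell=AB\bv_\ell$ (respectively $ABCD\bv_\ell$) and send the connector letters $s$, $a_0$, $a_{n+1}$ to words such as $\bv_\ell x_{\ell+1}A$, so that the repeated blocks land on the two copies of $\bz_\ell$ inside $\bz_{\ell+1}$ and the whole image is a factor of $\bz_{\ell+2}$. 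Nothing in your outline produces these assignments, nor argues that the nested doubled-block words of (vii) embed into $\bz_\infty$ at all --- and that embeddability is precisely the point at issue.

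Moreover, the one concrete substitution you do propose cannot exist: for (i) you suggest assigning $a_1,\dots,a_n$ to the consecutive letters of a length-$n$ window of $\bz_\infty$ with distinct letters, but every second letter of $\bz_\infty$ is $x_0$, so $\bz_\infty$ has no factor of length at least $3$ with pairwise distinct letters. Case (i) is of course still easy (the paper inducts from the isoterms $a$ and $ab$; alternatively take $a_1\mapsto \bz_0$ and $a_i\mapsto x_{i-1}\bz_{i-2}$ for $i\ge 2$, whose image is $\bz_{n-1}$), but the slip indicates that the self-similar structure of $\bz_\infty$ --- which is the entire difficulty in (v)--(vii) --- has not actually been engaged with. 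As it stands the proposal is a plausible plan rather than a proof: it names the right kind of argument but leaves the essential constructions, and hence the lemma itself, unestablished.
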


\begin{proof}
(i) First it is easy to see that the words $a$ and $ab$ are isoterms for $\mathbb{M}(\bz_\infty)$. Thus the word $a_1 \dots a_n$ is an isoterm for $\mathbb{M}(\bz_\infty)$ by induction.

(ii) First we show that $absab$ is an isoterm for $\mathbb{M}(\bz_\infty)$. Let $\varphi: \mathcal{X} \rightarrow \mathcal{X}^{+}$ be the substitution that maps $a$ to $x_0$ and $b$ to $x_1x_0$ and $s$  to $x_2$. Then $\varphi(absab) = x_0x_1x_0 x_2 x_0x_1x_0 =\bz_{2}$, and so $absab$ is an isoterm for $\mathbb{M}(\bz_\infty)$. Thus the word $a_1 \dots a_n s a_1 \dots a_n$ is an isoterm for $\mathbb{M}(\bz_\infty)$ by induction.

(iii) Let $\varphi: \mathcal{X} \rightarrow \mathcal{X}^{+}$ be the substitution that maps $a$ to $x_0$ and $s$ to $x_1$ and $t$ to $x_2$. Then $\varphi(tasa) = x_2 x_0x_1x_0 $ and $\varphi(asat) = x_0x_1x_0 x_2 $. It is obvious that both of them are subwords of $\bz_{2}$. Thus $tasa$ and $asat$ are isoterms for $\mathbb{M}(\bz_\infty)$.

(iv) Let $\varphi: \mathcal{X} \rightarrow \mathcal{X}^{+}$ be the substitution that maps $a$ to $x_0$, $b$ to $x_1$, $s$ to $x_1x_0x_2x_0$ and $t$ to $x_3x_0$. Then $\varphi(asbatb) = x_0 x_1x_0x_2x_0x_1x_0x_3x_0x_1$, which is the subword of $\bz_{3}$. Thus $asbatb$ is an isoterm for $\mathbb{M}(\bz_\infty)$.

(v) Note that if a word is an isoterm, then all of its subwords are isoterms. Therefore we only need to show that the word $\bw =b a_1 b a_2 \dots  b a_n b$ is an isoterm, that is  $\delta_1=\delta_2=1$. Let $\varphi: \mathcal{X} \rightarrow \mathcal{X}^{+}$ be the substitution that maps $b$ to $x_0$, $a_{2^{i-1}+2^{i}k}$ to $x_i$ for each $k\in \mathbb{N}$, that is,
\[
a_i \mapsto %
\begin{cases}
x_{1} & \text{if } i =1+2k, \\ %
x_{2} & \text{if } i =2+4k, \\ %
x_{3} & \text{if } i =4+8k , \\ %
\ldots & \text{ } \ldots. %
\end{cases}
\]
Generality is not lost by assuming that $2^{\ell} \leq n < 2^{\ell+1}$ for some $\ell$. Then
$\varphi(\bw)=\bz$  is a prefix of $\bz_{\ell}$. Thus $\bw$ is an isoterm for $\mathbb{M}(\bz_\infty)$.

(vi) By the same arguments in (v) we may assume that $\delta_1=\delta_4=1$. In the following, we will show that the words can be mapped by a substitution into~$\bz_\infty$.
Without loss of generality, we may assume that $\delta_2 = \delta_3 = 1$ since if $\delta_2 =0$ (resp.~$\delta_3 =0$), then we can map $s$ to $bs$ (resp.~$sb$).
%
Therefore it suffices to show that
$\bw = b a_1 b a_2 \dots  b a_n b s b a_k b a_{k+1} \dots  b a_m b$
 is an isoterm for $\mathbb{M}(\bz_\infty)$. If $n<k$, then the result is true by (v). Therefore we may assume that $n \geq k$. Let $\varphi: \mathcal{X} \rightarrow \mathcal{X}^{+}$ be the substitution that maps $b$ to $x_0$,  and $a_{2^{i-1}+2^{i}k}$ to $x_i$ for each $k\in \mathbb{N}$. Then by~(v),
\begin{align*}
&\varphi(b a_1 b a_2 \dots ba_{k-1} ba_k \dots b a_n b)\\%
=& \varphi(b a_1 b a_2 \dots ba_{k-1}) \varphi (ba_k \dots b a_n b) \\%
=& AB
\end{align*}
is a prefix of $\bz_{\ell}$ where $\ell$ is the number such that $2^{\ell} \leq n < 2^{\ell+1}$. Write $\bz_\ell$ as $\bz_{\ell}= AB\bv_{\ell}$ and let $s$ map to $\bv_{\ell} x_{\ell+1} A$, and $a_{n+1}$ to $\bv_{\ell} x_{\ell+2}$, and for each $i \geq 3$, $a_{n-1+2^{i-1}+2^{i}k}$ to $\overline{\bz}_{\ell} x_{\ell+i}$,
\[
a_{n-1+i} \mapsto %
\begin{cases}
\overline{\bz}_{\ell} x_{l+1} & \text{if } i =1+2k, \\ %
\overline{\bz}_{\ell} x_{l+2} & \text{if } i =2+4k, \\ %
\overline{\bz}_{\ell} x_{l+3} & \text{if } i =4+8k , \\ %
\ldots & \text{ } \ldots %
\end{cases}
\]
where $x_0\overline{\bz}_{\ell} =\bz_{\ell}$. Then
\begin{align*}
\varphi(\bw) = &\varphi(b a_1 \dots ba_k \dots  b a_n b s b a_k  \dots  b a_n b a_{n+1}b \dots   b x_m b )\\%
=& \varphi(b a_1\dots ba_{k-1}) \cdot \varphi (ba_k \dots b a_n b) \cdot  \varphi(s) \cdot \varphi(b a_k \dots  b a_n b) \cdot \varphi(a_{n+1}) \cdot \varphi(bx_{n+2}) \dots \varphi(b x_m b ) \\%
=& \underbrace{ A \cdot B \cdot \bv_{\ell}}_{\bz_{\ell}} x_{\ell+1} \underbrace{A \cdot B \cdot\bv_{\ell}}_{\bz_{\ell}} x_{\ell+2} \cdot \bz_{\ell} x_{\ell+1} \dots \bz_{\ell+h} x_{\ell+h} x_0
\end{align*}
where $n-1+2^{h-1}+2^{h}k = m$ for some $k$. Hence $\varphi(\bw)$  is a subword of $\bz_{\infty}$. Therefore~$\bw$ is an isoterm for $\mathbb{M}(\bz_\infty)$.

(vii) By the same arguments in (v) we may assume that $\delta_1=\delta_6=1$.
In the following, we will show that the words can be mapped by a substitution into $\bz_\infty$. Without loss of generality, we may assume that $\delta_2 = \dots= \delta_5 = 1$ since if $\delta_2=0$ (resp.~$\delta_3=0, \delta_4=0, \delta_5 =0$), then we can map $a_0$ (resp.~$a_p, a_{m+1}, a_{n+1}$) to $ba_0$ (resp.~$a_pb, ba_{m+1}, a_{n+1}b$). Therefore it suffices to show that
\[
\bw=b~\bm~ b a_0 b ~\bp~ b ~\bq~ b ~\bm~ b~\bn~ b a_{n+1} b~\bq~b
\]
is an isoterm for $\mathbb{M}(\bz_\infty)$.
Let $\varphi: \mathcal{X} \rightarrow \mathcal{X}^{+}$ be the substitution that maps $b$ to $x_0$, $a_{2^{i-1}+2^{i}k}$ to $x_i$ for each $k\in \mathbb{N}$. Then by (v),
\begin{align*}
&\varphi(b~\bp~ b~\bq ~b ~\bm~ b ~\bn ~b )\\%
=& \varphi(b \bp) \varphi (b \bq) \varphi (b\bm) \varphi (b \bn b)\\%
=& ABCD
\end{align*}
is a prefix of $\bz_{\ell}$ where $\ell$ is the number such that $2^{\ell} \leq n < 2^{\ell+1}$. Let $\bz_{\ell}= ABCD \bv_{\ell}$ and  $a_0$ map to $D' \bv_{\ell}x_{\ell+1}$ where $x_0 D'= D$ and $a_{n+1}$ map to $\bv_{\ell}x_{\ell+2} A$ in $\varphi$. Then 
\[
\varphi(\bw)= \underbrace{C D  \bv_{\ell}}_{\mbox{the suffix of $\bz_\ell$}}x_{\ell+1} \underbrace{ABCD \bv_{\ell}}_{\bz_\ell}  x_{\ell+2} \underbrace{A x_0}_{\mbox{the prefix of $\bz_\ell$}}
\]
which is a subword of $\bz_{\ell+2}$. Therefore $\bw$ is an isoterm for $\mathbb{M}(\bz_\infty)$.
\end{proof}

\begin{lem}\label{lem:subword isoterm}
Any proper subword of $\bw_n$ is an isoterm for $\mathbb{M}(\bz_\infty)$.
\end{lem}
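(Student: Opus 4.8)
The plan is to reduce the statement to Lemma~\ref{lem: Z many isoterms}. Recall two elementary facts, both used in that lemma's proof: isotermness for $\mathbb{M}(\bz_\infty)$ is unaffected by bijectively renaming variables, and any subword of an isoterm is an isoterm. Hence it is enough to show that each proper subword $\bu$ of $\bw_n$ is, after renaming, a subword of one of the seven words of Lemma~\ref{lem: Z many isoterms} --- and for those $\bu$ for which this fails, to run the method of that lemma directly, exhibiting a substitution $\varphi$ (injective on $\con(\bu)$) that carries $\bu$ onto a factor of some Zimin word $\bz_\ell$. Since $\bw_n$ is fully alternating between the $x_i$'s and the separators $y,z$, a proper subword $\bu$ is a contiguous block specified by two endpoints and missing a nonempty prefix or a nonempty suffix of $\bw_n$.

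To organise the cases I would first use the reversal symmetry of $\bw_n$: reading $\bw_n$ right-to-left and relabelling $x_i\mapsto x_{n+1-i}$ returns $\bw_n$ --- here the hypothesis that $n$ is odd is used, so that the separator word $(yz)^{n-1}y$ is itself a palindrome and $y,z$ are individually fixed. So one may assume $\bu$ omits a nonempty prefix. I would then stratify by the pair of endpoints, equivalently by how much of the left and right portions of $\bw_n$ survives. When $\bu$ omits enough at the ends it breaks into one or two alternating stretches carrying only mild repetition, and a direct inspection identifies it with a renaming of a subword of one of (iii)--(vii) of Lemma~\ref{lem: Z many isoterms}; the subtle cases are the \emph{long} subwords --- those obtained from $\bw_n$ by deleting only a few letters near one or both ends --- which still carry the characteristic triples $x_iyx_j$ and $x_izx_j$, and for these one builds a suitable $\varphi$ into some $\bz_\ell$ by hand.

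I expect the long subwords to be the real obstacle, for the same reason the constructions in Lemma~\ref{lem: Z many isoterms} are delicate. A witnessing $\varphi$ is squeezed from two sides: every two-letter factor that persists in $\bu$ must be sent to a legal two-letter factor of $\bz_\infty$, and the legal ones are precisely $x_0x_j$ and $x_jx_0$ with $j\ge1$, which already forces the pieces $\varphi(t)$ to interlock rigidly about their occurrences of $x_0$; yet simultaneously distinct variables of $\bu$ must receive images that remain distinguishable inside the single word $\varphi(\bu)$. Reconciling legality at every junction with this injectivity --- and doing so for each of the finitely many shapes of long subword --- is the heart of the proof.
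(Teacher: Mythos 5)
Your preliminary reductions (renaming invariance, the fact that subwords of isoterms are isoterms, and the reversal symmetry that lets you assume a prefix has been deleted) are all sound, and the paper uses the same elementary observations. The genuine gap is exactly where you place ``the heart of the proof''. The subwords that cannot be obtained, after renaming, as subwords of the seven families of Lemma~\ref{lem: Z many isoterms} include in particular the two maximal proper subwords of $\bw_n$ (delete the first $x_1$ or the last $x_n$): these contain both separators $y$ and $z$ with many occurrences each, whereas every word in families (v)--(vii) has a single repeated separator $b$, so no renaming can identify them with those shapes; indeed already medium-length factors containing two $y$'s, two $z$'s and a repeated $x_i$ escape the list. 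For all such cases you only promise to ``build a suitable $\varphi$ into some $\bz_\ell$ by hand'', and no substitution is produced, so essentially the whole content of the lemma is deferred to the undone step. Moreover, even producing an embedding would not by itself finish the argument: from $\mathbb{M}(\bz_\infty)\models \bu\approx\bu'$ and $\varphi(\bu)$ a factor of $\bz_\ell$ you only obtain the equality of words $\varphi(\bu')=\varphi(\bu)$, and when letters are sent to words of length greater than one you still need a recoverability argument forcing $\bu'=\bu$; for the long subwords this recoverability is precisely the hard point, not a routine check as for the short, rigidly shaped words of Lemma~\ref{lem: Z many isoterms}.

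The paper's proof avoids this entirely and is genuinely different from your plan at the decisive step. It first notes that every proper subword of $\bw_n$ is a subword of one of the two maximal ones, so by symmetry it suffices to treat $\bw$, obtained by deleting the initial $x_1$. Then, instead of embedding $\bw$ into a Zimin word, it takes an arbitrary identity $\bw\approx\bw'$ of $\mathbb{M}(\bz_\infty)$ and deletes letters: restricting to $\{x_1,\dots,x_n\}$ gives $x_2\cdots x_nx_1x_2\cdots x_n$, an isoterm by Lemma~\ref{lem: Z many isoterms}(ii), while restricting to $\{y,x_1,x_2,x_4,\dots,x_{n-1}\}$ and to $\{y,x_1,x_3,\dots,x_n\}$ (and dually with $z$) gives isoterms of type (vi); the corresponding restrictions of $\bw'$ are therefore equal to those of $\bw$, and combining these forced positional constraints pins down every block of $\bw'$ and yields $\bw'=\bw$. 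To salvage your route you would have to supply, for each long shape, both the explicit embedding and the recoverability argument; otherwise the deletion argument just described is the way to close the gap.
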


\begin{proof}
It is easy to see that any proper subword of $\bw_n$ must be a subword of one of the words
$\bw  =  y x_2 z \dots x_n y x_1 z  x_2 y \dots x_{n-1} y x_n$ and $\overline{\bw} = x_1 y x_2 z \dots x_n y x_1 z  x_2 y \dots x_{n-1} y$. Note that if a word is an isoterm, then all of its subwords are also isoterms. Therefore it suffices to show that both $\bw$ and $\overline{\bw}$ are isoterms for $\mathbb{M}(\bz_\infty)$. By symmetry, we will show that $\bw$ is an isoterm for $\mathbb{M}(\bz_\infty)$.

Suppose that $\mathbb{M}(\bz_\infty)$ satisfies some nontrivial identity $\bw \approx \bw'$.
Since
\[
\bw_{\{x_1,x_2, \ldots, x_n\}} = x_2 \dots x_n x_1 x_2 \dots x_n,
 \]
it follows from Lemma~\ref{lem: Z many isoterms} (ii) that $\bw_{\{x_1,x_2, \ldots, x_n\}}$ is an isoterm for  $\mathbb{M}(\bz_\infty)$. Therefore
\[
\bw'_{\{x_1,x_2, \ldots, x_n\}}= \bw_{\{x_1,x_2, \ldots, x_n\}} = x_2 \dots x_n x_1 x_2 \dots x_n,
\]
and so
\begin{enumerate}
\item[$(\rm a)$] $\bw'= X_1 x_2 X_2 x_3 X_3\dots x_n X_n x_1 X_{1'} x_2 X_{2'} \dots x_n X_{n'}$ for some $X_i, X_{i'} \in \{y,z\}^{*}$.
\end{enumerate}
Since
\[
\bw_{\{y, x_1, x_2, x_4, \ldots, x_{n-1}\}} = y x_2 y x_4 \dots y x_{n-1} y x_1 x_2 y \dots x_{n-1} y,
\]
it follows from Lemma~\ref{lem: Z many isoterms}(vi) that $\bw_{\{y, x_1, x_2, x_4, \ldots, x_{n-1}\}}$ is an isoterm. Therefore
\[
\bw'_{\{y,x_1, x_2, x_4, \ldots, x_{n-1}\}} =\bw_{\{y, x_1, x_2, x_4, \ldots, x_{n-1}\}} = y x_2 y x_4 \dots y x_{n-1} y x_1 x_2 y \dots x_{n-1} y,
\]
and so by $(\rm a)$, we have
\begin{enumerate}
\item[$(\rm b)$] $y$ is simple in each of $
X_1, X_2X_3, \ldots, X_{n-1}X_{n}, X_{2'}X_{4'}, \ldots, X_{(n-1)'}X_{n'}$ and $y$ does not occur in $X_{1'}$.
\end{enumerate}
Since
\[
\bw_{\{y, x_1, x_3, x_5, \ldots, x_{n}\}} = y x_3 y x_5 \dots y  x_{n} y x_1 y x_3 \dots x_{n-2} y x_{n},
\]
it follows from Lemma~\ref{lem: Z many isoterms}(vi) that $\bw_{\{y,x_3, x_5, \ldots, x_{n}\}}$ is an isoterm. Therefore
\[
\bw'_{\{y, x_1, x_3, x_5, \ldots, x_{n}\}} = \bw_{\{y, x_1, x_3, x_5, \ldots, x_{n}\}} = y x_3 y x_5 \dots y  x_{n} y x_1 y x_3 \dots x_{n-2} y x_{n}
\]
and so by $(\rm a)$, we have
\begin{enumerate}
\item[$(\rm c)$] $y$ is simple in each of $
X_1X_2, \ldots, X_{n-2} X_{n-1}, X_{n}, X_{1'}X_{2'}, \ldots, X_{(n-2)'}X_{(n-1)'}$ and $y$ does not occur in  $X_{n'}$.
\end{enumerate}
Now it follows from $(\rm b)$ and $(\rm c)$ that
\begin{enumerate}
\item[$(\rm d)$] $y$ is simple in each of $
X_1, X_3, \ldots, X_{n}, X_{2'}, \ldots, X_{(n-1)'}$ and $y$ does not occur in $
X_2, X_4, \ldots, X_{n-1}, X_{1'}, \ldots, X_{n'}$.
\end{enumerate}
By a similar argument we can show that
\begin{enumerate}
\item[$(\rm e)$] $z$ is simple in each of $
X_2, X_4, \ldots, X_{n-1}, X_{1'}, \ldots, X_{n'}$ and $z$ does not occur in $
X_1, X_3, \ldots, X_{n}, X_{2'}, \ldots, X_{(n-1)'}$.
\end{enumerate}
Therefore it follows from $(\rm a), (\rm d), (\rm e)$ that $X_1= X_3 = \dots= X_{n} = X_{2'}= \dots = X_{(n-1)'}= y$ and $X_2= X_4= \dots= X_{n-1}= X_{1'}= \dots = X_{n'} = z$, and so $\bw'=\bw$.
\end{proof}

\begin{lem}\label{lem:map isoterm}
A word obtained from $\bw_n$ by replacing some subword of length at least two by a linear letter is an isoterm for $\mathbb{M}(\bz_\infty)$.
\end{lem}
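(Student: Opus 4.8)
The plan is to run, for the word $\mathbf{w}$ produced from $\bw_n$ by the described replacement, the projection argument that proves Lemma~\ref{lem:subword isoterm}. Write $\bw_n=\bp\,\br\,\bq$, where $\br$ is the factor being replaced and $|\br|\geq 2$, and set $\mathbf{w}=\bp\,t\,\bq$ with $t$ a letter not occurring in $\bw_n$; suppose $\mathbb{M}(\bz_\infty)\models\mathbf{w}\approx\bw'$, so that I must deduce $\bw'=\mathbf{w}$. First I would record the coarse invariants in the usual way: substituting one letter of $\con(\mathbf{w})$ to $x_0$ and every other letter to $1$, and using that $x_0^2$ is not a subword of $\bz_\infty$, forces $\con(\bw')=\con(\mathbf{w})$ and $\simp(\bw')=\simp(\mathbf{w})$; in particular $t$ is a simple letter of $\bw'$, so $\bw'$ and $\mathbf{w}$ can differ only in how they interleave their common letters.

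The reconstruction of $\bw'$ then proceeds, as in Lemma~\ref{lem:subword isoterm}, by projecting the identity onto sub-alphabets on which the projection of $\mathbf{w}$ is already known to be an isoterm via Lemma~\ref{lem: Z many isoterms}. Because $|\br|\geq 2$ and consecutive letters of $\bw_n$ alternate between an $x$-letter and a letter of $\{y,z\}$, the factor $\br$ deletes at least one occurrence of an $x$-letter and at least one occurrence of a letter of $\{y,z\}$. I would first project onto carefully chosen sets of the surviving $x$-letters together with $t$, arranging (this uses only the very regular shape of $\bw_n$) that each such projection of $\mathbf{w}$ is one of the words in Lemma~\ref{lem: Z many isoterms}(i)--(iv), with $t$ and/or a surviving once-occurring $x$-letter playing the role of the simple letters of those templates; this pins down the relative order of all $x$-letters in $\bw'$ and the position of $t$ among them. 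I would then project, exactly as in the proof of Lemma~\ref{lem:subword isoterm}, onto $\{y\}\cup\{x_i:i\text{ odd}\}$, onto $\{y\}\cup\{x_i:i\text{ even}\}$, and onto the two analogues with $z$ in place of $y$, inserting $t$ into the alphabet precisely when deleting $\br$ would otherwise bring two $y$'s, or two $z$'s, into contact; each resulting projection of $\mathbf{w}$ is covered by the isoterm families of Lemma~\ref{lem: Z many isoterms}(v)--(vii), so the equalities $\bw'_A=\mathbf{w}_A$, together with the $x$/$t$-projections, force every occurrence of $y$ and of $z$ in $\bw'$ to lie in the same slot (relative to the $x$-letters and to $t$) as in $\mathbf{w}$. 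This gives $\bw'=\mathbf{w}$. The degenerate cases in which $\br$ is a prefix or a suffix of $\bw_n$ (so that $\bp$ or $\bq$ is empty and $t$ is an extreme letter of $\mathbf{w}$) are slightly different and are handled separately.

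The main obstacle is the book-keeping behind ``arranging'' and ``covered'': one has to split into cases according to where the deleted factor $\br$ sits inside $\bw_n$ --- in particular whether it straddles the boundary between the two blocks $x_1\cdots x_n$ of $\bw_n$, and whether it swallows an entire occurrence of $y$ or of $z$ --- and check in each case that the chosen projections of $\mathbf{w}$ genuinely fall into the templates of Lemma~\ref{lem: Z many isoterms}. The straddling and swallowing cases are the delicate ones, as they disturb the otherwise clean alternation on which those templates rely. An alternative that sidesteps some of this is to follow the proofs of Lemma~\ref{lem: Z many isoterms}(v)--(vii) directly: one extends the substitution used there by sending $t$ to the appropriate factor of the ambient Zimin word, which exhibits $\mathbf{w}$ as mapping onto a factor of $\bz_\infty$, and this, together with the rigidity already obtained for the $x$-, $y$- and $z$-skeletons, closes the remaining cases.
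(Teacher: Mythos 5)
Your proposal follows essentially the same route as the paper's proof: you fix the $x$-skeleton and the position of $t$ by projecting onto sub-alphabets consisting of surviving $x$-letters together with $t$ that fall into the templates of Lemma~\ref{lem: Z many isoterms}(i)--(iv), and then pin down the slots of $y$ and $z$ by projecting onto $y$ (resp.\ $z$) together with the odd- or even-indexed $x$-letters and $t$, using the templates (v)--(vii), with a case split according to where the replaced factor sits (at an end, straddling an extreme or middle occurrence of $x_1$/$x_n$, or interior). The book-keeping you defer is exactly the content of the paper's Cases~1--3 (with Subcases~2.1 and~2.2), and your choice of which isoterm templates to invoke matches the ones used there, so the plan is sound and essentially identical to the published argument.
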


\begin{proof}
Let $\bw$ be a word obtained from $\bw_n$ by replacing some subword $\bp$ of length at least two by a linear letter $t$. Since the length of $\bp$ is at least two, some $x_i$ must occur in $\bp$. Suppose $\mathbb{M}(\bz_\infty)$ satisfies some nontrivial identity $\bw \approx \bw'$. Then there are three cases to be consider.

\noindent\textsc{Case~1}. The first $x_1$ of $\bw_n$ occurs in $\bp$ or the last $x_n$ of $\bw_n$ occurs in $\bp$. By symmetry, we may assume that the first $x_1$ occurs in $\bp$. Then
$\bw = t \bu$
where $\bu$ (possibly empty) is a suffix
of $\bw_n$. Since $\bu$ is a proper subword of $\bw_n$, it follows from Lemma~\ref{lem:subword isoterm} that $\bu$ is an isoterm. Therefore
\[
\bw'_{\mathsf{con}(\bu)} = \bw_{\mathsf{con}(\bu)} =\bu.
\]
Since $\bw_{\{t\}}=t$ and $t$ is an isoterm, it follows that $\bw'_{\{t\}}=t$.
If $\sh(\bu)=x_i$ and $x_i\in\simp(\bu)$ for some $i\geq 1$, then $\bw_{\{t, x_i\}}=tx_i$. It follows from Lemma~\ref{lem: Z many isoterms}(i) that $\bw_{\{t, x_i\}}$ is an isoterm. Therefore $\bw'_{\{t, x_i\}} = \bw_{\{t, x_i\}} = tx_i$, whence the occurrence of $t$ precedes $x_i$ in $\bw'$. Hence $\bw' = \bw$.

If $\sh(\bu)=x_i$ and $x_i\in\non(\bu)$ for some $i> 1$, then $\bw_{\{t, x_i, x_1\}} = tx_i x_1 x_i$. It follows from Lemma~\ref{lem: Z many isoterms}(iii) that $\bw_{\{t, x_i, x_1\}}$ is an isoterm. Therefore $\bw'_{\{t, x_i, x_1\}} = \bw_{\{t, x_i, x_1\}} = tx_i x_1 x_i$, whence the occurrence of $t$ precedes the first $x_i$ in $\bw'$. Hence $\bw' = \bw$.

If $\sh(\bu)=y$ or $\sh(\bu)=z$. By symmetry, we may assume that $\sh(\bu)=y$.
Since
\[
\bw_{\{t, y, x_1, x_3, \ldots, x_n\}} =%
\begin{cases}
t y x_i \dots y x_{n} & \text{if $x_1 \not\in \mathsf{con} (\bu)$}, \\ %
t y x_i \dots y x_{n} y x_{1} y x_3 \dots y x_{i} \dots yx_{n} & \text{if $x_1 \in \mathsf{con} (\bu)$}, \\ 
\end{cases}
\]
for appropriate odd number $i\geq 3$, it follows from Lemma~\ref{lem: Z many isoterms}(v) and (vi) that $\bw_{\{t, y, x_1, x_3, \ldots, x_n\}}$ is an isoterm. Therefore
\[
\bw'_{\{t, y, x_1, x_3, \ldots, x_n\}} = \bw_{\{t, y, x_1, x_3, \ldots, x_n\}},
\]
whence the occurrence of $t$ precedes the first $y$. Hence $\bw' = \bw$.

\noindent\textsc{Case~2}. Both the first $x_1$ and the last $x_n$ of $\bw_n$ not occur in $\bp$, and the second $x_1$ of $\bw_n$ occurs in $\bp$ or the first $x_n$ of $\bw_n$ occurs in $\bp$. By symmetry, we may assume that the second $x_1$ occurs in $\bp$. Then
\[
\bw = x_1 y x_2 z \dots x_j p^{\delta_1}  t  q^{\delta_2} x_k   \dots y x_n%
\]
for some $1\leq j \leq n$, $1< k \leq n$, $\delta_1, \delta_2 \in \{0, 1\}$ and
\[
p =%
\begin{cases}
y & \text{if $j$ is odd}, \\ %
z & \text{if $j$ is even}, \\ 
\end{cases}
\qquad
q =%
\begin{cases}
y & \text{if $k$ is odd}, \\ %
z & \text{if $k$ is even}. \\ 
\end{cases}
\]

\noindent\textsc{Subcase 2.1.}  $j \geq k$. Then $\bw_{\{t,x_k, \ldots, x_j\}} = x_k \dots x_j t x_k \dots x_j$, it follows from Lemma~\ref{lem: Z many isoterms}(ii) that $\bw_{\{t,x_k, \ldots, x_j\}}$ is an isoterm. Therefore
\begin{align} \label{xk-xj}
\bw'_{\{t,x_k, \ldots, x_j\}} =\bw_{\{t,x_k, \ldots, x_j\}} = x_k \dots x_j t x_k \dots x_j.
\end{align}
Since $\bw_{\{t, x_1, \ldots, x_{k-1}, x_k\}} = x_1 \dots x_{k-1} x_k tx_k$ (resp.~$\bw_{\{t, x_j,x_{j+1} \ldots, x_n\}} = x_j tx_j x_{j+1} \dots x_n$), it follows from Lemma~\ref{lem: Z many isoterms}(iii) that $\bw_{\{t, x_1, \ldots, x_{k-1}, x_k\}}$ (resp.~$\bw_{\{t, x_j,x_{j+1} \ldots, x_n\}}$) is an isoterm. Therefore
\begin{align}
\bw'_{\{t, x_1, \ldots, x_{k-1}, x_k\}} &= \bw_{\{t, x_1, \ldots, x_{k-1}, x_k\}} \!\!= x_1 \dots x_{k-1} x_k tx_k \label{x1-xk}\\%
\bw'_{\{t, x_j,x_{j+1} \ldots, x_n\}} &= \bw_{\{t, x_j,x_{j+1} \ldots, x_n\}} \!\! = x_j tx_j x_{j+1} \dots x_n \label{xj-xn}%
\end{align}
Now it follows from \eqref{xk-xj}, \eqref{x1-xk} and \eqref{xj-xn} that
\[
\bw'_{\{t, x_1, \ldots, x_k, \ldots, x_j, \ldots, x_n\}} = \bw_{\{t, x_1, \ldots, x_k,\ldots, x_j, \ldots, x_n\}} = x_1 \dots x_k \dots x_j t x_k \dots x_j \dots x_n.
\]
and so we have
\begin{enumerate}
\item[$(\rm a)$] $\bw' =  X_0 x_1 X_1 \dots x_j X_j t X x_k X_{k'} \dots x_n X_{n'}$ for some $X_i, X_{i'} \in \{y,z\}^{*}$.
\end{enumerate}
Since
\[
\bw_{\{t, y, x_2, x_4, \ldots, x_{n-1}\}} =%
\begin{cases}
y x_2 \dots  y x_{j-1} y^{\delta_1} t y^{\delta_2}x_{k+1} y \dots x_{n-1} y & \text{if $j$ and $k$ are odd}, \\ %
y x_2 \dots  y x_{j-1} y^{\delta_1} t x_{k} y \dots x_{n-1} y & \text{if $j$ is odd, $k$ is even}, \\ %
y x_2 \dots  y x_{j} t y^{\delta_2} x_{k+1} y \dots x_{n-1} y  & \text{if $j$ is even, $k$ is odd}, \\ %
y x_2 \dots  y x_{j} t x_{k} y \dots x_{n-1} y  & \text{if $j$ and $k$ are even}, \\ %
\end{cases}
\]
it follows from Lemma~\ref{lem: Z many isoterms}(vi) that $\bw_{\{t, y, x_2, x_4, \ldots, x_{n-1}\}}$ is an isoterm. Therefore
\[
\bw'_{\{t, y, x_2, x_4, \ldots, x_{n-1}\}} = \bw_{\{t, y, x_2, x_4, \ldots, x_{n-1}\}}
\]
and so by $(\rm a)$, we have
\begin{enumerate}
\item[$(\rm b_1)$] if $j$ and $k$ are odd, then $y$ is simple in  $X_0X_1, \ldots, X_{j-3}X_{j-2}, X_{(k+1)'}X_{(k+2)'},$ $ \ldots, X_{(n-1)'}X_{n'}$, and $y$ is simple in $X_{j-1}X_{j}$ (resp.~$XX_{k'}$) if and only if $\delta_1=1$ (resp.~$\delta_2=1$),

\item[$(\rm b_2)$] if $j$ is odd and $k$ is even, then $y$ is simple in $X_0X_1, \ldots, X_{j-3}X_{j-2}, X_{k'}X_{(k+1)'}, $ $ \ldots, X_{(n-1)'}X_{n'}$, and $y$ is simple in $X_{j-1}X_{j}$ if and only if $\delta_1=1$, $y$ does not occur in $X$,

\item[$(\rm b_3)$] if $j$ is even and $k$ is odd, then $y$ is simple in $X_0X_1, \ldots, X_{j-2}X_{j-1}, X_{(k+1)'}X_{(k+2)'},$ $ \ldots, X_{(n-1)'}X_{n'}$, and $y$ is simple in $XX_{k'}$ if and only if $\delta_2=1$, $y$ does not occur in $X_j$,

\item[$(\rm b_4)$] if $j$ and $k$ are even, then $y$ is simple in $X_0X_1, \ldots, X_{j-3}X_{j-2}, X_{k'}X_{(k+1)'} \ldots, $ $X_{(n-1)'}X_{n'}$, and $y$ does not occur in  $X_j$ and $X$.
\end{enumerate}
Since
\[
\bw_{\{t, y, x_1, x_3, \ldots, x_{n}\}} =%
\begin{cases}
x_1 y \dots  x_j y^{\delta_1} t y^{\delta_2} x_{k} \dots y x_{n} & \text{if $j$ and $k$ are odd}, \\ %
x_1 y \dots  x_j y^{\delta_1} t y x_{k+1} \dots y x_{n} & \text{if $j$ is odd, $k$ is even}, \\ %
x_1 y \dots  x_{j-1} y t y^{\delta_2} x_{k} \dots y x_{n}  & \text{if $j$ is even, $k$ is odd}, \\ %
x_1 y \dots  x_{j-1} y t  y x_{k+1} \dots y x_{n} & \text{if $j$ and $k$ are even}, \\ %
\end{cases}
\]
it follows from Lemma~\ref{lem: Z many isoterms}(vi) that $\bw_{\{t, y, x_1, x_3, \ldots, x_{n}\}}$ is an isoterm. Therefore
\[
\bw'_{\{t, y, x_1, x_3, \ldots, x_{n}\}} = \bw_{\{t, y, x_1, x_3, \ldots, x_{n}\}}
\]
and so by $(\rm a)$, we have
\begin{enumerate}
\item[$(\rm c_1)$] if $j$ and $k$ are odd, then $y$ is simple in  $X_1X_2, \ldots, X_{j-2}X_{j-1}, X_{k'}X_{(k+1)'},$ $ \ldots, X_{(n-2)'}X_{(n-1)'}$, and $y$ is simple in $X_{j}$ (resp.~$X$) if and only if $\delta_1=1$ (resp.~$\delta_2=1$), $y$ does not occur in $X_0$ and $X_n'$,

\item[$(\rm c_2)$] if $j$ is odd and $k$ is even, then $y$ is simple in $X_1X_2, \ldots, X_{j-2}X_{j-1}, XX_{k'}, $ $ \ldots, X_{(n-2)'}X_{(n-1)'}$, and $y$ is simple in $X_{j}$ if and only if $\delta_1=1$, $y$ does not occur in $X_0$ and $X_n'$,

\item[$(\rm c_3)$] if $j$ is even and $k$ is odd, then $y$ is simple in $X_1X_2, \ldots, X_{j-1}X_{j},  X_{k'}X_{(k+1)'},$ $ \ldots, X_{(n-2)'}X_{(n-1)'}$, and $y$ is simple in $X$ if and only if $\delta_2=1$, $y$ does not occur in $X_0$ and $X_n'$,

\item[$(\rm c_4)$] if $j$ and $k$ are even, then $y$ is simple in $X_1X_2, \ldots, X_{j-1}X_{j}, XX_{k'},  \ldots, $ $X_{(n-2)'}X_{(n-1)'}$, $y$ does not occur in $X_0$ and $X_n'$.
\end{enumerate}
Now it follows from $(\rm b)$ and $(\rm c)$ that
\begin{enumerate}
\item[$(\rm d_1)$] if $j$ and $k$ are odd, then $y$ is simple in  $X_1, X_3\ldots, X_{j-2}, X_{(k+1)'}, \ldots, X_{(n-1)'}$, and $y$ is simple in $X_{j}$ (resp.~$X$) if and only if $\delta_1=1$ (resp.~$\delta_2=1$), $y$ does not occur in $X_0, X_2, \ldots, X_{j-1}, X_{k'}, \ldots, X_{n'}$,

\item[$(\rm d_2)$] if $j$ is odd and $k$ is even, then $y$ is simple in $X_1, X_3, \ldots, X_{j-2}, X_{k'}, \ldots, X_{(n-1)'}$, and $y$ is simple in $X_{j}$ if and only if $\delta_1=1$, $y$ does not occur in $X_0, X_2, \ldots, X_{j-1}, $ $X, \ldots, X_{n'}$,

\item[$(\rm d_3)$] if $j$ is even and $k$ is odd, then $y$ is simple in $X_1, X_3, \ldots, X_{j-1}, X_{(k+1)'}, \ldots, $ $ X_{(n-1)'}$, and $y$ is simple in $X$ if and only if $\delta_2=1$, $y$ does not occur in $X_0, X_2, \ldots, X_{j},  X_{k'}, \ldots, X_{n'}$,

\item[$(\rm d_4)$] if $j$ is even and $k$ is odd, then $y$ is simple in $X_1, X_3, \ldots, X_{j-1}, X_{k'}, \ldots, X_{(n-1)'}$, and $y$ does not occur in $X_0, X_2, \ldots, X_{j},  X, \ldots, X_{n'}$.
\end{enumerate}
By a similar argument we can show that
\begin{enumerate}
\item[$(\rm e_1)$] if $j$ and $k$ are odd, then $z$ is simple in $X_0, X_2, \ldots, X_{j-1}, X_{k'}, \ldots, X_{n'}$,  and $z$ does not occur in $X_1, X_3\ldots, X_{j-2}, X_{(k+1)'}, \ldots, X_{(n-1)'}$,

\item[$(\rm e_2)$] if $j$ is odd and $k$ is even, then $z$ is simple in $X_0, X_2, \ldots, X_{j-1}, X, \ldots, X_{n'}$ and $z$ is simple in $X_j$ if and only if $\delta_1=1$, $z$ does not occur in $X_1, X_3, \ldots, $ $ X_{j-2}, X_{k'}, \ldots, X_{(n-1)'}$,

\item[$(\rm e_3)$] if $j$ is even and $k$ is odd, then $z$ is simple in $X_0, X_2, \ldots, X_{j},  X_{k'}, \ldots, X_{n'}$, and $z$ is simple in $X$ if and only if $\delta_2=1$, $z$ does not occur in $X_1, X_3, \ldots, $ $X_{j-1}, X_{(k+1)'}, \ldots, X_{(n-1)'}$,

\item[$(\rm e_4)$] if $j$ is even and $k$ is odd, then $z$ is simple in  $X_0, X_2, \ldots, X_{j},  X, \ldots, X_{n'}$, and $z$ is simple in $X_{j}$ (resp.~$X$) if and only if $\delta_1=1$ (resp.~$\delta_2=1$), $z$ does not occur in $X_1, X_3, \ldots, X_{j-1}, X_{k'}, \ldots, X_{(n-1)'}$.
\end{enumerate}
Therefore it follows from $(\rm a), (\rm d), (\rm e)$ that $\bw'=\bw$.

\noindent\textsc{Subcase 2.2.}  $j < k$. Then $\bw_{\{t, x_1, \ldots, x_j, x_k, \ldots, x_n\}} = x_1 \dots x_j t x_k \dots x_n$. It follows from Lemma~\ref{lem: Z many isoterms}(i) that $\bw_{\{t, x_1, \ldots, x_j, x_k, \ldots, x_n\}}$ is an isoterm. Therefore
\[
\bw'_{\{t, x_1, \ldots, x_j, x_k, \ldots, x_n\}} = \bw_{\{t, x_1, \ldots, x_j, x_k, \ldots, x_n\}} = x_1 \dots x_j t x_k \dots x_n.
\]
Now repeat the same argument with Case 2.1 except replacing Lemma~\ref{lem: Z many isoterms}(vi) by Lemma~\ref{lem: Z many isoterms}(v), we can show that $\bw'=\bw$.

\noindent\textsc{Case~3}. $x_i$ in $\bp$ for some $1<i<n$ and $x_1, x_n $ not in $\bp$. Then either the first $x_i$ occurs in $\bp$ or the second $x_i$ occurs in $\bp$. By symmetry, we may assume that the first $x_i$ occurs in $\bp$. Without loss of generality, we may assume that
\[
\bw = x_1 y x_2 z \dots x_j p^{\delta_1}  t  q^{\delta_2} x_k \dots z x_n y x_1 z \dots x_j \dots x_i \dots x_k \dots x_{n-1} y x_n %
\]
for some $1< j\leq i \leq k  \leq n$, $j \ne k$,  $\delta_1, \delta_2 \in \{0, 1\}$ and
\[
p =%
\begin{cases}
y & \text{if $j$ is odd}, \\ %
z & \text{if $j$ is even}, \\ 
\end{cases}
\qquad
q =%
\begin{cases}
z & \text{if $k$ is odd}, \\ %
y & \text{if $k$ is even}. \\ 
\end{cases}
\]
Since $\bw_{\{t, x_1, \ldots, x_j\}} = (x_1 \dots x_j) t  (x_1 \dots x_j)$ (resp.~$\bw_{\{x_{j+1}, \ldots, x_n\}} = (x_k \dots x_n) x_{j+1}$ $ \dots x_{k-1} (x_k \dots x_n)$),
it follows from Lemma~\ref{lem: Z many isoterms}(ii) that $\bw_{\{t, x_1, \ldots, x_j\}}$ (resp.~$\bw_{\{x_{j+1}, \ldots, x_n\}}$) is an isoterm. Therefore
\begin{align}
\bw'_{\{t, x_1, \ldots, x_j\}} &= \bw_{\{t, x_1, \ldots, x_j\}} = (x_1 \dots x_j) t  (x_1 \dots x_j)  \label{t x1-xj}\\%
\bw'_{\{x_{j+1}, \ldots, x_n\}} &= \bw_{\{x_{j+1}, \ldots, x_n\}} = (x_k \dots x_n) x_{j+1} \dots x_{k-1} (x_k \dots x_n). \label{xi xk-xn}  %
\end{align}
Since $\bw_{\{t, x_1, x_{j+1}, x_n\}} = x_1 t x_n x_1 x_{j+1}x_n$, it follows from Lemma~\ref{lem: Z many isoterms}(iv) that $\bw_{\{t, x_1, x_{j+1}, x_n\}}$ is an isoterm. Therefore
\begin{align}\label{x1xn}
\bw'_{\{t, x_1, x_{j+1}, x_n\}} = \bw_{\{t, x_1, x_{j+1}, x_n\}} = x_1 t x_n x_1 x_{j+1}x_n,
\end{align}
that is, $t$ precedes $x_{j+1}$ in $\bw'$ and the first $x_n$ precedes the second $x_1$ in $\bw'$.
Now it follows from \eqref{t x1-xj}, \eqref{xi xk-xn} and \eqref{x1xn} that
\[
\bw'_{\{t, x_1, \ldots, x_n\}} = (x_1 \dots x_j) t (x_k \dots x_n) (x_1 \dots x_j)  x_{j+1} \dots x_{k-1} (x_k \dots x_n), %
\]
and so
\begin{enumerate}
\item[$(\rm a)$] $\bw' =  X_0 x_1 X_1 \dots x_j X_j t X x_k X_{k} \dots x_n X_{n} x_1 X_{1'} \dots x_n X_{n'}$ for some $X_i, X_{i'} \in \{y,z\}^{*}$.
\end{enumerate}
Now repeat the same argument with Case 2.1 except replacing Lemma~\ref{lem: Z many isoterms}(vi) by Lemma~\ref{lem: Z many isoterms}(vii), we can show that $\bw'=\bw$.
\end{proof}

\begin{thm}\label{thm: B to Z non-F}
The variety $\mathbb{M}(\bz_\infty)$ cannot be defined within $\mathbb{B}_2^1$ by any finite system of identities.
\end{thm}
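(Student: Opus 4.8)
The plan is a rigidity-plus-compactness argument resting on the isoterm Lemmas~\ref{lem:subword isoterm} and~\ref{lem:map isoterm}. Suppose, for contradiction, that $\mathbb{M}(\bz_\infty)=\mathbb{B}_2^1\wedge[\![\Sigma]\!]$ for some finite set $\Sigma$ of identities; then every member of $\Sigma$ holds in $\mathbb{M}(\bz_\infty)$, and $\Eq(\mathbb{M}(\bz_\infty))$ is the deductive closure of $\Eq(\mathbb{B}_2^1)\cup\Sigma$. Let $N$ exceed the length of every word occurring in $\Sigma$, and fix an odd $n\geq 3$ with $|\bw_n|=4n-1>N$. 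By Lemma~\ref{lem: Z sat}, $\bw_n\approx\bw'_n$ holds in $\mathbb{M}(\bz_\infty)$, so there is a finite deduction $\bw_n=\bu_1,\bu_2,\dots,\bu_m=\bw'_n$ in which each $\bu_{i+1}$ arises from $\bu_i$ by rewriting one factor: $\bu_i=\bc\,\theta(\bp)\,\bd$ and $\bu_{i+1}=\bc\,\theta(\bq)\,\bd$ for some identity $\bp\approx\bq$ in $\Eq(\mathbb{B}_2^1)\cup\Sigma$, a substitution $\theta$ of the free monoid, and possibly empty words $\bc,\bd$. Since $\bw_n\neq\bw'_n$ (their second letters are $y$ and $z$), it suffices to show by induction that $\bu_i=\bw_n$ for every $i$.

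So assume $\bu_i=\bw_n$. If the rewriting uses an identity of $\Eq(\mathbb{B}_2^1)$, then $\bu_{i+1}$ is $\mathbb{B}_2^1$-equivalent to $\bw_n$, and Lemma~\ref{lem: B21 isoterm} (which says $\bw_n$ is an isoterm for ${\bf B}_2^1$) forces $\bu_{i+1}=\bw_n$. Otherwise $\bp\approx\bq$ lies in $\Sigma$; we may assume $\bp\neq\bq$. Since $\mathbb{M}(\bz_\infty)\subseteq\mathbb{B}_2^1$ and ${\bf B}_2^1$ preserves content and the set of simple letters within each of its identities (via the one-variable substitutions into ${\bf B}_2^1$ used in Lemma~\ref{lem: B21 xyxy}), we have $\con(\bp)=\con(\bq)$ and $\simp(\bp)=\simp(\bq)$. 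The key combinatorial fact about $\bw_n$ is that every factor of length at least two occurs exactly once in it; in particular, if a variable of $\bp$ has two or more occurrences then its $\theta$-image has length at most one.

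We split into two cases. If $\theta(x)$ has length at most one for every $x\in\con(\bp)$, then $|\theta(\bp)|\leq|\bp|<N<|\bw_n|$, so $\theta(\bp)$ is a \emph{proper} factor of $\bw_n$, hence an isoterm for $\mathbb{M}(\bz_\infty)$ by Lemma~\ref{lem:subword isoterm}; as $\theta(\bp)\approx\theta(\bq)$ holds in $\mathbb{M}(\bz_\infty)$ this gives $\theta(\bq)=\theta(\bp)$ and $\bu_{i+1}=\bw_n$. If instead some $\theta(x)$ has length at least two, then $x$ occurs exactly once in $\bp$, hence (as $\simp(\bp)=\simp(\bq)$) exactly once in $\bq$; write $\bp=\bp_1x\bp_2$ and $\bq=\bq_1x\bq_2$, so $\bw_n=\bc\,\theta(\bp_1)\,\theta(x)\,\theta(\bp_2)\,\bd$ with $\theta(x)$ a factor of length at least two. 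Choose a letter $t\notin\con(\bw_n)$ and let $\theta'$ agree with $\theta$ except that $\theta'(x)=t$; then $\bc\,\theta'(\bp)\,\bd$ is obtained from $\bw_n$ by replacing that occurrence of $\theta(x)$ with the linear letter $t$, so it is an isoterm for $\mathbb{M}(\bz_\infty)$ by Lemma~\ref{lem:map isoterm}. Since $\bp\approx\bq$ holds in $\mathbb{M}(\bz_\infty)$, so does $\bc\,\theta'(\bp)\,\bd\approx\bc\,\theta'(\bq)\,\bd$, whence $\theta(\bp_1)\,t\,\theta(\bp_2)=\theta(\bq_1)\,t\,\theta(\bq_2)$; as $t$ occurs exactly once on each side, $\theta(\bp_1)=\theta(\bq_1)$ and $\theta(\bp_2)=\theta(\bq_2)$, so $\theta(\bp)=\theta(\bq)$ and again $\bu_{i+1}=\bw_n$. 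This completes the induction and yields the contradiction $\bw_n=\bw'_n$. The substance of the proof is carried entirely by Lemmas~\ref{lem:subword isoterm} and~\ref{lem:map isoterm}; what remains is only the bookkeeping above — in particular choosing $n$ large relative to $\Sigma$ and handling the empty-word substitutions permitted in the monoid setting.
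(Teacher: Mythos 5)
Your argument is essentially the paper's own proof, carried out in more detail: choose $n$ large relative to the finite basis, note $\bw_n\approx\bw'_n$ holds in $\mathbb{M}(\bz_\infty)$ (Lemma~\ref{lem: Z sat}), take a deduction from $\Eq(\mathbb{B}_2^1)\cup\Sigma$, dispose of the $\Eq(\mathbb{B}_2^1)$ steps via Lemma~\ref{lem: B21 isoterm}, and dispose of the $\Sigma$ steps via Lemma~\ref{lem:subword isoterm} (image a proper factor) or Lemma~\ref{lem:map isoterm} (some letter's image has length at least two); your fresh-letter substitution $\theta'$ makes explicit a point the published proof leaves implicit, which is a genuine improvement in rigour.

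One step is wrongly justified, though its conclusion is true. You derive $\con(\bp)=\con(\bq)$ and $\simp(\bp)=\simp(\bq)$ from the claim that ${\bf B}_2^1$ preserves content and simple letters ``within each of its identities,'' invoking $\mathbb{M}(\bz_\infty)\subseteq\mathbb{B}_2^1$. But an identity $\bp\approx\bq\in\Sigma$ need not hold in ${\bf B}_2^1$ at all: if every member of $\Sigma$ held there, then $\mathbb{B}_2^1\wedge[\![\Sigma]\!]=\mathbb{B}_2^1\neq\mathbb{M}(\bz_\infty)$. The inclusion $\mathbb{M}(\bz_\infty)\subseteq\mathbb{B}_2^1$ gives $\Eq(\mathbb{B}_2^1)\subseteq\Eq(\mathbb{M}(\bz_\infty))$, which is the wrong direction for your purpose. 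The repair is immediate: $\bp\approx\bq$ does hold in $\mathbb{M}(\bz_\infty)$, and identities of $M(\bz_\infty)$ preserve content and the simple/non-simple partition (send the letter in question to $x_0$ and all other letters to $1$; a side omitting that letter evaluates to $1$, a side in which it is simple evaluates to $x_0$, and a side in which it is repeated evaluates to $0$). Alternatively, you can avoid the claim altogether by choosing $t$ fresh with respect to $\con(\bw_n)\cup\con(\theta(\bq))$: then the equality $\theta'(\bq)=\theta'(\bp)$, whose right-hand side contains exactly one $t$, already forces $x$ to occur exactly once in $\bq$ and the splitting argument proceeds as you wrote it.
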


\begin{proof}
Assume, for contradiction, that some finite system of identities defines  $\mathbb{M}(\bz_\infty)$ within $\mathbb{B}_2^1$, and let $m$ be the maximum number of variables appearing in any word in this basis.  Now let $n$ be larger than $m$ and consider a deduction of $\bw_n=\bw'_n$.  Now, the first nontautological step of such a deduction involves a substitution from some $m$-letter word into $\bw_n$.  By Lemma~\ref{lem:subword isoterm}, it must map onto all of $\bw_n$.  But as the number of variables is at most $m<n$, there exists some letter $z$ such that the factor $z \varphi$ contains at least two distinct letters, which is the case as described in Lemma~\ref{lem:map isoterm}.  Hence in either case, the image of this word is an isoterm for $M(\bz_\infty)$, contradicting the fact that this was a nontautological step of the deduction.
\end{proof}

\end{document}